\title[Rational homotopy of spaces of long embeddings]{Graph-complexes computing the rational homotopy of high dimensional analogues of spaces of long knots}
\author{Gregory Arone}
\address{Department of Mathematics \\ University of Virginia \\ Charlottesville, VA 22904 \\ USA}
\email{zga2m@virginia.edu}
\thanks{Both authors gratefully acknowledge NSF support via collaborative grant DMS 0967649 and via Midwest Topology Network grant DMS 0844249.}
\author{Victor Turchin}
\address{Department of Mathematics\\ Kansas State University \\ Manhattan, KS 66506 \\ USA}
\email{turchin@ksu.edu}
\subjclass[2010]{Primary: 57R40, 57R42; Secondary: 55P48, 55P62, 18D50}
\keywords{Spaces of embeddings, little discs operad, rational homotopy, graph-complexes}
\date{}                                           % Activate to display a given date or no date
\newcommand{\calC}{\mathcal{C}}
\newcommand{\calE}{\mathcal{E}}
\newcommand{\calO}{\mathcal{O}}
\newcommand{\calK}{\mathcal{K}}
\newcommand{\calP}{\mathcal{P}}
\newcommand{\balls}{\mathtt{B}}
\newcommand{\R}{{\mathbb R}}
\newcommand{\Q}{{\mathbb{Q}}}
\newcommand{\ext}{\operatorname{Ext}}
\newcommand{\CR}{\operatorname{cr}}
\newcommand{\holim}{\operatorname{holim}\,}
\newcommand{\Emb}{\operatorname{Emb}}
\newcommand{\Ebar}{\overline{\Emb}}
\newcommand{\Top}{\operatorname{Top}}
\newcommand{\hTop}{\operatorname{hTop}}
\newcommand{\Epi}{\Omega}
\newcommand{\hRmod}{\operatorname{hRmod}}
\newcommand{\chains}{\mathtt{C}_*}
\newcommand{\HH}{{\operatorname{H}}}
\newcommand{\HHHH}{\mathrm{HH}}
\newcommand{\hatH}{\hat{\HH}_*(\mathrm{C}(\bullet,\R^n),\Q)}
\newcommand{\hatP}{\Q\otimes\hat\pi_*(\mathrm{C}(\bullet,\R^n))}
\newcommand{\unit}{{1\!\!1}}
\newcommand{\id}{{\mathrm{id}}}
\newcommand{\sign}{{\mathrm{sign}}}
\newcommand{\calEp}{{\mathcal E}^{m,n}_\pi}
\newcommand{\calEH}{{\mathcal E}^{m,n}_{\HH}}
\newcommand{\Ebarmn}{{\overline{\mathrm{Emb}}}_c(\R^m,\R^n)}
\newcommand{\Embmn}{\mathrm{Emb}_c(\R^m,\R^n)}
\newcommand{\Imn}{\mathrm{Imm}_c(\R^m,\R^n)}
\newcommand{\rank}{\operatorname{rank}}
\newcommand{\Rmod}{\operatorname{Rmod}}
\newcommand{\BN}{{\mathbb N}}
\newcommand{\BR}{{\mathbb R}}
\newcommand{\BZ}{{\mathbb Z}}
\newcommand{\BQ}{{\mathbb Q}}
\newcommand{\BS}{{\mathbb S}}
\newcommand{\Assoc}{{\mathcal A}ssoc}
\newcommand{\Poiss}{{\mathcal P}oiss}
\newcommand{\Lie}{{\mathcal L}ie}
\newcommand{\coLie}{co{\mathcal L}ie}
\newcommand{\Comm}{{\mathcal C}om}
\def\tr{{\rm tr}\,}
\newcommand\rth{\refstepcounter{equation}}
\newcommand\numb{\rth{\rm \theequation}}
\numberwithin{equation}{section}
\theoremstyle{plain}
\newtheorem{theorem}{Theorem}[section]
\newtheorem{proposition}[theorem]{Proposition}
\newtheorem{lemma}[theorem]{Lemma}
\newtheorem{corollary}[theorem]{Corollary}
\newtheorem{conjecture}[theorem]{Conjecture}
\theoremstyle{definition}
\theoremstyle{remark}
\newtheorem{remark}[theorem]{Remark}
\begin{document}

\maketitle

\sloppy

\begin{abstract}
We continue our investigation of spaces of long embeddings (long embeddings are high-dimensional analogues of long knots). In previous work we showed that when the dimensions are in the stable range, the rational {\it homology} groups of these spaces can be calculated as the homology of a direct sum of certain finite graph-complexes, which we described explicitly. In this paper, we establish a similar result for the rational {\it homotopy} groups of these spaces. We also put emphasis on different ways how the calculations can be done.  In particular we describe three different graph-complexes computing these  rational homotopy groups. We also compute the generating functions of the Euler characteristics of the summands in the homological  splitting.

\end{abstract}

\setcounter{tocdepth}{1}

\tableofcontents

\section{Introduction}\label{s:introduction}
\subsection{Overview}\label{ss:overview}
In this paper we continue the study of spaces of high-dimensional long knots that we began in~\cite{AT}. Let $\Embmn$ be the space of smooth embeddings $\R^m\hookrightarrow\R^n$ that coincide with a fixed linear embedding $i\colon\R^m\hookrightarrow\R^n$ outside a compact subset of $\R^m$. For $m=1$ this space is usually called the {\it space of long knots}, and for a general $m$ we will be calling it the {\it space of long embeddings}. Let $\Imn$ be the analogous space of immersions $\R^m\looparrowright\R^n$ with the same behavior at infinity.  The space $\Embmn$ is an open  subset of $\Imn$; in case $n\geq 2m+1$, this subset is dense. Denote by $\Ebarmn$ the homotopy fiber of the inclusion $\Embmn\hookrightarrow\Imn$ over the fixed linear embedding~$i$. The subject of this paper is the rational homotopy type of $\Embmn$ and $\Ebarmn$, when $n\geq 2m+2$.

%The restriction $n\geq 2m+2$ guarantees that all the spaces involved are connected, moreover they are $H$-spaces, which means that their rational homotopy type is completely determined by the ranks of the rational homotopy groups. In particular the rational cohomology algebra is a free polynomial algebra generated by the rational homotopy (or rather its dual vector space).
The case $m=1$ (which is the case of usual long knots) has been studied extensively,  see~\cite{ALTV,LT,LTV,Turchin}. Our goal is to achieve the same level of understanding of the rational homology and homotopy of these spaces in cases $m>1$. Our methods do apply to the case $m=1$, but in this case all our results are known.

Our point of departure is Theorem 0.2 of~\cite{AT} (first part of Theorem~\ref{t:hom_as_rmod} below). This theorem describes the rational homology groups $\HH_*(\Ebarmn,\Q)$ as the homology of a \lq\lq space of derived maps" of certain right $\Omega$-modules (right $\Omega$-modules are the same thing as right modules over the commutative operad without unit). In Subsection~\ref{ss:about_theorem} we briefly explain how  \cite[Theorem~0.2]{AT} was obtained. Using this theorem in Sections~\ref{s:graphs} and~\ref{s:koszul} we define two explicit complexes  computing the homology  $\HH_*(\Ebarmn,\Q)$. The complex from Section~\ref{s:graphs} is obtained by taking a fibrant replacement (in the injective model structure) for the target  $\Omega$-module and the  one from Section~\ref{s:koszul} is obtained by taking a cofibrant replacement (in the projective model structure) of the source $\Omega$-module. Actually the latter complex appeared already in~\cite{AT}. It is given here for completeness of exposition and also because it is used in computations of the Euler characteristics of the double splitting, see Section~\ref{s:Euler}. We also compare this latter complex with a certain deformation complex of a morphism of operads. In Section~\ref{s:graphs} we obtain that the rational homotopy $\Q\otimes\pi_*(\Ebarmn)$ can also be described as the homology of a space of derived maps between  certain $\Omega$-modules. This result is equivalent to saying that the spectral sequence computing $\Q\otimes\pi_*(\Ebarmn)$ and associated with the Goodwillie-Weiss tower collapses at the second term. Similarly we obtain two different complexes computing  $\Q\otimes\pi_*(\Ebarmn)$, see Section~\ref{s:graphs} and~\ref{s:koszul}. It is quite interesting that all the obtained complexes computing the rational homology and homotopy of $\Ebarmn$ look very similar to the graph-complexes arising in the Bott-Taubes integration for the space of long knots and their higher dimensional analogues~\cite{Catt,CattRossi,Sakai,SakW,Wat}. In the paper we also determine how the rational homotopy type of $\Embmn$ is related to that of $\Ebarmn$, see Section~\ref{s:emb}.

As it follows from \cite[Theorem~0.2]{AT} the rational homology $\HH_*(\Ebarmn,\Q)$, $n\geq 2m+2$, has a natural double splitting and the terms of the splitting depend only on the parities of $m$ and $n$ (but the homological degree in which these terms appear do depend on $m$ and $n$). In other words up to a certain regrading this homology is the same for spaces $\Ebarmn$ with $m$ and $n$ of the same parities. In particular if we know this splitting in the homology of $\overline{\mathrm{Emb}}_c(\R^1,\R^n)$, we can determine the homology of any space $\Ebarmn$ with $m$ odd. This {\it biperiodicity} is quite surprising. Notice it does not hold for the initial spaces of long embeddings  $\Embmn$ since it is false for $\Imn$. In Section~\ref{s:Euler} we produce generating functions of the Euler characteristics of this splitting. We mention that in the case $m=1$ this splitting was earlier considered by the second author in~\cite{Turchin}. Our computations are completely analogous. In Appendix we present results of computer calculations of the Euler characteristics of the splitting in small degrees both for the homology and homotopy.

\subsection{Rational homology and homotopy of $\Ebarmn$ in terms of maps between $\Omega$-modules}\label{ss:hom_as_omega_maps}
As we mentioned in the previous subsection, our starting point for this paper is \cite[Theorem~0.2]{AT}. In order to formulate this result we need to evoke some terminology. Let $\Omega$ be the category of finite (possibly empty) sets with morphisms - surjective mappings. Let $\Gamma$ be the category of finite pointed sets with morphisms - maps preserving the based point. For a small category $\calC$ we will call a right $\calC$-module a contravariant functor whose source category is $\calC$. Similarly a left $\calC$-module is a covariant functor with source  $\calC$.  The target category usually is going to be the category of vector spaces over~$\Q$ or the category of non-negatively graded chain complexes over~$\Q$. The category of right $\calC$-modules in $\Q$-vector spaces will be denoted $\mathrm{mod}{-}\calC$. In~\cite{PirashviliDold} Pirashvili constructs a functor
$$
\mathrm{cr}\colon \mathrm{mod}{-}\Gamma\,\longrightarrow\,\mathrm{mod}{-}\Omega
\eqno(\numb)\label{eq:cross_effect}
$$
which defines an equivalence of categories. The functor $\mathrm{cr}$ is defined in the following way. Let $F$ be a right $\Gamma$-module. Let $k$ denote the set $\{1,2,\ldots,k\}$ and $k_+$ denote the set $\{0,1,2,\ldots,k\}$, pointed at~0. The component $\mathrm{cr}F(k)$ is the quotient of $F(k_+)$ by the images of the maps
$$
\alpha_i^*\colon F\left((k-1)_+\right)\to F(k_+)
$$
induced by the maps $\alpha_i\colon k_+\to (k-1)_+$, $i=1\ldots k$, defined as
$$
\alpha_i(j)=
\begin{cases}
j,& j<i;\\
0,& j=i;\\
j-1,& j>i.
\end{cases}
$$
The $\Omega$-module $\mathrm{cr}F$ is called {\it cross-effect} of $F$.

For any $m\geq 1$ we can consider a contravariant functor
$$
S^{m\bullet}\colon\Omega\to\Top
$$
that sends a set $k$ to the sphere $S^{mk}$. On morphisms this functor is defined by means of the diagonal maps (here $S^{mk}$ is viewed as a one-point compactification of $\R^{mk}=\underbrace{\R^m\times\ldots\times\R^m}_{k}$). Composing with the reduced homology functor produces the graded right $\Omega$-module  $\widetilde{\HH}_*(S^{m\bullet},\Q)$. It is easy to see that this $\Omega$-module is the cross-effect of the $\Gamma$-module $\HH_*((S^m)^\bullet,\Q)$ that assigns to a pointed set $k_+$ the homology of the space $(S^m)^k$ of pointed maps $k_+\to S^m$.

It is not hard to see that for $n\geq 2$ the assignment
$$
k_+\rightsquigarrow \mathrm{Emb}_*(k_+,S^n),
$$
where $\mathrm{Emb}_*(k_+,S^n)$ is the space of pointed embeddings $k_+\hookrightarrow S^n$, defines a contravariant functor
$$
\mathrm{Emb}_*(\bullet,S^n)\colon\Gamma\to\hTop
$$
where $\hTop$ is the {\em homotopy} category of topological spaces (whose morphisms are homotopy classes of maps). Notice that $\mathrm{Emb}_*(k_+,S^n)$ is homeomorphic to the configuration space $\mathrm{C}(k,\R^n)$ of $k$ labeled points in $\R^n$. Composing with the homology functor one gets a right $\Gamma$-module (again with trivial differential) $\HH_*(\mathrm{Emb}_*(\bullet,S^n),\Q)$. In case $n\geq 3$ these configuration spaces are simply connected which allows us to define a $\Gamma$-module $\Q\otimes \pi_*(\mathrm{Emb}_*(\bullet,S^n))$. The cross-effect of these two functors will be denoted by $\hatH$ and $\hatP$ respectively.
The corresponding groups are sometimes called the normalized (rational) homology and homotopy of configuration spaces.

The differential non-negatively graded right $\Omega$-modules form a  category $\mathrm{Ch}_{\geq 0}\left(\mathrm{mod}{-}\Omega\right)$ with a natural {\it projective} model structure enriched over chain complexes. In this model structure weak equivalences are quasi-isomorphisms, fibrations are surjective maps in all strictly positive degrees, cofibrations are inclusions with degree-wise projective cokernels. For a pair of $\Omega$-modules $F$ and $G$, we will denote by $\underset{\Omega}{\hRmod}(F,G)$ the corresponding derived $\mathrm{hom}$ object, which is a chain complex.\footnote{One could choose instead to work with the category of unbounded complexes of $\Omega$-modules. The theorem below would still be true. But it just takes a little bit more work to define a model structure.}

\begin{theorem}\label{t:hom_as_rmod}
For $n\geq 2m+2$, one has natural isomorphisms
$$
(i)\quad \HH_*(\Ebarmn,\Q)\simeq \HH\left(\underset{\Omega}{\hRmod}\left(\widetilde{\HH}_*(S^{m\bullet},\Q),\hatH\right)\right);
\eqno(\numb)\label{eq:homol_as_rmod}
$$
$$
(ii)\quad \Q\otimes \pi_*(\Ebarmn)\simeq \HH\left(\underset{\Omega}{\hRmod}\left(\widetilde{\HH}_*(S^{m\bullet},\Q),\hatP\right)\right).
\eqno(\numb)\label{eq:homot_as_rmod}
$$
\end{theorem}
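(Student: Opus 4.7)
Part (i) is simply a restatement of Theorem~0.2 of [AT] after translating across Pirashvili's equivalence (\ref{eq:cross_effect}) between right $\Gamma$-modules and right $\Omega$-modules. The new content is part (ii), and the plan is to run the Goodwillie--Weiss embedding calculus machinery from [AT] with rational homotopy groups in place of rational homology groups, then to collapse the resulting spectral sequence by formality.

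The first step is convergence. In the range $n\ge 2m+2$, the Goodwillie--Klein--Weiss theorem says that
$$
\Ebarmn\;\longrightarrow\;\holim_k \ET_k\Ebarmn
$$
is a weak equivalence, with connectivity of $\Ebarmn\to \ET_k\Ebarmn$ tending to infinity with $k$. This yields strong convergence of the Bousfield--Kan spectral sequence of the tower to $\pi_*(\Ebarmn)$. The second step is identifying the $E_2$ page. In [AT] the tower was recast, using Pirashvili's equivalence together with the Sinha/Weiss description of the layers in terms of configuration spaces $\Config{k}{\R^n}$, as a derived hom of right $\Omega$-modules whose target is $\hatH$. I would run the identical reassembly, but with rational homology of configuration spaces replaced by their rational homotopy (which assembles into a right $\Gamma$-module since $\Config{k}{\R^n}$ is simply connected for $n\ge 3$). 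This produces
$$
E_2 \;\cong\; \HH\bigl(\underset{\Omega}{\hRmod}(\widetilde{\HH}_*(S^{m\bullet},\Q),\hatP)\bigr),
$$
which is the right-hand side of (\ref{eq:homot_as_rmod}).

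The main obstacle is the collapsing of this spectral sequence at the $E_2$ page. I would deduce this from Kontsevich's formality of the little $n$-disks operad $\calE_n$, together with formality of the relevant bimodule structure. Formality implies that the operadic bimodule governing the layers of the Goodwillie--Weiss tower is rationally quasi-isomorphic to its homology as a bimodule, so the differentials of the homotopy spectral sequence are determined, via the same formality equivalences, by those of the homology spectral sequence. Since the homology spectral sequence collapses rationally at $E_2$ by part (i), so does the homotopy one. Combining this with the strong convergence from the first step gives the natural isomorphism (\ref{eq:homot_as_rmod}).

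I expect the hard point to be the transfer of collapsing from homology to homotopy. Individually, formality of $\calE_n$ implies that each configuration space $\Config{k}{\R^n}$ is formal, so $\hatP$ can be computed from $\hatH$; but to conclude that every higher differential vanishes on the homotopy $E_2$ page, one needs the compatibility of these quasi-isomorphisms with the full $\Omega$-module structure (equivalently, with the action of the $\calE_m$--$\calE_n$ bimodule structure controlling the Goodwillie--Weiss tower). Once that bimodule-level formality is in place, the rest of the argument is a direct adaptation of [AT].
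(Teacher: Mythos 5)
Your outline of part (ii) diverges sharply from the paper's argument, and the place where it diverges is exactly the place where you yourself flag a gap --- and that gap is real, not cosmetic. You propose to run the Bousfield--Kan spectral sequence of the Goodwillie--Weiss tower on rational homotopy groups and then deduce collapse at $E_2$ from operadic formality of the little discs. But the Lambrechts--Voli\'c formality result lives at the level of \emph{chains}: it says $\chains^\R(\balls_m)\to\chains^\R(\balls_n)$ is a formal morphism of operads, which is what lets one replace $\chains(\mathrm{Emb}^{st}_c(\bullet,\R^n))$ by its homology and compute the holim over $\widetilde{\calO}^{st}_\infty(\R^m)$. Nothing in that package tells you that the corresponding \emph{homotopy-group} spectral sequence has vanishing higher differentials; to make your plan work you would need a genuinely new statement asserting that the relevant $\balls_m$--$\balls_n$ infinitesimal bimodule is formal in a sense controlling rational \emph{homotopy}, and no such statement is cited or sketched. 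So the proposal, as written, reduces (ii) to an unproven formality claim that is of comparable difficulty to the result itself.

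The paper proves (ii) by a route that never touches a homotopy spectral sequence. From part (i) together with the explicit injective resolution ${\hat D_n}{}^\bullet$ of $\hat\HH_*(\balls_n(\bullet),\Q)$ (Propositions~\ref{p:operad_inclusion} and~\ref{p:injective}), one identifies $\HH_*(\Ebarmn,\Q)$ with the homology of the graph-complex $\calEH$ (Theorems~\ref{t:HE=RH} and~\ref{t:HE_Ext}). The key structural observation is then twofold: $\calEH$ is a polynomial bialgebra whose primitives form the subcomplex $\calEp$ of connected graphs; and, since $\Ebarmn$ is a connected $(m+1)$-fold loop space for $n\ge 2m+2$, its rational homology is a polynomial bialgebra generated by its rational homotopy. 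Matching generators on both sides gives $\HH(\calEp)\simeq\Q\otimes\pi_*(\Ebarmn)$, and the parallel injective-resolution argument with ${\hat P_n}{}^\bullet$ (Proposition~\ref{p:primitive_zigzag}, Lemma~\ref{l:gr_omega_isom}) identifies $\calEp$ with $\underset{\Omega}{\hRmod}(\widetilde\HH_*(S^{m\bullet},\Q),\hatP)$, yielding (ii). In short: the paper extracts the homotopy statement from the homology statement via the loop-space bialgebra structure and a graph-complex model, rather than by trying to collapse a homotopy spectral sequence. You should replace the spectral-sequence-collapsing step by this primitives-of-a-loop-space argument, or else supply a genuine proof of the bimodule formality you invoke.
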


The first part of this theorem is the main result \cite[Theorem~0.2]{AT} of our previous paper. The second part of this theorem is proved in Section~\ref{s:graphs}. It is  a consequence of Theorems~\ref{t:HE=RH}-\ref{t:HE_Ext}.

Notice that the source and the target objects in~\eqref{eq:homol_as_rmod} and~\eqref{eq:homot_as_rmod} have trivial differential. This means that the right-hand sides are just products of certain $\ext$ groups in the abelian category $\mathrm{mod}{-}\Omega$ of  $\Omega$-modules in $\Q$-vector spaces.

Another way to understand the theorem above is that it describes the rational homology and homotopy of $\Ebarmn$ in terms of the higher order Hochschild homology defined by Pirashvili~\cite{PirashviliDold}.

\subsection{Theorem~\ref{t:hom_as_rmod} (i)}\label{ss:about_theorem}
To make the presentation self-contained we present below the main ideas behind the proof of Theorem~\ref{t:hom_as_rmod}~(i) (a.k.a. \cite[Theorem~0.2]{AT}). In short this result is a combination of the Goodwillie-Weiss manifold calculus of functors and a result about the relative formality of the little discs operads~\cite[Theorem~1.4]{LV}. By {\it standard discs} in $\R^m$ we will understand discs obtained from the unit disc by translations and rescaling. Let $\widetilde{\calO}_\infty^{st}(\R^m)$ denote the subcategory of open subsets of $\R^m$ which are finite unions of open {\it standard discs} union a complement of a closed standard disc (we call it the {\it anti-disc}). All the discs and the anti-disc are supposed to be disjoint.  We will view $\R^m$ as a subset of $\R^n$ via inclusion $i$. By a {\it standard embedding}  of a subset $X$ of $\R^m$ in $\R^n$ we will understand an embedding which is a composition of the inclusion, translation, and rescaling on each connected component of $X$. Consider the cofunctor
$$
\mathrm{Emb}^{st}_c(\bullet,\R^n)\colon\widetilde{\calO}_\infty^{st}(\R^m)\to\Top,
$$
that assigns to an open set $U$ the space of standard embeddings $U\hookrightarrow \R^n$. It follows from the Goodwillie-Weiss manifold calculus of functors~\cite{Weiss:HomologyEmb}, that in the range $n\geq 2m+2$ one has a quasi-isomorphism
$$
\chains(\Ebarmn)\simeq\underset{\widetilde{\calO}_\infty^{st}(\R^m)}{\holim}\chains(\mathrm{Emb}^{st}_c(\bullet,\R^n)),
$$
where $\chains(-)$ is the functor of singular chains and the homotopy limit is taken in the model category %$\mathrm{Ch}_{\geq 0}$
of non-negatively graded chain complexes of abelian groups, %Similarly we can consider the functor
%$$
%\mathrm{Emb}_c^{st}(\bullet,\R^n)\colon\widetilde{\calO}_\infty^{st}(\R^m)\to\Top
%$$
%that assigns to an open set $U\in\widetilde{\calO}_\infty^{st}(\R^m)$ the space of {\it standard} embeddings $V\hookrightarrow\R^n$, that is embeddings which are compositions of a rescaling and a translation on each disc and just the inclusion on the anti-disc of $U$.
%One has
%$$
%\chains(\Ebarmn)\simeq\underset{\widetilde{\calO}_\infty^{st}(\R^m)}{\holim}\chains(\mathrm{Emb}_c^{st}(\bullet,\R^n)),
%$$
see \cite[Lemmas~4.4 and~5.1]{AT}.

Let $\balls_m$ and $\balls_n$ be the operads of little $m$-discs and $n$-discs respectively. One can define a natural inclusion $i\colon\balls_m\hookrightarrow\balls_n$ induced by the fixed linear inclusion $i\colon\R^m\hookrightarrow\R^n$ (we are abusing notation by denoting both inclusions by $i$). It was shown in~\cite{LV} that the morphism of operads
$$
i_*\colon \chains^\R(\balls_m)\to \chains^\R(\balls_n)
$$
is a formal map of operads when $n\geq 2m$. This means that the morphism of operads $i_*$ is connected by a zig-zag of quasi-isomorphisms to the morphism $\HH_*(\balls_m,\R)\to\HH_*(\balls_n,\R)$. Using this result one can show that the functor $\chains^\R(\mathrm{Emb}_c^{st}(\bullet,\R^n))$ is formal, i.e. quasi-isomorphic to the functor
$\HH_*(\mathrm{Emb}_c^{st}(\bullet,\R^n),\R)$. One gets
$$
\chains^\R(\Ebarmn)\simeq\underset{\widetilde{\calO}_\infty^{st}(\R^m)}{\holim}\HH_*(\mathrm{Emb}_c^{st}(\bullet,\R^n),\R).
$$
Over a field any complex is quasi-isomorphic to its homology (viewed as a complex with zero differential). On the other hand, a tensor product of a $\Q$-complex with $\R$ preserves the dimensions of the homology groups, thus the above quasi-isomorphism also (non-naturally) holds over $\Q$:
$$
\chains^\Q(\Ebarmn)\simeq\underset{\widetilde{\calO}_\infty^{st}(\R^m)}{\holim}\HH_*(\mathrm{Emb}_c^{st}(\bullet,\R^n),\Q).
$$
We prefer to use the singular chains rather than homology for the left-hand side to stress the fact that the right-hand side in the above quasi-isomorphism is viewed as a complex.

It is easy to see that the functor
$$
\HH_*(\mathrm{Emb}_c^{st}(\bullet,\R^n),\Q)\colon \widetilde{\calO}_\infty^{st}(\R^m)\longrightarrow \mathrm{Ch}^\Q_{\geq 0}
$$
can be factored through the category $\Gamma$. Indeed, the above functor is the composition of the functor $\HH_*(\mathrm{Emb}_*(\bullet,S^n),\Q)$ considered in Subsection~\ref{ss:hom_as_omega_maps} and the functor
$$
\pi_0\colon \widetilde{\calO}_\infty^{st}(\R^m)\longrightarrow \Gamma,
$$
that assigns the set of connected components based in the anti-disc. In case $m=1$ the anti-disc has two connected components, thus we modify a little bit $\pi_0$ so that it assigns only one point to an anti-disc.

Intuitively the Goodwillie-Weiss manifold calculus is a machinery that scans a manifold (in our case $\R^m$ or rather its one-point compactification $S^m$)  with finitely many discs, evaluate the functor on these collections of discs, and then extrapolate the functor from these data on the entire manifold. The fact that the functor
$\HH_*(\mathrm{Emb}_c^{st}(\bullet,\R^n),\Q)$ factors through $\Gamma$ means that instead of scanning our manifold $S^m$ with finitely many  discs we can scan it with finitely many points and extrapolate from there. More precisely in~\cite{AT} we proved the following:
$$
\chains^\Q(\Ebarmn)\simeq\underset{\Gamma}{\hRmod}\left(\chains^\Q((S^m)^\bullet),\HH_*(\mathrm{Emb}_*(\bullet,S^n),\Q)\right),
\eqno(\numb)\label{eq:sing_ch_as_omega_maps}
$$
see \cite[Proposition~6.3]{AT}.

It turns out that the $\Gamma$-module  $\chains^\Q((S^m)^\bullet)$ is formal, see \cite[Lemma~6.5]{AT}. %We actually prove in that paper that its cross-effect is formal, but obviously it is an equivalent statement.
Thus we can replace $\chains^\Q((S^m)^\bullet)$ in~\eqref{eq:sing_ch_as_omega_maps} by $\HH_*((S^m)^\bullet,\Q)$.\footnote{The formality of $\chains^\Q((S^m)^\bullet)$ is in fact the main reason for the Hodge splitting in the higher order Hochschild homology~\cite{PirashviliDold} for which our case is a particular example. This splitting (by Hodge degree $s$)  is discussed in the next subsection.}    Applying Pirashvili's cross-effect functor~\eqref{eq:cross_effect} we obtain exactly the statement of \cite[Theorem~0.2]{AT}. The reason we use $\Omega$-modules instead of $\Gamma$-modules is that working with $\Omega$-modules considerably reduces computations.

\subsection{Double splitting}\label{ss:dbl_split}
It turns out that in the rational homology and homotopy of $\Ebarmn$ one can introduce two additional gradings that we consider below. One obviously has
$$
\widetilde{\HH}_*(S^{m\bullet},\Q)=\bigoplus_{s=0}^{+\infty}\widetilde{\HH}_{ms}(S^{m\bullet});
$$
$$
\hatH=\prod_{t=0}^{+\infty}\hat\HH_{t(n-1)}(\mathrm{C}(\bullet,\R^n),\Q);
$$
$$
\hatP=\prod_{t=0}^{+\infty}\Q\otimes\hat\pi_{t(n-2)+1}(\mathrm{C}(\bullet,\R^n)).
$$
In the above each summand/factor is viewed as a chain complex concentrated in a single homological degree. Obviously in the above the infinite direct sum can be replaced by a direct product and vice versa. We need the sum for the first decomposition and the product for the second and third ones to obtain the following   factorizations:
$$
\underset{\Omega}{\hRmod}\left(\widetilde{\HH}_*(S^{m\bullet},\Q),\hatH\right)=
\prod_{s,t}\underset{\Omega}{\hRmod}\left(\widetilde{\HH}_{ms}(S^{m\bullet},\Q),\hat\HH_{t(n-1)}(\mathrm{C}(\bullet,\R^n),\Q)\right);
\eqno(\numb)\label{eq:homol_split}
$$
$$
\underset{\Omega}{\hRmod}\left(\widetilde{\HH}_*(S^{m\bullet},\Q),\hatP\right)=
\prod_{s,t}\underset{\Omega}{\hRmod}\left(\widetilde{\HH}_{ms}(S^{m\bullet},\Q),\Q\otimes\hat\pi_{t(n-2)+1}(\mathrm{C}(\bullet,\R^n))\right).
\eqno(\numb)\label{eq:homot_split}
$$
It is not hard to show that when the dimensions are in the stable range, that is when $n\ge 2m+2$, the product in~\eqref{eq:homol_split} can be replaced by a direct sum, see~\cite{AT}. Again this is equivalent to saying that only finitely many factors are non-trivial in any given homological degree. We show in Section~\ref{s:koszul} that the same is true for the second equation~\eqref{eq:homot_split}. Thus Theorem~\ref{t:hom_as_rmod} naturally defines a double splitting in the rational homology and homotopy of $\Ebarmn$:
$$
\HH_*(\Ebarmn,\Q)\cong
\bigoplus_{s,t}\underset{\Omega}{\hRmod}\left(\widetilde{\HH}_{ms}(S^{m\bullet},\Q),\hat\HH_{t(n-1)}(\mathrm{C}(\bullet,\R^n),\Q)\right);
\eqno(\numb)\label{eq:homol_split2}
$$
$$
\Q\otimes\pi_*(\Ebarmn)\cong\bigoplus_{s,t}\underset{\Omega}{\hRmod}\left(\widetilde{\HH}_{ms}(S^{m\bullet},\Q),\Q\otimes\hat\pi_{t(n-2)+1}(\mathrm{C}(\bullet,\R^n))\right).
\eqno(\numb)\label{eq:homot_split2}
$$
The additional grading $t$ will be called {\it complexity}. It is related with the Vassiliev filtration in the homology by complexity of strata in the discriminant~\cite{Vassiliev}. The grading $s$ will be called {\it Hodge degree}. It comes from the Hodge type decomposition in the Hochschild cohomology of a commutative algebra and more generally in the (higher order) homology of $\Gamma$-modules~\cite{GS,Loday,PirashviliDold}.

%{\color{blue}{QUESTION TO GREG: Is this isomorphism~\eqref{eq:homol_split2} canonical or it is defined up to some gauge transformations? In other words is it really a splitting or a sort of bifiltration? It would be nice to mention that in the paper.}}

We  expect that the bialgebra structure of $\HH_*(\Ebarmn,\Q)$ does not respect this splitting as a bigrading (we know this is false for $m=1$), but it must respect the splitting as a bifiltration. And the ranks of the terms of the splitting still behave in a way as if the splitting was respected as a bigrading. This follows from the fact that the graph-complex $\calE_\HH^{m,n}$ computing $\HH_*(\Ebarmn,\Q)$ is naturally a polynomial bialgebra generated by the graph-complex $\calE_\pi^{m,n}$ computing $\Q\otimes\pi_*(\Ebarmn)$, see Section~\ref{s:graphs}. This fact is used in Lemma~\ref{l:euler_homot}.

\subsection{Operads}\label{ss:operads}
In recent years it became clear that manifold calculus is deeply related to the theory of operads~\cite{Sinha-OKS, ALV,AT}. This paper continues this tradition. We will now review briefly some relevant notions about operads. A good introduction to the theory of operads can be found in~\cite{LodayVal}.

\subsubsection{Operads as monoids}\label{sss:operads_monoids}
Let $(\calC,\otimes,\unit)$ be a cocomplete symmetric monoidal category, where $\otimes$ distributes over colimits, and let $\BS$ be the category of finite sets with morphisms bijective maps. A {\it right $\BS$-module} with values in $\calC$ is a contravariant functor $M\colon\BS\to\calC$. This functor can be viewed as a sequence of objects $\{M(n),\, n\geq 0\}$ with a  right action of $\Sigma_n$ on each $M(n)$. We view elements of $M(n)$ as something that have $n$ inputs and one output. The symmetric group action permutes the inputs. With a right $\BS$-module one can assign a {\it power series  functor} %{\bf QUESTION FROM GREG: DO WE REALLY WANT TO CALL IT POLYNOMIAL IF M(n) IS NON ZERO FOR INFINITELY MANY n? IT IS MORE LIKE "POWER SERIES FUNCTOR"}
$F_M\colon \calC\to\calC$, that is a functor of the form
$$
F_M(V)=\bigoplus_nM(n)\otimes_{\Sigma_n}V^{\otimes n}.
$$
It turns out that a composition of two power series functors is again a power series functor. And we define composition $M\circ N$ of two $\BS$-modules $M$ and $N$ in such a way that $F_{M\circ N}=F_M\circ F_N$. For an explicit definition of $M\circ N$, see~\cite[Chapter~5]{LodayVal}. Intuitively elements of $M\circ N$ are objects of the form:

\begin{center}
\psfrag{m}[0][0][1][0]{$m$}
\psfrag{n1}[0][0][1][0]{$n_1$}
\psfrag{n2}[0][0][1][0]{$n_2$}
\psfrag{nk}[0][0][1][0]{$n_k$}
\includegraphics[width=4.5cm]{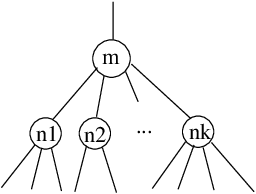}
\end{center}

In each input of some element of $M$ we insert elements of $N$. The operation $\circ$ turns the category of right $\BS$-modules into a monoidal category with unit $\id$:
$$
\id(n)=
\begin{cases}
0,& n\neq 1;\\
\unit,& n=1,
\end{cases}
$$
where $0$ is the initial object of $\calC$. The symmetric group action is trivial for all components of $\id$. An {\it operad} is  a monoid in the category of $\BS$-modules: it is an $\BS$-module $\calP$ endowed with morphisms $\calP\circ\calP\to\calP$, and $\id\to\calP$ which satisfy natural associativity and unit axioms. {\it Left} and {\it right modules} over an operad are defined in an expected way.

Let now $\calC$ be the category of chain complexes over $\Q$. Let $\Sigma$ be a suspension functor that shifts a complex in degree by $+1$. An {\it operadic suspension} is an endofunctor of the category of right $\BS$-modules, that assigns to an $\BS$-module $M$ an $\BS$-module $M[1]$ defined as
$$
M[1](n)=\Sigma^{n-1}M\otimes \sign_n,
$$
where $\sign_n$ is the sign representation of $\Sigma_n$. One can view this operation as a suspension of each input and desuspension of the output for every element of $M$. One can also define $M[1]$ as a symmetric sequence such that $F_{M[1]}=\Sigma^{-1}\circ F_{M}\circ\Sigma$. It is easy to see that the operadic suspension respects the monoidal structure:
$$
\id[1]=\id;
$$
$$
(M\circ N)[1]=(M[1])\circ (N[1]).
$$
Thus an operadic suspension of an operad is again an operad.

\subsubsection{Operads $\Comm$, $\Lie$, $\Assoc$, $\Poiss$, $\balls_n$, $\HH_*(\balls_n)$}\label{sss:operads_examples}
The main classical operads we make use of are the operads $\Comm$ of commutative unital algebras, $\Comm_+$ of commutative non-unital algebras, $\Lie$ of Lie algebras, $\Assoc$ of associative unital algebras, $\Poiss$ of Poisson algebras. We will also need the topological operad $\balls_n$ of little $n$-discs. The latter operad is central for the manifold calculus of functors. It was shown by F.~Cohen~\cite{Coh} that the homology operad $\HH_*(\balls_n)$ is the associative operad $\Assoc$ if $n=1$ and is the operad $\Poiss_{n-1}$ of graded Poisson algebras with a commutative product of degree zero and a Lie bracket of degree $(n-1)$ in case $n\geq 2$, see also~\cite{GetzJon,Sinha-LDO}. For $n\geq 2$, one has $\HH_0(\balls_n)$ is the operad $\Comm$, and ${\HH}_{(\bullet-1)(n-1)}(\balls_n(\bullet))$ is the operad $\Lie[n-1]$ -- the $(n-1)$-fold operadic suspension of the operad $\Lie$.

Each component $\balls_n(k)$ is homotopy equivalent to the configuration space $\mathrm{C}(k,\R^n)$ whose cohomology algebra is generated by the elements $\alpha_{ij}$, $1\leq i\neq j\leq k$, of degree $(n-1)$. The relations are
$$
\left\{
\begin{array}{l}
\alpha_{ij}=(-1)^n\alpha_{ji},\quad \alpha_{ij}^2=0;\\
\alpha_{ij}\alpha_{jk}+\alpha_{jk}\alpha_{ki}+\alpha_{ki}\alpha_{ij}=0,
\end{array}
\right.
$$
see~\cite{Arn,Coh}. The last relation is often called {\it Arnol'd relation}. To every monomial of this algebra one can assign a directed graph putting an edge from vertex $i$ to $j$ for every factor $\alpha_{ij}$. Using the above relations one can show that a  monomial is non-zero if and only if the corresponding graph is a forest. Thus  $\HH^*(\balls_n(k))$ can be described as a certain space of forests on $k$ vertices modulo orientation and Arnol'd relations. Notice that the top cohomology group ${\HH}^{(k-1)(n-1)}(\balls_n(k))$ is the space of trees (forests with exactly one component). They form an $\BS$-module ${\HH}^{(\bullet-1)(n-1)}(\balls_n(\bullet))$ which is naturally isomorphic to the cooperad $\coLie[n-1]$ dual to the operad $\Lie[n-1]$.  We refer to~\cite{Sinha:GD} where the reader can find how exactly this duality works and how exactly the cooperad structure on $\HH{}^*(\balls_n(\bullet))$ looks.

\subsubsection{Infinitesimal bimodules}\label{sss:inf_bimod}
It is almost straightforward to see that the structure of a right $\Omega$-module is equivalent to the structure of a right module over the operad $\Comm_+$ of commutative  algebras without unit. The structure of a right $\Gamma$-module is also intimately related to the commutative operad, more precisely we have seen in~\cite{Turchin,AT} that a right $\Gamma$-module is the same thing as a {\it weak bimodule} over $\Comm$. In this paper we adopt the terminology of Merkulov-Vallette-Loday~\cite{MerkVal,LodayVal} and call them {\it infinitesimal bimodules}. This term is more appropriate here since we are using this notion only for the abelian category of chain complexes. Let us recall that an {\it infinitesimal} (or {\it weak}) bimodule over an operad $\calO$ is a right $\BS$-module $M$ endowed with a family of maps:
$$
\overline{\circ}_i\colon \calO(n)\otimes M(k)\to M(n+k-1),\,\, i=1\ldots n, \text{ (left action)};
\eqno(\numb)\label{eq_left_action}
$$
$$
\underline{\circ}_i\colon M(k)\otimes \calO(n)\to M(k+n-1),\,\, i=1\ldots k, \text{ (right action)},
\eqno(\numb)\label{eq_right_action}
$$
satisfying natural unity, associativity, and compatibility with the $\Sigma_n$-group action conditions, see~\cite{MerkVal,Turchin,AT}.
 Each element of $\calO$ and of $M$ is viewed as an object with some number of inputs and one output. The composition is obtained by inserting an output in one of the inputs, see Figure~\ref{fig1} below.
The result of composition $\overline{\circ}_i(o,m)$, and $\underline{\circ}_i(m,o)$, for $o\in\calO(n)$, and $m\in M(k)$, will be denoted by $o\circ_i m$, and $m\circ_i o$.

\vspace{.3cm}

\begin{figure}[h]
\psfrag{OM}[0][0][1][0]{$o\circ_3 m$}
\psfrag{MO}[0][0][1][0]{$m\circ_2 o$}
\psfrag{o}[0][0][1][0]{$o$}
\psfrag{m}[0][0][1][0]{$m$}
\includegraphics[width=14cm]{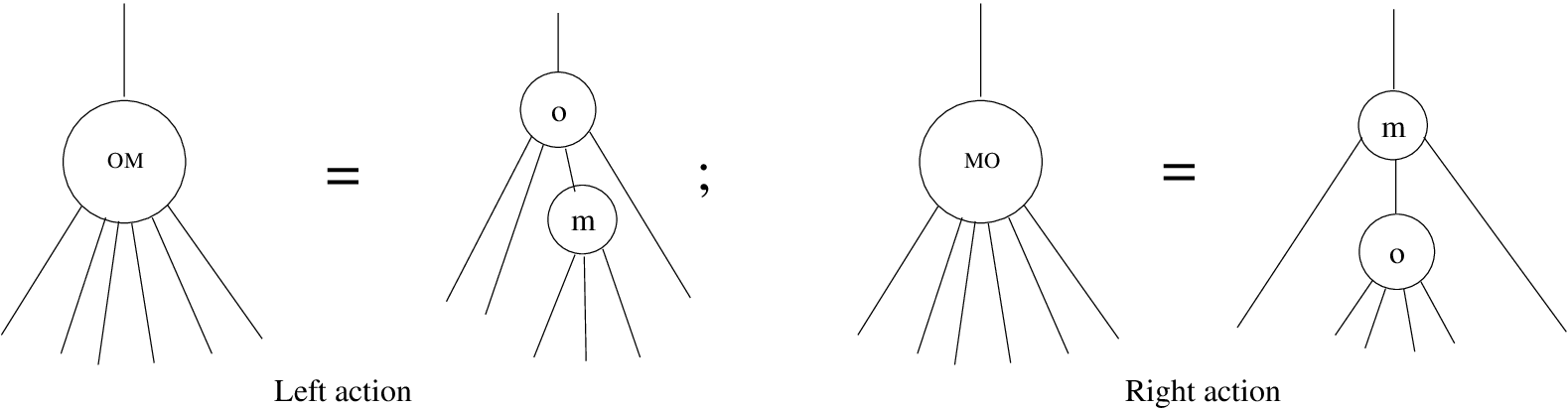}
\caption{}\label{fig1}
\end{figure}

For example, if $\calP$ is an operad endowed with a morphism $\calO\to\calP$, then $\calP$ is naturally an infinitesimal bimodule over $\calO$. Applying this construction to the map
$$
\HH_0(\balls_n(\bullet),\Q)\to\HH_*(\balls_n(\bullet),\Q),\quad n\geq 2,
$$
we obtain that $\HH_*(\balls_n(\bullet),\Q)$ is automatically an infinitesimal  bimodule over $\Comm$, thus $\HH_*(\balls_n(\bullet),\Q)$ is naturally a right $\Gamma$-module. One can easily see that this $\Gamma$-module is exactly $\HH_*(\mathrm{Emb}_*(\bullet,S^n),\Q)$ considered in Subsection~\ref{ss:hom_as_omega_maps}. We will use both $\hat\HH_*(\mathrm{C}(\bullet,\R^n),\Q)$ and $\hat\HH_*(\balls_n(\bullet),\Q)$ to denote its cross-effect $\Omega$-module. Notice that its dual left $\Omega$-module $\hat\HH{}^*(\balls_n(\bullet),\Q)$ in degree $k$ is the subspace of $\HH^*(\balls_n(k),\Q)$ spanned by forests with the property that all connected components have at least two vertices.

\subsection{A section by section outline}\label{ss:outline}
%In Section~\ref{s:dbl_split} we define a double splitting in the rational homology and homotopy of $\Ebarmn$, $n\geq 2m+4$.
In Section~\ref{s:graphs} we construct a graph-complex $\calE_\HH^{m,n}$ computing the rational homology $\HH_*(\Ebarmn,\Q)$.
 This construction is obtained by replacing the target $\Omega$-module $\hatH$ in~\eqref{eq:homol_as_rmod} by a quasi-isomorphic complex of injective $\Omega$-modules.
 Similarly we construct a graph-complex $\calE_\pi^{m,n}$ computing the right-hand side of~\eqref{eq:homot_as_rmod}. It turns out that $\calE_\HH^{m,n}$ is a
 polynomial bialgebra whose subcomplex of primitives is its subcomplex of connected graphs which is exactly $\calE_\pi^{m,n}$. This immediately implies
 Theorem~\ref{t:hom_as_rmod}~(ii). In Section~\ref{ss:small_complex} we  compute the rational homotopy of $\Ebarmn$ in small dimensions. In Section~\ref{s:emb} we compare the rational homotopy of $\Ebarmn$ with that of $\Embmn$ assuming as usual $n\geq 2m+2$. By the Smale-Hirsch theorem~\cite{Hirsch} the space $\Imn$, $n\geq 2m$, is weakly equivalent to the $m$-fold loop space $\Omega^m\mathrm{Inj}(\R^m,\R^n)$, where $\mathrm{Inj}(\R^m,\R^n)$ is the Stiefel manifold of isometric linear injections $\R^m\hookrightarrow\R^n$. One has a fibration
$$
\Omega^{m+1}\mathrm{Inj}(\R^m,\R^n)\stackrel\iota\longrightarrow\Ebarmn\longrightarrow\Embmn.
$$
Since the rational homotopy of the Stiefel manifold $\mathrm{Inj}(\R^m,\R^n)$ is finite-dimensional, it follows that up to a finite-dimensional correction the rational homotopy of $\Ebarmn$ is the same as that of $\Embmn$. In Section~\ref{s:emb} we determine this correction. It is again quite surprising that the image of the induced map $\iota_*$ in rational homotopy depends on the parities of $m$ and $n$ only.

In Section~\ref{s:koszul} we construct an explicit cofibrant replacement (in the projective model structure) of the right $\Omega$-module $\widetilde{\HH}_*(S^{m\bullet},\Q)$. This allows us to construct another type of complexes computing $\HH_*(\Ebarmn,\Q)$ and $\Q\otimes\pi_*(\Ebarmn)$. The corresponding complexes are denoted $\calK_\HH^{m,n}$ and $\calK_\pi^{m,n}$. We are calling them Koszul complexes, since  the main ingredient in the construction of the cofibrant replacement is the Koszul duality between the commutative and Lie operads.\footnote{This construction can actually be used to compute the   Hochschild-Pirashvili homology~\cite{PirashviliDold} in any general situation.} In Subsection~\ref{ss:bicol_graphs} we describe the complex $\HHHH^{m,n}$ dual to $\calK^{m,n}_\HH$ as a certain complex of graphs whose edges can have two colors. Its subcomplex $\HHHH^{m,n}_\pi$ of connected graphs is our third complex computing $\Q\otimes\pi_*(\Ebarmn)$. In Subsection~\ref{ss:operadic_interp} we interpret $\calK_\HH^{m,n}$ as the deformation complex of the morphism of operads
$$
\HH_*(\balls_m,\Q)\stackrel{i_*}\longrightarrow\HH_*(\balls_n,\Q),
$$
where $i\colon \balls_m\hookrightarrow \balls_n$ is the inclusion of the operad of little $m$-discs into the operad of  little $n$-discs (induced by our fixed linear embedding $\R^m\hookrightarrow\R^n$). This gives a connection between the homology of a certain deformation complex of a morphism of operads with the homology of the space of long embeddings --- a connection earlier conjectured by Kontsevich.

In Section~\ref{s:Euler} we compute the generating function of the Euler characteristics of the double splitting in $\HH_*(\Ebarmn,\Q)$. In Appendix we present results of computer calculations of these Euler characteristics in small dimensions for the splitting both in homology and in homotopy.

\section{Graph-complexes}\label{s:graphs}

\renewcommand{\thetable}{\Alph{table}}

\subsection{Complexes of uni-$\geq 3$-valent graphs}\label{ss:u3_graphs}
In this section we  introduce  complexes $\calEH$, $\calEp$ calculating the rational homology and rational homotopy of $\Ebarmn$. %The complex $\calEH$ is quasi-isomorphic to the direct sum of dual complexes of $\HH\HH^{m,n}_{s,t}$ defined previously, but the two complexes look different and are derived in a different way.
The starting point for the derivation of the complex $\calEH$ is Theorem~\ref{t:hom_as_rmod}~(i), which presents
$\chains^\Q(\Ebarmn)$ as the derived ``space'' of maps between right $\Epi$-modules $\widetilde\HH_*(S^{m\bullet},\Q)$ and $ \hat\HH_*(\balls_n(\bullet);\Q)$.  The complex $\calEH$ will be obtained by taking an injective resolution of  $ \hat\HH_*(\balls_n(\bullet);\Q)$.

The main attraction of the complex $\calEH$ is that it will enable us to construct another complex, denoted $\calEp$, which calculates the rational {\it homotopy} groups of the space $\Ebarmn$. In particular, this will enable us to prove the homotopical part of Theorem~\ref{t:hom_as_rmod}.

The complex $\calEp$ is defined as a {\it  complex of  connected uni-$\geq 3$-valent graphs}. We mention that such graph-complexes appeared earlier in the study of the Hodge decomposition of the homology groups of the space of long knots $\overline{\mathrm{Emb}}_c(\R,\R^n)$, see~\cite[Section~11]{Turchin}. Our Theorem~\ref{t:HE=RH} below is exactly~\cite[Conjecture~11.1]{Turchin} from the above reference. The construction of $\calEp$ was inspired by a work of Bar-Natan~\cite{BarNatan} where he studies the bialgebra of chord diagrams -- an object that combinatorially encodes finite type invariants of classical knots in $\R^3$. He shows that the space of primitive elements of this bialgebra is naturally isomorphic to a certain {\it space of  uni-trivalent graphs} quotiented out by some orientation and $IHX$ relations. One can easily see that this space is precisely the degree zero homology of our complex $\calE^{1,3}_\pi$ that we define below.

Let us define the complex $\calEp$. It is spanned by abstract connected graphs having a non-empty set of non-labeled {\it external vertices} of valence~1, and a possibly empty set of non-labeled {\it internal vertices} of valence~$\geq 3$. The graphs are allowed to have loops (edges joining a vertex to itself) and multiple edges. For such graph define its {\it orientation set} as the union of the set of its external vertices (considered as elements of degree $-m$), the set of its internal vertices (considered as elements of degree $-n$), and the set of its edges (considered as elements of degree $(n-1)$). By an {\it orientation} of a graph we will understand ordering of its orientation set together with an orientation of all its edges. Two such graphs are {\it equivalent} if there is a bijection between their sets of vertices and edges respecting the adjacency structure of the graphs, orientation of the edges, and the order of the orientation sets. The space of $\calEp$ is the quotient space of the vector space freely spanned by such graphs modulo the orientation relations:

\vspace{.2cm}

(1) $\Upsilon_1=(-1)^n\Upsilon_2$ if $\Upsilon_1$ differs from $\Upsilon_2$ by an orientation of an edge.

(2) $\Upsilon_1=\pm\Upsilon_2$, where $\Upsilon_2$ is obtained from $\Upsilon_1$ by a permutation of the orientation set. The sign here is the Koszul sign of permutation taking into account the degrees of the elements.

\vspace{.2cm}

The differential $\partial\Upsilon$ of a graph $\Upsilon\in\calEp$ is defined as the sum of expansions of its internal vertices. An expanded vertex is  replaced  by an edge. The set of edges adjacent to the  expanded vertex splits into two sets -- one containing the edges that go to one vertex of the new edge and the other set containing the edges that go to the other vertex.  An expansion of a vertex of valence $\ell$ is  a sum of $\frac{2^\ell-2\ell-2}{2}=2^{\ell-1}-\ell-1$ graphs obtained in such way. One subtracts  $2\ell+2$ to exclude graphs with internal vertices of valence $<3$, and one divides by 2 because of the symmetry. The orientation set of a new graph is obtained by adding the new vertex and the new edge as the first and second elements to the orientation set, and by orienting the new edge from the old vertex to the new one. There is a freedom which of 2 vertices of the new edge is considered as a new one and which as an old one, but regardless of this choice, the orientation of the boundary graph is the same. All the graphs in the differential appear with positive sign (the sign is hidden in the way we order the orientation set and orient the new edge).

Finally define the graph-complex $\calEH$ as the free polynomial bialgebra generated by $\calEp$. In other words $\calEH$ can be viewed as a graph-complex spanned by possibly empty or disconnected graphs with each connected component from $\calEp$.

In addition to the total grading (which is the sum of the degrees of the elements in the orientation set of a graph), we define two other gradings:
{\it complexity} --- the first Betti number of the graph obtained from initial graph by gluing together all univalent vertices, and {\it Hodge degree} --- the number of external vertices.
Notice that the differential preserves both the complexity and the Hodge degree. Section~\ref{ss:small_complex} describes $\calEp$ in complexities~$\leq 3$. We will denote the part of $\calEH$, $\calEp$ concentrated in complexity $t$ and Hodge degree $s$ by  $\calEH(s,t)$, $\calEp(s,t)$ respectively.

\begin{theorem}\label{t:HE=RH}
For $n\geq 2m+2$, the homology of the graph-complex $\calEH$ (respectively $\calEp$) is isomorphic to the rational homology (respectively homotopy) of $\Ebarmn$:
$$
\HH(\calEH)\simeq \HH_*(\Ebarmn,\Q);
\eqno(\numb)\label{eq:HE=RH}
$$
$$
\HH(\calEp)\simeq \pi_*(\Ebarmn)\otimes \Q.
\eqno(\numb)\label{eq:HE=Rp}
$$
\end{theorem}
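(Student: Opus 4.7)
The plan is to first prove the homology isomorphism~\eqref{eq:HE=RH}, and then deduce the homotopy isomorphism~\eqref{eq:HE=Rp} (together with Theorem~\ref{t:hom_as_rmod}~(ii)) from the fact that $\calEH$ is the free polynomial bialgebra on $\calEp$.

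For the homology part, I would begin from Theorem~\ref{t:hom_as_rmod}~(i), which identifies $\HH_*(\Ebarmn,\Q)$ with $\HH\bigl(\underset{\Omega}{\hRmod}(\widetilde{\HH}_*(S^{m\bullet},\Q),\hatH)\bigr)$, and construct an explicit injective resolution $J^\bullet$ of $\hatH$ in the category of non-negatively graded chain complexes of right $\Omega$-modules such that $\mathrm{Hom}_\Omega(\widetilde{\HH}_*(S^{m\bullet},\Q), J^\bullet) \cong \calEH$. As recalled in \S\ref{sss:inf_bimod}, the module dual to $\hatH$ is spanned by forests on $k$ vertices having no isolated component, modulo Arnold and orientation relations; this presentation is Koszul-dual to $\Comm_+$. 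A resolution is obtained by gluing such forests along \textit{expansion} internal vertices that encode cooperations of $\coLie[n-1]$, with the valence-$\geq 3$ condition arising from minimality of the resolution (valence-$2$ internal vertices are redundant, valence-$\leq 1$ ones correspond to degenerate cooperations). Applying $\mathrm{Hom}_\Omega(\widetilde{\HH}_*(S^{m\bullet},\Q),-)$ attaches $s$ external vertices each of degree $-m$, producing the Hodge-degree-$s$ summand, and the expansion differential dualizes to the vertex-splitting differential of $\calEH$ described in \S\ref{ss:u3_graphs}.

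For the homotopy part, $\calEH \cong \Lambda(\calEp)$ by construction as a free graded commutative algebra with derivation differential, so K\"unneth yields $\HH(\calEH) \cong \Lambda(\HH(\calEp))$, identifying $\HH(\calEp)$ with the primitives of the Hopf algebra $\HH(\calEH)$. On the other hand, for $n\geq 2m+2$ the space $\Ebarmn$ carries a commutative H-space structure (stacking of embeddings along a chosen coordinate direction), so $\HH_*(\Ebarmn,\Q)$ is a simply connected bicommutative Hopf algebra and, by Milnor--Moore, coincides with $\Lambda(\Q\otimes\pi_*(\Ebarmn))$, its primitives being $\Q\otimes\pi_*(\Ebarmn)$. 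Once the isomorphism from~\eqref{eq:HE=RH} is shown to intertwine these Hopf-algebra structures, comparing primitives yields~\eqref{eq:HE=Rp}.

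The principal obstacle is the combinatorial identification of the Hom-complex with $\calEH$: the Koszul-dual sign conventions, edge orientations (of degree $n-1$, with flip sign $(-1)^n$), and vertex degrees ($-m$ external, $-n$ internal) must emerge naturally from the resolution rather than being imposed, and the expansion-of-internal-vertex differential must match the cobar differential of the Koszul resolution. A secondary but nontrivial issue is verifying compatibility of the Hopf-algebra structures in the last step, which I expect to reduce to identifying the stacking H-space product with the disjoint-union product of graphs via the operadic interpretation of stacking as insertion of little discs.
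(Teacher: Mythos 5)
Your overall strategy matches the paper's: Theorem~\ref{t:hom_as_rmod}~(i) plus an explicit injective resolution of $\hatH$ for the homology half, and the free-polynomial-bialgebra structure of both sides for the homotopy half. The gap is in how you close the second half. You propose to show that the isomorphism in~\eqref{eq:HE=RH} intertwines Hopf-algebra structures and then read off the primitives. You correctly flag this as nontrivial --- the isomorphism is built through a long chain of quasi-isomorphisms (Weiss calculus, relative formality of little discs, $\Gamma/\Omega$ translations in \S\ref{ss:about_theorem}), and tracking a product through all of it is a substantial undertaking --- and you do not actually carry it out. But the paper does not need it. Both $\HH(\calEH)$ and $\HH_*(\Ebarmn,\Q)$ are connected free graded-commutative $\Q$-algebras on known generator spaces: $\HH(\calEp)$ on the graph side (by K\"unneth from $\calEH=\Lambda(\calEp)$), and $\Q\otimes\pi_*(\Ebarmn)$ on the topological side (because $\Ebarmn$ is a connected $(m+1)$-loop space for $n\geq 2m+2$,~\cite{Budney,Turchin}). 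For such algebras a \emph{graded-vector-space} isomorphism $\Lambda(V)\cong\Lambda(W)$ already forces $V\cong W$ degree by degree by comparing Poincar\'e series; no compatibility with products is required, and this is exactly why the paper asserts that~\eqref{eq:HE=RH} and~\eqref{eq:HE=Rp} are \emph{equivalent} rather than trying to upgrade one to a Hopf statement. Since the theorem asserts isomorphism of graded groups, not of bialgebras, your stronger aim leaves an unnecessary hole where a counting argument suffices.

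For the homology half your sketch is essentially the paper's construction via the right $\Omega$-module ${\hat D_n}{}^\bullet$, quasi-isomorphic to $\hat\HH_*(\balls_n(\bullet),\Q)$ by Proposition~\ref{p:operad_inclusion} and injective in each homological degree by Proposition~\ref{p:injective}; applying $\underset{\Omega}{\Rmod}(\widetilde{\HH}_*(S^{m\bullet},\Q),-)$ and Lemmas~\ref{l:inj_mod}--\ref{l:gr_omega_isom} identifies the Hom-complex with $\calEH$. Two small corrections to your narrative. First, the valence-$\geq 3$ condition on internal vertices is built into Kontsevich's complex $D_n^\bullet$ rather than a consequence of minimality of the resolution (which is not minimal). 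Second, the identification of the differential is not automatic: after restricting to the multiderivation subspaces $M({D_n}^\bullet)$ (all external vertices univalent), the external-vertex part of the differential of ${\hat D_n}{}^\bullet$ acts trivially on the image of any $\Omega$-module map out of $\widetilde{\HH}_*(S^{m\bullet},\Q)$, and it is this cancellation --- spelled out in the proof of Theorem~\ref{t:HE_Ext} --- that leaves precisely the internal-vertex expansions as the differential of $\calEH$.
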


It is well-known that the space $\Ebarmn$, $n\geq 2m+2$, is a connected $(m+1)$-loop space~\cite{Budney,Turchin}. This implies that its rational homology is a graded polynomial bialgebra generated by its rational homotopy groups. As a consequence the statements~\eqref{eq:HE=RH} and~\eqref{eq:HE=Rp} of the previous theorem are equivalent. The statement~\eqref{eq:HE=RH} follows from  Theorem~\ref{t:hom_as_rmod}~(i)  and  the statement~\eqref{eq:HEH_Ext} of Theorem~\ref{t:HE_Ext} below.

\begin{theorem}\label{t:HE_Ext}
For $n\geq 2m+2$, one has weak equivalences of chain complexes
$$
\calEH\simeq \underset{\Omega}{\hRmod}\left(\widetilde{\HH}_*(S^{m\bullet},\Q),\hat\HH_*(\balls_n(\bullet),\Q)\right).
\eqno(\numb)\label{eq:HEH_Ext}
$$
$$
\calEp\simeq \underset{\Omega}{\hRmod}\left(\widetilde{\HH}_*(S^{m\bullet},\Q),\hat\pi_*(\balls_n(\bullet))\otimes\Q\right),
\eqno(\numb)\label{eq:HEp_Ext}
$$
Moreover the above isomorphisms  preserve both the complexity and the Hodge degree, which means
$$
\calEH(s,t)\simeq \underset{\Omega}{\hRmod}\left(\widetilde{\HH}_{ms}(S^{m\bullet}),\hat\HH_{(n-1)t}(\balls_n(\bullet),\Q)\right).
\eqno(\numb)\label{eq:HEH_Ext2}
$$
$$
\calEp(s,t)\simeq \underset{\Omega}{\hRmod}\left(\widetilde{\HH}_{ms}(S^{m\bullet}),\hat\pi_{1+(n-2)t}(\balls_n(\bullet))\otimes\Q\right),
\eqno(\numb)\label{eq:HEp_Ext2}
$$
\end{theorem}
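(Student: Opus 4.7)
The plan is to construct an explicit injective resolution $\hat\HH_*(\balls_n(\bullet),\Q)\to I^\bullet$ in the abelian category $\mathrm{mod}{-}\Omega$ of $\Omega$-modules, then identify the associated hom complex $\mathrm{Hom}_\Omega(\widetilde{\HH}_*(S^{m\bullet},\Q), I^\bullet)$ with $\calEH$ as a bigraded chain complex. This computes the derived hom in \eqref{eq:HEH_Ext} because an injective resolution computes $\mathrm{Ext}$ in an abelian category and $\widetilde{\HH}_*(S^{m\bullet},\Q)$ carries zero differential. The source is especially simple: for each $k\geq 1$, $\widetilde{\HH}_*(S^{m\bullet},\Q)(k)=\Q$, concentrated in degree $mk$, with $\Sigma_k$ acting by $\sign^m$, and the structure map along a surjection $\alpha\colon k\twoheadrightarrow\ell$ is the unique compatible one. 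Thus a morphism from this source into $I^\bullet$ picks out, for each $k$, a single $\sign^m$-equivariant element of correct total degree, compatibly under surjections.

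The resolution $I^\bullet$ is defined combinatorially: $I^p(k)$ is spanned by equivalence classes of graphs with $k$ labelled univalent external vertices and $p$ unlabelled internal vertices of valence $\geq 3$, modulo the orientation relations used in the definition of $\calEp$, with each edge of degree $n-1$ and each internal vertex of degree $-n$. The differential $d\colon I^p\to I^{p-1}$ is the vertex-expansion differential that sums over ways of splitting an internal vertex into two internal vertices joined by a new edge, and the augmentation $I^0\twoheadrightarrow \hat\HH_*(\balls_n(\bullet),\Q)$ sends a graph with no internal vertices to the corresponding forest (or zero if the graph is not a forest). That each $I^p$ is injective in $\mathrm{mod}{-}\Omega$ should follow from its description as a direct sum of cofree $\Omega$-modules on $\Sigma_\bullet$-modules of "internal graph templates"; that $I^\bullet\to \hat\HH_*(\balls_n(\bullet),\Q)$ is a quasi-isomorphism should follow from the Koszul duality between the operads $\Comm_+$ and $\Lie[-1]$ (equivalently the self-duality of $\Lie[n-1]$ and $\coLie[n-1]$): in each arity $k$, the complex $I^\bullet(k)$ is a suitable cobar complex of $\coLie[n-1]$ whose homology is concentrated in degree $0$ and reproduces the forest description of $\hat\HH_*(\balls_n(k),\Q)$ recalled in Subsection~\ref{sss:inf_bimod}.

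Applying $\mathrm{Hom}_\Omega(\widetilde{\HH}_*(S^{m\bullet},\Q),-)$ to $I^\bullet$ then yields exactly $\calEH$: each external vertex contributes degree $-m$ (the $\sign^m$-equivariance matching the orientation behavior of external vertices), while the graph-theoretic internal structure of $I^\bullet$ contributes the internal vertices and edges with their prescribed degrees. The Hodge degree $s$ corresponds to the arity $k$ of the source (i.e., the number of external vertices), and the complexity $t$ corresponds to the first Betti number of the graph after identifying external vertices, i.e., to the degree $t(n-1)$ in the target $\hat\HH_*(\balls_n(\bullet),\Q)$; both are preserved by the vertex-expansion differential, establishing the refined equivalence \eqref{eq:HEH_Ext2}. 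To obtain \eqref{eq:HEp_Ext} and \eqref{eq:HEp_Ext2} one restricts to the connected (tree) sub-resolution, which corresponds on the target side to the primitive piece of the cocommutative coalgebra $\hat\HH_*(\balls_n(\bullet),\Q)$, which is precisely $\hat\pi_*(\balls_n(\bullet))\otimes\Q$ sitting in degree $1+(n-2)t$; this restriction selects the connected graphs, i.e., $\calEp\subset\calEH$.

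The hard part will be the combinatorial verification of the resolution: checking that the $\Omega$-module structure on $I^\bullet$ (pulling back along surjections) genuinely matches that of $\hat\HH_*(\balls_n(\bullet),\Q)$, and that the vertex-expansion differential coincides with the cobar differential of $\coLie[n-1]$ with all Koszul signs arising from reordering the orientation set and orienting new edges. The injectivity of each $I^p$ in $\mathrm{mod}{-}\Omega$ (not merely in the category of $\Sigma$-modules) also requires care. The case $m=1$ of this theorem is \cite[Conjecture~11.1]{Turchin}, and the arguments developed there for Hodge-graded graph-complexes of long knots in $\R^n$ provide the template; the general $m$ case follows by the same methods, with appropriate degree shifts and sign conventions dictated by the degrees $-m$, $-n$, $n-1$ of external vertices, internal vertices, and edges.
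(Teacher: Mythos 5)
Your basic strategy is the same as the paper's: replace the target $\Omega$-module by an injective graph resolution, then compute the hom complex arity by arity via Yoneda. But the specific ``resolution'' you write down is not quasi-isomorphic to $\hat\HH_*(\balls_n(\bullet),\Q)$, and this is fatal. Taking $I^p(k)$ to be the graphs with $k$ labelled \emph{univalent} external vertices and $p$ internal vertices, with differential expanding internal vertices only, already fails in arity $3$: the tripod (one internal trivalent vertex joined to each of the three univalent external vertices) has zero internal differential, since a trivalent vertex has no admissible expansions, and cannot bound, since a graph with three univalent external vertices and no internal vertex would be a perfect matching on three points and so $I^0(3)=0$. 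Hence the tripod is a nonzero class in $\HH(I^\bullet(3))$ in total degree $2n-3$, whereas $\hat\HH_*(\balls_n(3),\Q)$ is concentrated in degrees that are multiples of $n-1$. (The double edge in arity $2$ for $n$ odd, or the loop in arity $1$ for $n$ even, are similar cycles your differential fails to kill.) What kills such classes in the genuine resolution ${\hat D_n}{}^\bullet$ (Subsections~\ref{ss:DnPn}--\ref{ss:reduced_graphs}) is exactly the part of the differential you discarded, the expansion of \emph{external} vertices of valence $\geq 2$: in ${\hat D_n}{}^3$ the tripod bounds the ``wedge'' ($1$ joined to both $2$ and $3$, valence-two labelled vertex $1$, no internal vertex), which you have deleted by forbidding external vertices of higher valence.

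The resolution must therefore keep external vertices of arbitrary valence $\geq 1$ and the full vertex-expansion differential; Proposition~\ref{p:operad_inclusion} makes $\hat\HH_*(\balls_n(\bullet),\Q)\hookrightarrow{\hat D_n}{}^\bullet$ a quasi-isomorphism, and Proposition~\ref{p:injective} and Lemma~\ref{l:gr_omega_isom} give degreewise injectivity by identifying ${\hat D_n}{}^\bullet$ with $\bigoplus_k\Omega_k^*\otimes_{\Sigma_k}M({D_n}^k)$ as \emph{graded} $\Omega$-modules only (the paper explicitly warns this is not a chain isomorphism and uses a filtration). The passage to $\calEH$ is then the actual content of the proof, not a definition: after applying $\underset{\Omega}{\Rmod}(\widetilde\HH_*(S^{m\bullet},\Q),-)$, the Yoneda identification of Lemma~\ref{l:inj_mod} forces the value on the generator of $\widetilde\HH_{ms}(S^{ms})$ to land in the univalent subspace $M({D_n}^s)\subset{\hat D_n}{}^s$, and only \emph{there} does the external-expansion part of the differential vanish, leaving the internal-expansion differential of $\calEH$. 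You cannot truncate to univalent graphs before taking homs. Finally, for \eqref{eq:HEp_Ext} there is a second issue you do not address: ${\hat P_n}{}^\bullet$ is degreewise injective but, unlike ${\hat D_n}{}^\bullet$, it is not an injective resolution of $\hat\pi_*(\balls_n(\bullet))\otimes\Q$; the two are related only by a zigzag of quasi-isomorphisms (Proposition~\ref{p:primitive_zigzag}), so one needs the balancing-$\ext$ argument together with the boundedness of Remark~\ref{r:primitive_zigzag}, not a formal restriction to ``the connected sub-resolution''.
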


This theorem will be proved in Subsection~\ref{ss:reduced_graphs}. The idea of the proof is to replace the  right $\Omega$-modules
$\hat\pi_*(\balls_n(\bullet))\otimes\Q$ and $\hat \HH_*(\balls_n(\bullet),\Q)$ by quasi-isomorphic differential graded right $\Omega$-modules ${\hat P_n}{}^\bullet$, ${\hat D_n}{}^\bullet$, which happen to be injective in each homological degree. All their components  ${\hat P_n}{}^k$, ${\hat D_n}{}^k$, $k\geq 0$, are certain graph-complexes, see Subsections~\ref{ss:DnPn}-\ref{ss:reduced_graphs}.

Notice that the second statement of Theorem~\ref{t:HE=RH} together with the statement~\eqref{eq:HEp_Ext} of Theorem~\ref{t:HE_Ext} imply Theorem~\ref{t:hom_as_rmod}~(ii).

\subsubsection{Right $\Gamma$-modules ${D_n}^\bullet$, ${P_n}^\bullet$}\label{ss:DnPn}
The right $\Gamma$-modules that we define in this section were introduced  in~\cite{Turchin}.   The $k$-th component ${D_n}^k$  is a vector space spanned by the graphs with $k$ external vertices labeled by $1,2,\ldots,k$, and a bunch of non-labeled internal vertices. The external vertex can have any valence (including zero), the internal vertices are of valence $\geq 3$. The graphs are allowed to be disconnected, but each connected component of  a graph should have at least one external vertex. The graphs can have loops and multiple edges. {\it Orientation set} of such graph consists of the set of its internal vertices (having degree $-n$), and edges (having degree $(n-1)$). By an {\it orientation} of a graph we understand an ordering of its orientation set and a choice of orientation made for each one of its edges.  Two graphs are equivalent if there is a bijection between their sets of internal vertices and edges respecting the adjacency structure of the graphs, orientation of the edges, and the order of their orientation sets. The orientation relations are the same as in Subsection~\ref{ss:u3_graphs}. The differential is the sum of expansions of vertices. An expansion of an external vertex produces one external vertex with the same label and one internal one. An expansion of external vertices of valence $\ell$ is a sum of $2^\ell-\ell-1$ graphs. We excluded $\ell+1$ cases to make sure that the new internal vertex has valence $\geq 3$. The sequence of differential graded vector spaces ${D_n}^k,\, k\geq 0$, forms an operad. The composition $\Upsilon_1\circ_i\Upsilon_2$ of two graphs is defined as insertion of $\Upsilon_2$ into the $i$-th vertex of~$\Upsilon_1$, see Figure~\ref{fig:graph_compos}. The orientation set of each graph in the sum is obtained by concatenation of the orientation set of~$\Upsilon_1$ and that of~$\Upsilon_2$. With this definition all signs are positive in this figure.

%\vspace{0.3cm}

\begin{figure}[h]
\psfrag{c2}[0][0][1][0]{$\circ_2$}
\psfrag{c3}[0][0][1][0]{$\circ_3$}
\psfrag{p}[0][0][1][0]{$\pm$}
\includegraphics[width=14cm]{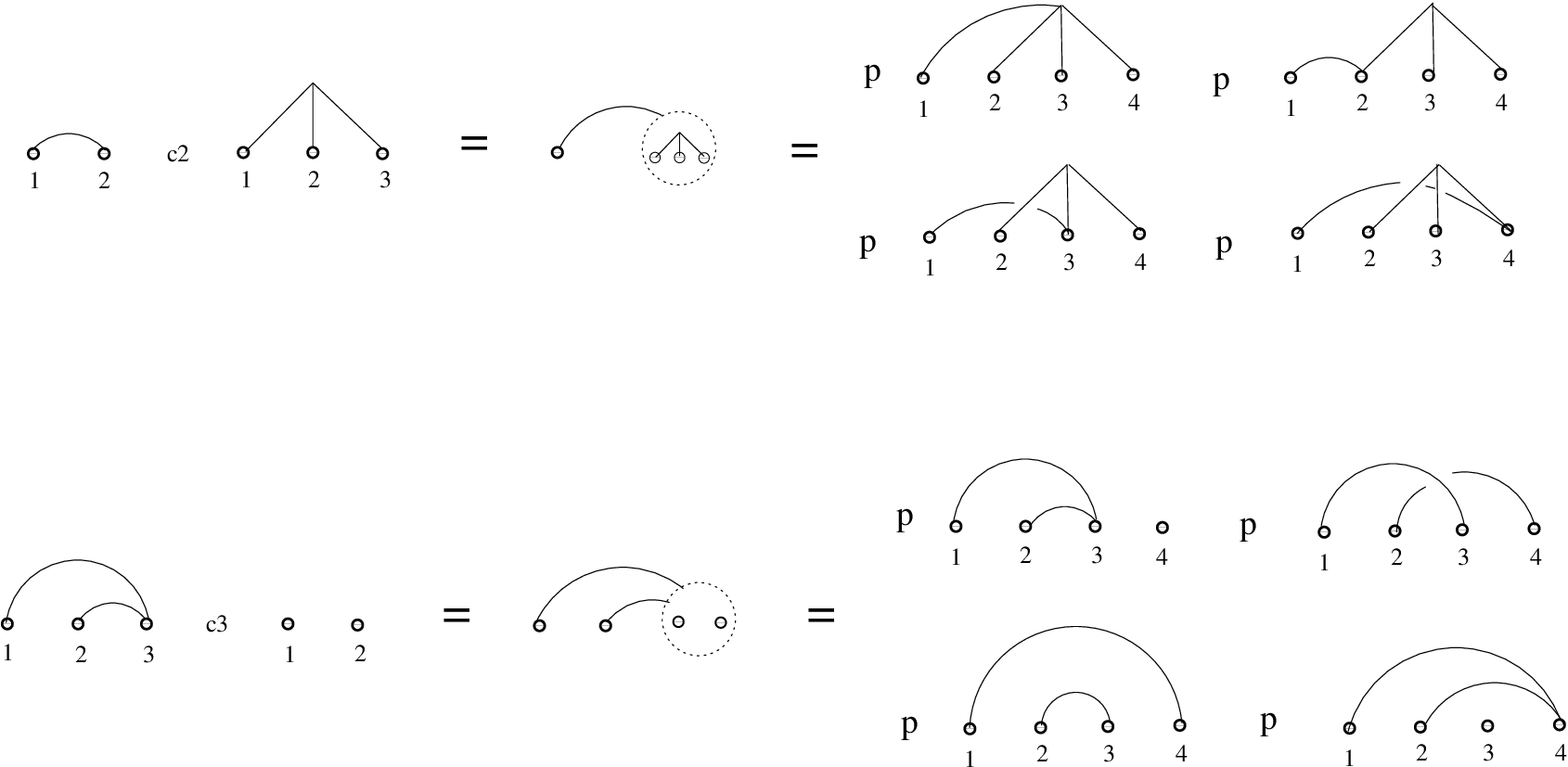}
\caption{Examples of composition}\label{fig:graph_compos}
\end{figure}

Recall that $\{\HH_*(\balls_n(\bullet),\Q)\}$ is the homology operad of $n$-dimensional little cubes. This homology operad is the operad of $(n-1)$-Poisson algebras, i.e. operad of graded Poisson algebras with a commutative product of degree zero and a Lie bracket of degree $(n-1)$.

%\begin{proposition}[ \cite[Theorem~9.1]{LV}, \cite[Proposition~9.2]{Turchin} ]\label{p:operad_inclusion}
\begin{proposition}[ \cite{LV,LT,Turchin} ]\label{p:operad_inclusion}
For $n\geq 2$, the assignment
$$
x_1x_2\mapsto \,
\begin{picture}(20,15)
\put(2,5){\circle*{3}}
\put(18,5){\circle*{3}}
\put(0,-1){$_1$}
\put(16,-1){$_2$}
\end{picture}\, , \qquad [x_1,x_2]\mapsto \,
\begin{picture}(20,15)
\put(2,5){\circle*{3}}
\put(18,5){\circle*{3}}
\qbezier(2,5)(10,15)(18,5)
\put(0,-1){$_1$}
\put(16,-1){$_2$}
\end{picture}\, ,
$$
where $x_1x_2,$ $[x_1,x_2]\in \HH_*(\balls_n(2),\Q)$ are the product and the bracket of the operad  of $(n-1)$-Poisson algebras, defines an inclusion of operads
$$
\xymatrix{
\HH_*(\balls_n(\bullet);\Q)\,\ar@{^{(}->}[r]^-\simeq&{D_n}^\bullet
}
\eqno(\numb)\label{eq:operad_inclusion}
$$
that turns out to be a quasi-isomorphism ($\HH_*(\balls_n(\bullet),\Q)$ is considered to have a zero differential).
\end{proposition}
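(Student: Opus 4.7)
The plan is to split the proof into two tasks: (i) verifying that the assignment extends uniquely to a morphism of operads, and (ii) showing this morphism is a quasi-isomorphism of chain complexes of $\BS$-modules.

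For (i), I would use the standard presentation of $\HH_*(\balls_n(\bullet);\Q) = \Poiss_{n-1}$ by generators and relations: a commutative product of degree $0$ and a Lie bracket of degree $(n-1)$, subject to commutativity, graded anti-symmetry, Jacobi, and Leibniz. Each relation is then checked against the image graphs in ${D_n}^\bullet$. The commutativity of the two-vertex edgeless graph is immediate, since its orientation set is empty and the $\Sigma_2$-action is trivial. The graded anti-symmetry of the bracket graph is a direct application of orientation relation (1): the swap $1 \leftrightarrow 2$ reverses the orientation of the unique edge and produces a sign $(-1)^n$, which is exactly the sign for a Lie bracket in $\Lie[n-1]$. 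The Jacobi and Leibniz identities are diagrammatic: after expanding the operadic compositions according to Figure~\ref{fig:graph_compos}, each side becomes a signed sum of trees with two edges, and one matches terms pairwise using the edge-orientation and orientation-set signs.

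For (ii), the key observation is that ${D_n}^k$ is a Kontsevich-style graph complex computing the cohomology of the configuration space $\Config{k}{\BR^n}$. The differential expands internal vertices; graphs with no internal vertices form a subcomplex with trivial differential, identified precisely with the space of forests modulo orientation and Arnol'd relations, i.e. with $\HH_*(\balls_n(k),\Q)$ as described at the end of Subsection~\ref{sss:operads_examples}. The strategy is therefore to show that the quotient subcomplex, spanned by graphs with at least one internal vertex, is acyclic. I would filter by the number of internal vertices (or equivalently by the first Betti number) and construct an explicit contracting homotopy on the associated graded, of the type familiar from the proof of formality of the little discs operad: given a graph with an internal vertex, one contracts a canonically chosen edge incident to it, and the resulting operator is shown to anti-commute with the expansion differential up to the sign of the edge. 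Alternatively, one can invoke Kontsevich's formality of $\balls_n$ over $\BR$, from which the desired quasi-isomorphism follows, and then descend to $\Q$ using rationality of the complex.

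The main obstacle is the bookkeeping of signs in the contracting homotopy for (ii): the edges carry degree $(n-1)$ and internal vertices carry degree $-n$, so the choice of which edge to contract, together with the reordering of the orientation set and the reversal of edge orientations, has to be made consistently in order for the homotopy to anti-commute with the differential modulo the subcomplex of forests. Once this combinatorial step is pinned down, the inclusion~\eqref{eq:operad_inclusion} is an injection by inspection on the forest subcomplex, and it is a quasi-isomorphism by the acyclicity argument above, completing the proof.
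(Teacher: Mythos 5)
The paper does not actually prove this proposition: it is stated with attribution to \cite{LV,LT,Turchin}, and no internal argument is given, so there is no proof in the paper to compare yours against. Evaluating your sketch on its own terms, I find the following.

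Part (i) is in the right spirit. The sign check is correct: reversing the edge of the two-vertex graph gives $(-1)^n$, which equals the $\Sigma_2$-action on the bracket generator of $\Lie[n-1]$, since $\Lie(2)$ is the sign representation and the $(n-1)$-fold operadic suspension twists it by $\mathrm{sign}^{\otimes(n-1)}$. The Jacobi and Leibniz checks are indeed a matter of sign-tracking in the graph compositions.

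Part (ii) contains a structural misreading of $D_n^\bullet$ which undermines the entire strategy. You assert that \emph{``graphs with no internal vertices form a subcomplex with trivial differential, identified precisely with the space of forests modulo orientation and Arnol'd relations''}. Neither clause is right. The differential on $D_n^\bullet$ is vertex \emph{expansion}, and expanding a vertex creates a new internal vertex; therefore the differential takes a graph with no internal vertices to graphs with one internal vertex. The no-internal-vertex graphs thus span a \emph{quotient} complex, not a subcomplex (it is the span of graphs with at least one internal vertex that is preserved by $d$). Moreover, the differential is far from zero on no-internal-vertex graphs: any external vertex of valence $\ell\ge 2$ can be expanded, in $2^\ell-\ell-1\ge 1$ ways, so for instance $d(e_{12}e_{13})\neq 0$. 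Only very particular linear combinations are cycles, and these are exactly the image of the operad map from part (i); that image is a proper subspace of the no-internal-vertex graphs. (One can check by orientation bookkeeping that $d\bigl(e_{12}e_{13}+e_{12}e_{23}\bigr)=0$, matching the image of $[[x_1,x_2],x_3]$.) Finally, the span of no-internal-vertex graphs is not ``forests mod Arnol'd'': $D_n^k$ imposes \emph{only} orientation relations, never Arnol'd relations, and it contains non-forest graphs on the external vertices (cycles, multiple edges, $K_4$, loops for $n$ even), which live above the top degree $(k-1)(n-1)$ of $\HH_*(\balls_n(k))$.

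Consequently there is no direct-sum decomposition of $D_n^k$ into ``image of $\HH_*$'' plus an $H$-acyclic complement, and the filtration-plus-contracting-homotopy plan does not get started. Establishing that the inclusion is a quasi-isomorphism is in fact the serious content here --- it is essentially Kontsevich's formality for configuration spaces, realized by configuration-space propagator integrals --- and is not reducible to an elementary spectral sequence argument. Your ``alternative route'' of invoking formality is the right instinct, but you would still have to identify $D_n^\bullet$ (in its dual cochain form) with the Kontsevich graph model for $\mathrm{C}(k,\R^n)$ and show the formality quasi-isomorphism restricts to \emph{this specific} inclusion; that is precisely the work done in \cite{LT} and \cite{Turchin}, which the proposition cites.
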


The operad ${D_n}^\bullet$ is an operad in the category of differential graded cocommutative coalgebras. The coproduct in each component is given by cosuperimposing, see Figure~\ref{fig:coprod}.
%{\bf GREG: I DID NOT REALLY UNDERSTAND THE COPRODUCT}:

%\vspace{0.3cm}
%
%\begin{center}
\begin{figure}[h]
\psfrag{D}[0][0][1][0]{$\Delta$}
\psfrag{p}[0][0][1][0]{$\pm$}
\psfrag{o}[0][0][1][0]{$\otimes$}
\includegraphics[width=13cm]{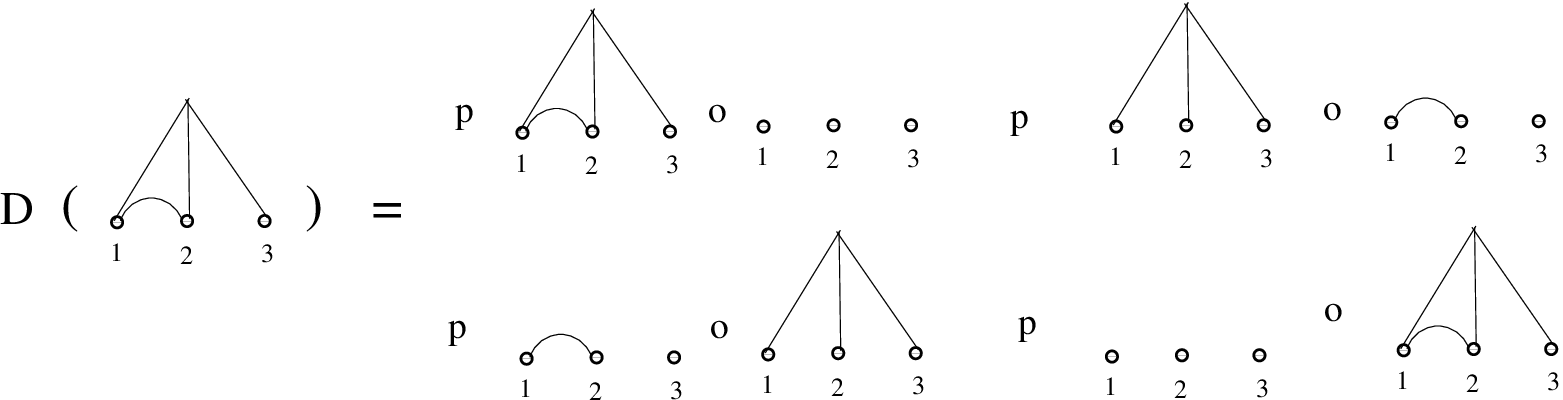}\,\, .
\caption{Example of a coproduct in ${D_n}^3$}\label{fig:coprod}
\end{figure}

%\end{center}

\vspace{.3cm}

\noindent In general for a graph $\Gamma\in {D_n}^k$ its coproduct $\Delta(\Gamma)\in {D_n}^k\otimes {D_n}^k$ is a sum of $2^c$ summands, where $c$ is the number of the connected components of the graph obtained from $\Gamma$ by removing its labeled vertices together with their small vicinities. For the graph from the above figure $c=2$. Its first connected component corresponds to the edge $12$, the second connected component corresponds to the subgraph consisting of the only internal vertex and its 3 adjacent edges. The counit  is defined as 1 on the trivial diagram without edges and internal vertices and as 0 on all the others. The morphism~\eqref{eq:operad_inclusion} is a morphism of  operads in coalgebras. Due to this morphism, ${D_n}^\bullet$ is an infinitesimal bimodule over $\HH_*(\balls_n(\bullet),\Q)$ and therefore over $\HH_0(\balls_n(\bullet),\Q)=\Comm$ as well, see Subsection~\ref{sss:inf_bimod}. Thus  ${D_n}^\bullet$ is a right $\Gamma$-module. Explicitly $\Comm$ in ${D_n}^\bullet$ is spanned by the diagrams without edges (and without internal vertices). The  infinitesimal left $\Comm$ action adds isolated label vertices. The infinitesimal right action is given by insertion of the product as in the lower part of Figure~\ref{fig:graph_compos}. It is easy to see that the $\Gamma$ structure maps respect the coalgebra structure, therefore  ${D_n}^\bullet$ is a right $\Gamma$-module in the category of coalgebras. %{\bf GREG: IS IT WORTHWHILE TO DESCRIBE THE GAMMA MODULE STRUCTURE EXPLICITLY?}

Let ${P_n}^k$ denote the primitive part of ${D_n}^k$. The space ${P_n}^k$ is spanned by the graphs with $k$ labeled external vertices that become connected if one removes all the external vertices together with their small vicinities.
The family of spaces ${P_n}^\bullet=\{{P_n}^k,\, k\geq 0\}$ is preserved by the $\Gamma$ structure maps, simply because these maps respect the coalgebra structure of ${D_n}^k$, $k\geq 0$.

%\begin{proposition}[\cite[Proposition~9.5]{Turchin}]\label{p:primitive_zigzag}
\begin{proposition}[\cite{Turchin,SevWil}]\label{p:primitive_zigzag}
For $n\geq 3$, the   right $\Gamma$-modules  $\pi_*(\balls_n(\bullet))\otimes\Q$ and ${P_n}^\bullet$  are quasi-isomorphic (by a zigzag of quasi-isomorphisms), where $\pi_*(\balls_n(\bullet))\otimes\Q$ is considered to have a zero differential.
\end{proposition}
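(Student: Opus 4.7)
The plan is to compute the cohomology of $P_n^\bullet$ explicitly via the cofree cocommutative coalgebra structure on $D_n^\bullet$ and then to compare with F.~Cohen's identification of the rational homotopy of the little discs operad. First, I would make precise the observation that, for each $k$, the DG cocommutative coalgebra $D_n^k$ is cofree connected on its primitive subspace $P_n^k$: given a graph $\Gamma\in D_n^k$ whose deletion of labels (together with small vicinities) decomposes into $c$ connected components $\gamma_1,\ldots,\gamma_c$, writing $\Gamma_i\in P_n^k$ for the graph obtained by reattaching $\gamma_i$ to the trivial diagram of labels, one has a natural presentation $\Gamma\leftrightarrow\Gamma_1\cdot\ldots\cdot\Gamma_c$ in the free symmetric coalgebra $S(P_n^k)$. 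I would verify that this yields an isomorphism $D_n^\bullet\xrightarrow{\cong}S(P_n^\bullet)$ of DG cocommutative coalgebras which is compatible with the right $\Gamma$-module structure, since the $\Gamma$-structure maps of $D_n^\bullet$ are coalgebra morphisms and hence preserve primitives and the symmetric coalgebra decomposition.

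Taking homology then yields $\HH(D_n^\bullet)\cong S\bigl(\HH(P_n^\bullet)\bigr)$ as cocommutative coalgebras and right $\Gamma$-modules. Using Proposition~\ref{p:operad_inclusion} to identify the left-hand side with $\Poiss_{n-1}$, together with the classical decomposition $\Poiss_{n-1}\cong\Comm\circ\Lie[n-1]\cong S(\Lie[n-1])$ of cocommutative coalgebras (which is compatible with the $\Comm$-infinitesimal-bimodule structure, hence with the $\Gamma$-action), I would read off an isomorphism $\HH(P_n^\bullet)\cong\Lie[n-1]$ of right $\Gamma$-modules. Combining with F.~Cohen's identification $\pi_*(\balls_n(\bullet))\otimes\Q\cong\Lie[n-1]$ recalled in Subsection~\ref{sss:operads_examples} then gives $\HH(P_n^\bullet)\cong\pi_*(\balls_n(\bullet))\otimes\Q$ as right $\Gamma$-modules.

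To promote this calculation to an actual zigzag of quasi-isomorphisms of DG right $\Gamma$-modules, I would work in the projective model structure on $\mathrm{Ch}_{\geq 0}(\mathrm{mod}{-}\Gamma)$ (the $\Gamma$-analogue of the model category recalled in Subsection~\ref{ss:hom_as_omega_maps}). Taking a cofibrant replacement $Q\xrightarrow{\sim}\pi_*(\balls_n(\bullet))\otimes\Q$ and lifting the identification on homology through a fibrant replacement of $P_n^\bullet$ produces the desired zigzag
\[
\pi_*(\balls_n(\bullet))\otimes\Q \;\xleftarrow{\sim}\; Q \;\xrightarrow{\sim}\; P_n^\bullet.
\]

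I expect the main obstacle to be the careful bookkeeping needed to verify that all identifications match compatibly as right $\Gamma$-modules --- specifically, that the $\Gamma$-action on primitives read off from the graph model of $D_n^\bullet$ agrees, under the operadic quasi-isomorphism of Proposition~\ref{p:operad_inclusion}, with the $\Gamma$-action on $\Lie[n-1]$ provided by Cohen's theorem, and that the passage from cofree coalgebras to symmetric powers really respects the $\Comm$-infinitesimal-bimodule structure on both sides. Once these compatibilities are nailed down, the remainder of the argument is essentially formal.
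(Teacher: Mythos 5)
There is a genuine gap at the very first step. The graded coalgebra isomorphism $D_n^k\cong S(P_n^k)$ that you describe is \emph{not} an isomorphism of DG coalgebras: the differential on $D_n^k$ fails to respect the decomposition into symmetric powers of primitives. Concretely, an expansion of an external vertex whose adjacent edges belong to two different ``connected components'' (in the sense of the coproduct decomposition) produces a new internal vertex that merges those components. For instance, if $\Gamma=\Gamma_1\cdot\Gamma_2$ with $\Gamma_1$ the graph with a single edge $1\text{--}2$ and $\Gamma_2$ the graph with a single edge $1\text{--}3$, then $\partial\Gamma_1=\partial\Gamma_2=0$, yet $\partial\Gamma$ is a nonzero primitive graph (a ``tripod'' with one trivalent internal vertex connected to the labels $1$, $2$, $3$). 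So $\partial$ has lower-order terms with respect to the symmetric-power filtration: $D_n^k$ is only \emph{quasi}-cofree, not cofree as a DG coalgebra. Consequently the inference $\HH(D_n^\bullet)\cong S\bigl(\HH(P_n^\bullet)\bigr)$ is false, and the subsequent identification of $\HH(P_n^\bullet)$ with $\Lie[n-1]$ by matching it against $\Poiss_{n-1}\cong\Comm\circ\Lie[n-1]$ cannot be reached by this route.

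The paper's argument uses the quasi-cofree structure in an essentially different, rational-homotopy-theoretic way: the operadic inclusion~\eqref{eq:operad_inclusion} together with formality of configuration spaces exhibits $(D_n^k)^*$ as a quasi-free CDGA model for $\balls_n(k)$, and the general principle (Quillen--Sullivan duality) that the space of indecomposables of a quasi-free CDGA model, equivalently the cogenerators of a quasi-cofree connected DG coalgebra model, computes the (dual of the) rational homotopy groups gives $\HH(P_n^k)\cong\pi_*(\balls_n(k))\otimes\Q$ without ever splitting $D_n^k$ as a DG coalgebra. The lower-order terms in the differential are precisely the information that makes this a nontrivial computation, and your proposed argument discards them. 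Once you note that $D_n^k$ is only quasi-cofree, you can no longer conclude formally as you propose; you would need to run the full rational-homotopy argument, and the explicit zigzag is then constructed as in the references cited by the paper (\cite[Theorem~9.3]{LT}, \cite[Proposition~9.5]{Turchin}) rather than read off from a coalgebra splitting.
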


For each $\bullet=k$, the morphism~\eqref{eq:operad_inclusion} is a quasi-isomorphism of differential graded cocommutative coalgebras. The configuration spaces $\balls_n(k)$ are known to be formal, thus the dual of ${D_n}^k$ is a rational model for $\balls_n(k)$. On the other hand the coalgebras ${D_n}^k$ are quasi-cofree with the space of cogenerators ${P_n}^k$. This explains why the homology of ${P_n}^k$ is $\Q\otimes\pi_*\balls_n(k)$.
The precise zigzag of quasi-isomorphisms is given in~the proof of~\cite[Theorem~9.3]{LT}, see also~\cite[Proposition~9.5]{Turchin}. The same construction was independently discovered and described in~\cite[Section~3]{SevWil}.

\begin{remark}\label{r:primitive_zigzag}
One can easily see that this zigzag respects the {\it complexity} $t$, which is the first Betti number of the graphs obtained by gluing all external vertices for the graphs from ${P_n}^\bullet$. For $\pi_*(\balls_n(\bullet))\otimes\Q$, the part of complexity $t$ is simply $\Q\otimes\pi_{t(n-2)+1}(\balls_n(\bullet))$. Moreover, for any given complexity one can also check that the zigzag of quasi-isomorphisms given in~\cite{LT} goes always through the bounded above non-negatively graded complexes.
\end{remark}

\subsubsection{Right $\Omega$-modules ${\hat D_n}{}^\bullet$, ${\hat P_n}{}^\bullet$}\label{ss:reduced_graphs}
Recall that in~\cite{PirashviliDold} Pirashvili defines a functor
$$
\CR\colon \mathrm{mod}{-}\Gamma\,\longrightarrow\,\mathrm{mod}{-}\Omega,
$$
that turns out to be an equivalence of abelian categories, see Subsection~\ref{ss:hom_as_omega_maps}.
%{\bf NEED A BETTER REFERENCE IN THE PAPER. ONE MIGHT NEED TO MAKE A Definition OF $\CR$.}
%If $X^\bullet$ is a right $\Gamma$-module, then the $k$-th component of $cr(X^\bullet)$ is the cokernel of the map
%$$
%\oplus_{i=1}^kX^{k-1}\to X^k,
%$$
%where the map on the $i$-th summand above is ${r_i}^*\colon X^{j-1}\to X^j$ corresponding through the $\Gamma^{op}$ structure to $r_i\colon \underline{j}\twoheadrightarrow \underline{j-1}$ defined as
%$$
%r_i(j)=
%\begin{cases}
%k,& \text{if $1\leq j<i$},\\
%*,& \text{$k=*$ or $i$},\\
%k-1,& \text{if $i<k\leq j$}.
%\end{cases}
%$$
Denote by ${\hat D_n}{}^\bullet$ and ${\hat P_n}{}^\bullet$ the right $\Omega$-modules $\CR({D_n}^\bullet)$ and $\CR({P_n}^\bullet)$ respectively. In each degree ${\hat D_n}{}^k$ and ${\hat P_n}{}^k$ are spanned by the same graphs as ${D_n}^k$, ${P_n}^k$ with the only restriction that all the external vertices in these graphs are of valence $\geq 1$. According to the definition of the cross-effect, ${\hat D_n}{}^k$ and ${\hat P_n}{}^k$ should be viewed as quotient spaces of ${D_n}^k$ and ${P_n}^k$, respectively (by the subspace spanned by the graphs having external vertices of valence~0). The category $\Omega$ can be viewed as a subcategory $\Gamma$ by adding to any finite set a base-point $*$. The right action of $\Omega$ is the restriction action of $\Gamma$ on these quotient spaces. For example, in the second composition in Figure~\ref{fig:graph_compos} one has to throw away the 2~graphs with external vertices of valence~0 to get the corresponding picture of the $\Omega$-action.

\begin{proposition}\label{p:injective}
The  right $\Omega$-modules ${\hat D_n}{}^\bullet$ and ${\hat P_n}{}^\bullet$ are finite-dimensional and injective in each homological degree.
\end{proposition}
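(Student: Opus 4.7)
I would treat finite-dimensionality and injectivity separately.

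\emph{Finite-dimensionality.} Fix a homological degree $d$. A graph contributing to $\hat D_n^k$ in degree $d$ has $v$ internal vertices (each of valence $\geq 3$) and $e$ edges, satisfying the orientation-degree equation and the handshake inequality
\begin{equation*}
d \;=\; (n-1)e - nv, \qquad 2e \;\geq\; 3v + k
\end{equation*}
(external vertices have valence $\geq 1$ in the hat version). Eliminating $e$ yields $(n-3)v + (n-1)k \leq 2d$. Under the stable-range assumption $n \geq 2m+2 \geq 4$ both coefficients are positive, so $v$, $k$, and hence $e$ are all bounded by $d$. Only finitely many combinatorial graphs fit these bounds, giving finite-dimensionality of $\hat D_n^\bullet$ in each homological degree. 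Since $\hat P_n^\bullet$ is a sub-$\Omega$-module of $\hat D_n^\bullet$, the same bound applies.

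\emph{Injectivity via cofree decomposition.} Call a graph \emph{reduced} if every external vertex is univalent. Each $\Gamma \in \hat D_n^l$ admits a canonical reduction $\gamma(\Gamma)$: replace every external vertex of valence $v_i$ by $v_i$ univalent vertices, one per incident edge. The new univalents carry labels $\{1, \ldots, k(\gamma)\}$ with $k(\gamma) = \sum v_i$, and there is a canonical surjection $\phi_\Gamma \colon k(\gamma) \twoheadrightarrow l$ recording which external of $\Gamma$ each split univalent came from. Passing to isomorphism classes, I expect this assignment to yield an identification of right $\Omega$-modules
\begin{equation*}
\hat D_n^\bullet \;\cong\; \bigoplus_{[\gamma]} \mathrm{coind}_{k(\gamma)} V_\gamma,
\qquad \mathrm{coind}_k V(l) \;:=\; \mathrm{Hom}_{\Sigma_k}\bigl(\Q[\mathrm{Sur}(k,l)],\, V\bigr),
\end{equation*}
where $[\gamma]$ ranges over isomorphism classes of reduced graphs (every connected component of which carries a univalent external) and $V_\gamma$ is the $\Sigma_{k(\gamma)}$-representation $\mathrm{Ind}_{\mathrm{Aut}(\gamma)}^{\Sigma_{k(\gamma)}}$ of the orientation line of $\gamma$.

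Each $\mathrm{coind}_k V$ is the right Kan extension of the $\Sigma_k$-module $V$ along $\{k\} \hookrightarrow \Omega$, hence right adjoint to the exact evaluation functor at $k$; since $\Q[\Sigma_k]$ is semisimple, every $\Sigma_k$-module is injective, and so every $\mathrm{coind}_k V$ is injective in $\mathrm{mod}{-}\Omega$. In each fixed homological degree the direct sum is finite (by the first part), and a finite sum of injectives is injective, concluding the claim for $\hat D_n^\bullet$. For $\hat P_n^\bullet$, restrict the sum to reduced $\gamma$ whose internal graph (delete univalent externals and their incident edges) is connected; this is a direct summand of the above, hence also injective.

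The main obstacle is verifying that the bijection $\Gamma \leftrightarrow (\gamma(\Gamma), \phi_\Gamma)$ intertwines the $\Omega$-actions. On the graph side a surjection $\alpha \colon l' \to l$ acts via the infinitesimal right $\Comm$-action: at each external $i$ it distributes the $v_i$ incident edges across the $|\alpha^{-1}(i)|$ duplicate vertices of $\alpha^*\Gamma$, summing over those distributions whose restriction to each duplicate is nonempty (the hat condition). On the coinduction side $\alpha$ acts by $s' \mapsto \alpha \circ s'$ on $\mathrm{Sur}(k(\gamma),-)$; the two match because the edge-distributions at external $i$ are exactly surjections $s' \colon k(\gamma) \to l'$ satisfying $\alpha \circ s' = \phi_\Gamma$. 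Tracking the Koszul signs from orientations through this identification is the technical core of the verification.
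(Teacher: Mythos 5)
Your argument is correct and follows essentially the same route as the paper: both finite-dimensionality arguments bound the number of graphs in each degree combinatorially from the valence constraints, and both injectivity arguments decompose ${\hat D_n}{}^\bullet$ (resp.\ ${\hat P_n}{}^\bullet$) into modules cofreely cogenerated by reduced graphs with all-univalent externals and observe these are injective. The paper phrases the summands as $\Omega_k^*\otimes_{\Sigma_k}M({D_n}^k)$ and proves injectivity by writing a finite-dimensional $\Sigma_k$-module as a summand of a free one (its Lemmas \ref{l:inj_mod}, \ref{l:gr_omega_isom}), which is the same statement as your right-Kan-extension/coinduction formulation once you identify $\Omega_k^*\otimes_{\Sigma_k}V\cong\mathrm{coind}_kV$ over $\Q$; your refinement of the index from $k$ to isomorphism classes of reduced graphs $\gamma$ is a harmless further decomposition of each $M({D_n}^k)$ into $\mathrm{Aut}(\gamma)$-isotypic pieces.
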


\begin{proof}

We show first that they are finite-dimensional in each homological degree, which in particular means that all except a finite number of components of the right $\Omega$-modules are trivial for any given homological degree. Let $\Upsilon\in{\hat D_n}{}^k$ be a graph with $E$ edges, $I$ internal vertices, and $k$ external vertices. To recall the complexity $t$ of $\Upsilon$ is the first Betti number of the graph obtained from $\Upsilon$ by gluing together all external vertices. So, one has
$$
t=E-I.
\eqno(\numb)\label{eq:complexity}
$$
 The total degree of $\Upsilon$ is $(n-1)E-n\cdot I=(n-1)t-I$.
 %Thus one has that the total degree is greater than or equal to $n-1$ times the complexity. Thus the complexity is less than or equal to the total degree divided by $n-1$. As a consequence it is enough to show that ${\hat D_n}{}^\bullet$ (and therefore ${\hat P_n}{}^\bullet$) is finite-dimensional in each complexity $t$.
 Since the valence of any internal vertex is $\geq 3$, and the valence of any external one is $\geq 1$, one gets
 $$
 3I+k\leq 2E.
 $$
 Which implies $I\leq \frac 23E$, $k\leq 2E$. From~\eqref{eq:complexity} one has $E=t+I\leq t+\frac 23E$, so $E\leq 3t$, $I\leq 2t$, $k\leq 6t$. This very rough estimation shows that the set of graphs $\Upsilon$ in any given complexity $t$ is finite. On the other hand for a given complexity $t$ the total homological degree of any graph is $(n-1)t-I\geq (n-3)t$. Therefore there are finitely many complexities $t$ that can produce non-trivial graphs in a given homological degree.

 Before proving the injectivity, recall~\cite{PirashviliDold} that the category $\mathrm{mod}-\Omega$ of right $\Omega$-modules in $\Q$-vector spaces has injective cogenerators $\Omega_k^*$, $k\geq 0$. First one defines the left $\Omega$-modules $\Omega_k$, $k\geq 0$, as
 $$
 {\Omega_k}(\bullet)=\Q[\mathrm{Mor}_\Omega(k,\bullet)],
 $$
 which are projective generators of the category  $\Omega{-}\mathrm{mod}$ of left $\Omega$-modules in $\Q$-vector spaces. Their duals $\Omega_k^*$ are therefore injective right $\Omega$-modules. For any right $\Omega$-module $F$ one has
 $$
 \underset{\Omega}{\Rmod}(F,\Omega_k^*)\simeq (F(k))^*,
 $$
 where $(-)^*$ denote the dual vector space. This isomorphism  is due to the Yoneda lemma. Notice that $\Omega_k^*$ has a natural action of the symmetric group $\Sigma_k$ that comes from the automorphisms of $k\in \mathrm{Obj}(\Omega)$. Given any representation $V$ of $\Sigma_k$ one can define a right $\Omega$-module  $\Omega_k^*\otimes_{\Sigma_k}V$ whose $\ell$-th component is $\Q[\mathrm{Mor}_\Omega(k,\ell)]\otimes_{\Sigma_k}V$.

 \begin{lemma}\label{l:inj_mod} For any finite-dimensional representation $V$ of $\Sigma_k$, the right $\Omega$-module $\Omega_k^*\otimes_{\Sigma_k}V$ is injective. Moreover for any right $\Omega$-module $F$, one has
 $\underset{\Omega}{\Rmod}(F,\Omega_k^*\otimes_{\Sigma_k}V)\simeq \mathrm{hom}_{\Sigma_k}(F(k),V).$
 \end{lemma}

 \begin{proof}[Proof of Lemma~\ref{l:inj_mod}] Since the ground field is $\Q$, any finite-dimensional $\Sigma_k$-module $V$ is a direct summand of a finitely generated free $\Sigma_k$-module.   Thus $V\otimes_{\Sigma_k}\Omega_k^*$ is a direct summand of a finite sum of copies of $\Omega_k^*$ and therefore is also injective. For the second statement, since $V$ is finite-dimensional, one has:
 $$
  \underset{\Omega}{\Rmod}(F,\Omega_k^*\otimes_{\Sigma_k}V)\simeq  \underset{\Omega}{\Rmod}(F,\Omega_k^*)\otimes_{\Sigma_k}V\simeq (F(k))^*\otimes_{\Sigma_k}V\simeq \mathrm{hom}_{\Sigma_k}(F(k),V).
 $$
 \end{proof}

 Now let us show that ${\hat D_n}{}^\bullet$, ${\hat P_n}{}^\bullet$ are injective in each degree. Denote by $M({D_n}^k)$, $M({P_n}^k)$ the subspaces (which are actually subcomplexes) of ${\hat D_n}{}^\bullet$, ${\hat P_n}{}^\bullet$ respectively spanned by the graphs whose external vertices are all univalent. This notation comes from the fact that these spaces are spaces of {\it multiderivations} in ${D_n}^k$, ${P_n}^k$, see~\cite[Section~10]{Turchin}. Each $M({D_n}^k)$, $M({P_n}^k)$ has a natural $\Sigma_k$ action given by relabeling the external vertices.
The following lemma finishes the proof of Proposition~\ref{p:injective}.
\end{proof}

 \begin{lemma}\label{l:gr_omega_isom}
 The  graded right $\Omega$-modules ${\hat D_n}{}^\bullet$, ${\hat P_n}{}^\bullet$  are isomorphic to $\bigoplus_{k=0}^{+\infty}\Omega_k^*\otimes_{\Sigma_k}M({D_n}^k),$ and $\bigoplus_{k=1}^{+\infty}\Omega_k^*\otimes_{\Sigma_k}M({P_n}^k),$ respectively.
 \end{lemma}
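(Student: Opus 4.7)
My plan is to construct an explicit bijection based on the natural correspondence between a graph with external vertices of arbitrary valence and a pair consisting of a graph with only univalent external vertices together with a merging rule. Define
$$
\Psi_\ell\colon \bigoplus_{k}\Omega_k^*(\ell)\otimes_{\Sigma_k}M({D_n}^k)\longrightarrow{\hat D_n}{}^\ell
$$
on basis elements as follows: given a surjection $\phi\colon k\twoheadrightarrow\ell$ and a graph $\Upsilon\in M({D_n}^k)$ with $k$ univalent external vertices labeled $1,\dots,k$, let $\Psi_\ell(e_\phi\otimes\Upsilon)$ be the graph in ${\hat D_n}{}^\ell$ obtained by fusing, for each $i\in\ell$, the univalent vertices in $\phi^{-1}(i)$ into a single external vertex labeled $i$ of valence $|\phi^{-1}(i)|\geq 1$. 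Relabeling $\Upsilon$ by $\sigma\in\Sigma_k$ and simultaneously replacing $\phi$ by $\phi\circ\sigma^{-1}$ produce the same merged graph, so $\Psi_\ell$ descends to the $\Sigma_k$-coinvariants.

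The inverse is the splitting map: given $G\in{\hat D_n}{}^\ell$ with external valences $v_1,\dots,v_\ell\geq 1$, split each external vertex $i$ into $v_i$ new univalent vertices (one per incident edge-end), choose any labeling of the resulting $k=v_1+\cdots+v_\ell$ vertices by $\{1,\dots,k\}$ to obtain $\Upsilon\in M({D_n}^k)$, and record the splitting by $\phi\colon k\twoheadrightarrow\ell$. Different labeling choices differ by a $\Sigma_k$-action that is killed in the coinvariants, so this construction is well-defined and inverse to $\Psi_\ell$, proving that $\Psi_\ell$ is a degreewise bijection.

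The main step is to verify that $\Psi$ respects the right $\Omega$-action. For a surjection $f\colon\ell'\twoheadrightarrow\ell$, the action $f^*\colon{\hat D_n}{}^\ell\to{\hat D_n}{}^{\ell'}$ inherited from the infinitesimal right $\Comm$-bimodule structure on ${D_n}^\bullet$ (see Subsection~\ref{sss:inf_bimod}) splits each external vertex $i$ into vertices labeled by $f^{-1}(i)$ and sums over all ways to distribute the edge-ends incident to $i$ among these new vertices, with distributions that leave some new vertex of valence $0$ vanishing in the quotient ${\hat D_n}{}^{\ell'}$. On the algebraic side, $f^*\colon\Omega_k^*(\ell)\to\Omega_k^*(\ell')$ is dual to postcomposition and so $f^*(e_\phi)=\sum_{\phi'\colon f\circ\phi'=\phi}e_{\phi'}$, a sum over surjective lifts of $\phi$ through $f$. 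Each such lift $\phi'$ assigns to every $j\in\phi^{-1}(i)$ --- equivalently, every edge-end at the merged vertex $i$ --- a new vertex $\phi'(j)\in f^{-1}(i)$, which is precisely one distribution of edge-ends; so applying $\Psi_{\ell'}$ to the sum over lifts reproduces the sum over distributions, and the required square commutes. The ${\hat P_n}{}^\bullet$ case follows by restriction: the condition defining $M({P_n}^k)\subset M({D_n}^k)$ --- namely that the graph becomes connected upon gluing all external vertices --- is visibly preserved by merging, and the $k=0$ term drops out since ${P_n}^0=0$.

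The main obstacle I anticipate is pinning down the explicit combinatorial formula for the right $\Omega$-action on ${\hat D_n}{}^\bullet$ out of the $\Gamma$-module structure described in Subsection~\ref{sss:inf_bimod}, together with verifying sign compatibility coming from the orderings of the orientation sets when external vertices are split. Once this splitting-with-summation dictionary is in place, the matching with the sum over lifts of $\phi$ through $f$ is essentially formal.
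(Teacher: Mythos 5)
Your proposal is correct and takes essentially the same approach as the paper: the fusion map you describe is precisely the one the paper writes down (joining the univalent external vertices of $\Upsilon$ to the $\ell$ labeled vertices according to $\alpha$), and where the paper simply asserts the result is an isomorphism of $\Omega$-modules, you supply the inverse and the equivariance check via matching edge-end distributions with surjective lifts, which is the right verification. The sign compatibility you flag as an anticipated obstacle is in fact a non-issue: the orientation set of a graph in ${D_n}^\bullet$ consists only of internal vertices and edges, neither of which is created, destroyed, or reordered when external vertices are split or merged, and both the right $\Omega$-action on $\Omega_k^*$ and the graph composition on ${D_n}^\bullet$ are stated in the paper to carry no signs, so all the maps involved are signless.
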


\begin{proof}
Below we construct isomorphisms
\begin{gather}
\bigoplus_{k=0}^{+\infty}\Omega_k^*\otimes_{\Sigma_k}M({D_n}^k)\stackrel{\simeq}{\longrightarrow}{\hat D_n}{}^\bullet,
%\eqno(\numb)
\label{eq:inj_isom1}\\
\bigoplus_{k=0}^{+\infty}\Omega_k^*\otimes_{\Sigma_k}M({P_n}^k)\stackrel{\simeq}{\longrightarrow}{\hat P_n}{}^\bullet.
%\eqno(\numb)
\label{eq:inj_isom2}
\end{gather}
To recall $\Omega_k^*$ in degree $\ell$ is a vector space whose basis is the set of surjective maps $k\twoheadrightarrow\ell$.\footnote{The right action of $\Omega$ on $\Omega_k^*$ is as follows. Given a basis element $\alpha\colon k\twoheadrightarrow \ell$ of $\Omega_k^*$ and a morphism $f\colon\ell'\twoheadrightarrow \ell$ in $\Omega$, the result of the action of $f$ on $\alpha$ is the sum (with all positive signs) of all surjections $\alpha'\colon k\twoheadrightarrow \ell'$, such that $\alpha=f\circ\alpha'$.}   Given a surjective map $\alpha\colon k\twoheadrightarrow \ell$, viewed as an element of $\Omega_k^*$,  and a graph $\Upsilon\in M({D_n}^k)$, which means $\Upsilon$ has $k$ external vertices all of valence~1, one can construct a graph in ${D_n}^\ell$ as follows: take $\Upsilon$ and take $\ell$ vertices labeled by $1\ldots \ell$, and then join each external vertex $i$ of $\Upsilon$ with the labeled vertex $\alpha(i)$:

\vspace{.3cm}

\begin{center}
\psfrag{O}[0][0][1][0]{$\bigotimes$}
\includegraphics[width=11cm]{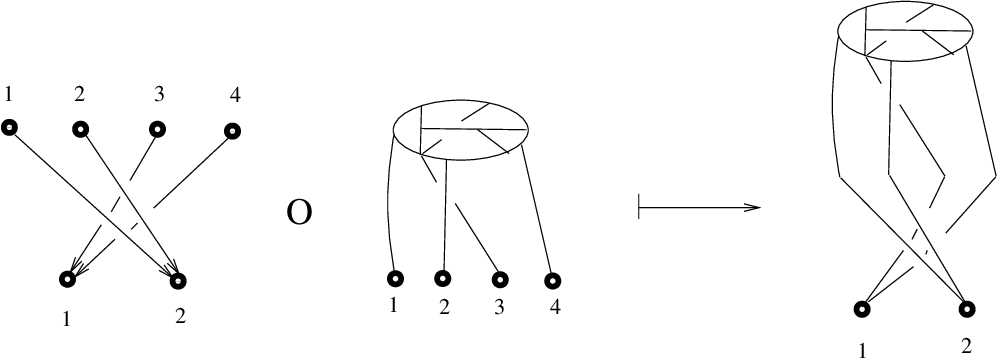},
\end{center}

\vspace{.3cm}

It is easy to see that the  maps~\eqref{eq:inj_isom1}-\eqref{eq:inj_isom2} defined as above are isomorphisms of $\Omega$-modules.
\end{proof}

We warn the reader that the isomorphisms~\eqref{eq:inj_isom1},~\eqref{eq:inj_isom2} send the differential of the source to the sum of expansions of internal vertices, which is only a part of the differential in ${\hat D_n}{}^\bullet$, ${\hat P_n}{}^\bullet$. The other part is the sum of expansion of external vertices. In other words, they are not morphisms of right $\Omega$-modules of chain complexes, but only of right modules of graded vector spaces. But the right hand sides of~\eqref{eq:inj_isom1} and~\eqref{eq:inj_isom2} do admit filtrations whose associated graded modules are isomorphic to the left hand sides. The $r$-th term of such filtration is spanned by the graphs  whose sum of valences of external vertices is $\leq r$.

\begin{remark}\label{r:loop_complexity1}
If $n$ is odd, the graphs with loops (edges connecting a vertex to itself) are canceled out by the orientation relations. For even $n$ if we quotient out ${P_n}^\bullet$ by the graphs with loops, the isomorphism~\eqref{eq:inj_isom2} fails to be true only in complexity~1, since there is only one graph
\begin{picture}(20,15)
\put(2,5){\circle*{3}}
\put(18,5){\circle*{3}}
\qbezier(2,5)(10,15)(18,5)
\put(0,-1){$_1$}
\put(16,-1){$_2$}
\end{picture}
  in $M({P_n}^\bullet)$ that can produce a loop by gluing external vertices. Because of that in complexity $\geq 2$ the graph-complex $\calEp$ can be reduced to a quasi-isomorphic complex consisting of graphs without loops.
\end{remark}

Now we finish the proof of Theorem~\ref{t:HE_Ext}.

\begin{proof}[Proof of Theorem~\ref{t:HE_Ext}]
One has
\begin{multline*}
\underset{\Omega}{\hRmod}\left(\widetilde{\HH}_*(S^{m\bullet},\Q),\hat\HH_*(\balls_n(\bullet),\Q)\right)\simeq
%\ext_{{\mathrm{mod}}-\Omega}\left(\tilde H(S^{m\bullet}),{\hat D_n}{}^\bullet\right)\simeq\\
\underset{\Omega}{\Rmod}(\widetilde{\HH}_*(S^{m\bullet},\Q),{\hat D_n}{}^\bullet))\simeq \\
\bigoplus_{k=0}^\infty\mathrm{hom}_{\Sigma_k}(\widetilde{\HH}_*(S^{mk}),M({D_n}^k)).
\end{multline*}
To recall the derived $\mathrm{hom}$ is taken in the model category $\mathrm{Ch}_{\geq 0}(\mathrm{mod}{-}\Omega)$.
The first isomorphism is due to the fact that the left-hand side can be written as a product of  $\mathrm{Ext}$ groups (in the abelian category $\mathrm{mod}{-}\Omega$ of right $\Omega$ modules in $\Q$ vector spaces)  since it is a space of derived maps between objects with trivial differential  and ${\hat D_n}{}^\bullet$ in any complexity $t$ is an injective resolution of the right $\Omega$-module $\hat\HH_{t(n-1)}(\balls_n(\bullet),\Q)$. Indeed it follows from Proposition~\ref{p:operad_inclusion} that the inclusion~\eqref{eq:operad_inclusion} is a quasi-isomorphic inclusion of right $\Omega$-modules. On the other hand  Proposition~\ref{p:injective} tells us that  ${\hat D_n}{}^\bullet$ is injective in any homological degree.
The second isomorphism is due to Lemmas~\ref{l:inj_mod} and~\ref{l:gr_omega_isom}.  Finally one can  notice that the  graded vector space $\bigoplus_k\mathrm{hom}_{\Sigma_k}(\widetilde{\HH}_*(S^{mk}),M({D_n}^k))=\bigoplus_s\mathrm{hom}_{\Sigma_s}
(\widetilde{\HH}_{ms}(S^{ms}),M({D_n}^s))$ is exactly $\calEH$ defined at the beginning of Subsection~\ref{ss:u3_graphs}. We only need to check that the differentials are   the same. Any element
$\phi\in \mathrm{hom}_{\Sigma_s}(\widetilde{\HH}_{ms}(S^{ms}),M({D_n}^s))$ in the above direct sum should be understood as an $\Omega$-module map $\phi\colon \oplus_k\widetilde{\HH}_{mk}(S^{m\bullet})\to {\hat D_n}{}^\bullet$ that sends all the summands to zero except the s-th one $\widetilde{\HH}_{ms}(S^{m\bullet})$. The latter $\Omega$-module is one dimensional and is concentrated in the $s$-th component. By Lemmas~\ref{l:inj_mod} and~\ref{l:gr_omega_isom} the map $\phi$ must send the generator of the one dimensional space $\widetilde{\HH}_{ms}(S^{ms})$ to some element  $\psi\in M({D_n}^s)\subset {\hat D_n}{}^s$. But  the part of the differential in ${\hat D_n}{}^\bullet$ that expands the external vertices must act trivially on $\psi$  (since  all the external vertices in any graph from $M({D_n}^s)$ are univalent). The other part of the differential corresponds to the expansion of internal vertices, which produces exactly the differential on $\calEH$.

The proof of~\eqref{eq:HEp_Ext} goes in the same way:
 \begin{multline*}
\underset{\Omega}{\hRmod}\left(\widetilde{\HH}_*(S^{m\bullet},\Q),\hat \pi_*(\balls_n(\bullet))\otimes\Q\right)\simeq
%\ext_{{\mathrm{mod}}-\Omega}\left(\tilde H(S^{m\bullet}),{\hat D_n}{}^\bullet\right)\simeq\\
\underset{\Omega}{\Rmod}(\widetilde{\HH}_*(S^{m\bullet},\Q),{\hat P_n}{}^\bullet)\simeq\\
\bigoplus_{k=0}^\infty\mathrm{hom}_{\Sigma_k}(\widetilde{\HH}_*(S^{mk}),M({P_n}^k)).
\end{multline*}
 But in this case even though ${\hat P_n}{}^\bullet$ is still a complex of injective $\Omega$-modules it is no more an injective resolution of $\pi_*(\balls_n(\bullet))\otimes\Q$, but is only quasi-isomorphic to it, see Proposition~\ref{p:primitive_zigzag}. However for any given complexity this quasi-isomorphism is a zigzag that goes through bounded above complexes, see  Remark~\ref{r:primitive_zigzag}.
 Thus the standard \lq\lq balancing $\ext$" argument can still be applied, see  the proof of~\cite[Theorem~2.7.6]{Weibel} and also~\cite[Exercise~10.7.1]{Weibel}.
  %prove the first isomorphism one has to evoke the model category structure~\cite{DwyerSpal} on the category of negatively graded chain complexes of right $\Omega$-modules.   The weak equivalences for this structure are as usual quasi-isomorphisms of complexes. The fibrations  are epic maps in each degree with injective kernels. The cofibrations are maps monic in every strictly negative degree. To get the negative grading we subtract from the total grading the complexity times $(n-1)$, since with a given complexity the maximal grading happens to the graphs without internal vertices. With this model category structure a result similar to~\cite[Proposition~7.3]{DwyerSpal} finishes  the proof of the first isomorphism.
  The second isomorphism and the fact that the obtained complex coincides with $\calEp$ are proven in the same way.%\footnote{GREG: DO YOU WANT TO SAY SOMETHING ABOUT THE UNBOUNDED COMPLEXES --- PROJECTIVE AND INJECTIVE MODEL STRUCTURES ON THEM?}

  A scrupulous  reader might prefer to see a product instead of a direct sum in~\eqref{eq:inj_isom1},~\eqref{eq:inj_isom2} and also in the two formulas above. But ${\hat D_n}{}^\bullet$ and ${\hat P_n}{}^\bullet$ are finite-dimensional in any homological degree (by Proposition~\ref{p:injective}) and so are complexes $\calEH$, $\calEp$, $n\geq 2m+2$ (by a similar argument). Thus in all these expressions the direct sum can be considered as a product.
\end{proof}

\section{Rational homotopy of $\Ebarmn$ in small dimensions}\label{ss:small_complex}
In this section we describe the rational homotopy of $\Ebarmn$ in complexities $t\leq 3$. The table in Subsection~\ref{ss:table_comput} summarizes these computations. Recall that we defined graph-complexes $\calEp$ computing the rational homotopy of $\Ebarmn$, see Theorem~\ref{t:HE=RH}. It is clear from the definition that up to a regrading the graph-complexes $\calEp$ depend on the parities of~$m$ and~$n$ only. The case $m=1$ was considered in~\cite[Section~9]{Turchin} which by a regrading describes the situation when $m$ is odd.  Notice that for even~$n$ the graphs with multiple edges cancel out by the orientation relations. For odd~$n$ the graphs with loops disappear by the same reason. Due to Remark~\ref{r:loop_complexity1} in complexities $\geq 2$ even when $n$ is even one can consider the reduced version of $\calEp$ spanned only by the graphs without loops.

We only give a brief summary of our computations that gives an idea of how these graph-complexes look like in small degrees.  In most of the cases the sign matters only when we check weather the corresponding graphs survive their symmetries. As for the differential in small complexities often the sign is not important --- for any choice of sign the resulting homology is the same. For this reason we do not specify how exactly the graphs are oriented, see Subsection~\ref{ss:u3_graphs} for the definition of the orientation of a graph and the differential in the graph-complex.

\subsection{Complexity~1}\label{sss:c1}
 There are only two graphs in this complexity:
 $$
 \begin{picture}(30,20)
\put(0,3){\line(2,1){30}}
\end{picture}\qquad\qquad
\begin{picture}(30,25)
\put(0,0){\line(2,1){15}}
\put(21,11.45){\circle{15}}
\end{picture}
$$
These graphs are in  Hodge degrees~2 and~1 respectively.  They survive the orientation relations and define non-trivial generators in rational homotopy according to Table~\ref{table:compl1}.
\begin{table}[!h]
\begin{center}
\begin{tabular}{|c|c|c|}
\hline
Cycle& appear when &degree\\
\hline
\begin{picture}(30,25)
\put(0,2){\line(2,1){30}}
\end{picture}& $n-m$ even& $n-2m-1$\\
\hline
\begin{picture}(30,25)
\put(0,0){\line(2,1){15}}
\put(21,11.45){\circle{15}}
\end{picture}&
$n$ even&$n-m-2$\\
\hline
\end{tabular}

\vspace{.2cm}

\end{center}

\caption{Rational homotopy generators in complexity 1}\label{table:compl1}
\end{table}

\subsection{Complexity 2}\label{sss:c2}

Due to Remark~\ref{r:loop_complexity1} one should consider only graphs without loops. Among those graphs there are only three that might survive the orientation relations, with only one graph in each of the Hodge degrees 3, 2, and~1. These graphs define non-trivial generators in rational homotopy according to Table~\ref{table:compl2}.

\begin{table}[!h]
\begin{center}
\begin{tabular}{|c|c|c|}
\hline
Cycle&appear when&degree\\
\hline
\includegraphics[width=1.5cm]{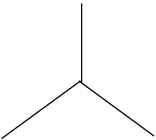}& $n-m$ odd& $2n-3m-3$\\
\hline
\includegraphics[width=2cm]{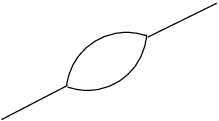}&
$m$ odd, $n$ odd&$2n-2m-4$\\
\hline
\includegraphics[width=1.5cm]{complexity2_odd_2}&$n$ odd&$2n-m-4$\\
\hline
\end{tabular}

\vspace{.2cm}

\end{center}

\caption{Rational homotopy generators in complexity 2}\label{table:compl2}
\end{table}

Notice that only the last graph is not uni-trivalent. The uni-trivalent graphs for all parities of $m$ and $n$ cancel out by the orientation relations in Hodge degree~1.

\subsection{Image of the connecting homomorphism}\label{ss:conn_hom}
One has a homotopy fibration
$$
\Ebarmn\to\Embmn\to\Omega^m\mathrm{Inj}(\R^m,\R^n),
$$
that produces a long exact sequence of homotopy groups:
\begin{multline}
\ldots\to \pi_{*+1}\Omega^m\mathrm{Inj}(\R^m,\R^n)\stackrel{{\partial_*}}{\to}\pi_*\Ebarmn\to\\
\to\pi_*\Embmn\to\pi_*\Omega^m\mathrm{Inj}(\R^m,\R^n)\to\ldots
\label{eq:long_ex_seq}
\end{multline}
Perhaps surprisingly for $n\geq 2m+2$ the image of ${\partial_*}$ in rational homotopy depends on the parities of $m$ and $n$ only. The following is an equivalent reformulation of Theorem~\ref{t:im_D*} or of Corollary~\ref{c:ranks}.

\begin{theorem}\label{t:connecting_hom}
For $n\geq 2m+2$, the image of the connecting homomorphism
$$
{\partial_*}\colon \Q\otimes \pi_{*+1}\Omega^m\mathrm{Inj}(\R^m,\R^n)\to \Q\otimes\pi_*\Ebarmn
\eqno(\numb)\label{eq:connect_hom}
$$
in rational homotopy is described by the homology of $\calEp$ spanned by the following graphs
$$
\begin{picture}(30,25)
\put(0,3){\line(2,1){30}}
\end{picture}\qquad
\begin{picture}(30,25)
\put(0,0){\line(2,1){15}}
\put(21,11.45){\circle{15}}
\end{picture}
\qquad
\includegraphics[width=1.2cm]{complexity2_odd_2}
\eqno(\numb)\label{eq:graphs_image}
$$
These classes are non-zero according to the following table:
%\begin{table}[!h]
\begin{center}
\begin{tabular}{|c|c|c|c|}
\hline
Cycle&appear when&degree&complexity\\
\hline
\begin{picture}(30,25)
\put(0,2){\line(2,1){30}}
\end{picture}& $n-m$ even& $n-2m-1$&$1$\\
\hline
\begin{picture}(30,25)
\put(0,0){\line(2,1){15}}
\put(21,11.45){\circle{15}}
\end{picture}&
$n$ even&$n-m-2$&$1$\\
\hline
\includegraphics[width=1.2cm]{complexity2_odd_2}&$n$ odd&$2n-m-4$&$2$\\
\hline
\end{tabular}
%\vspace{.2cm}
\end{center}
%\caption{Rational homotopy generators that are in the image of the connecting homomorphism}\label{table:imag_connect}
%\end{table}
\end{theorem}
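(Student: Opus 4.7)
The plan is to deduce this statement from Theorem~\ref{t:im_D*} (equivalently, Corollary~\ref{c:ranks}), using the graph-complex identification of $\Q\otimes\pi_*\Ebarmn$ provided by Theorem~\ref{t:HE=RH} to translate the description of the image into the language of graphs. The argument proceeds in three steps: compute the domain of $\partial_*$ in terms of classical rational homotopy of Stiefel manifolds; verify by a degree count that the three listed graphs are non-trivial classes of $\HH(\calEp)$ in exactly the correct total degrees and parity regimes; and then appeal to Theorem~\ref{t:im_D*} to conclude that they exhaust the image.

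For the first step, the Smale--Hirsch theorem gives $\Imn\simeq\Omega^m V_m(\R^n)$, where $V_m(\R^n)=\mathrm{Inj}(\R^m,\R^n)$ is the Stiefel manifold. Its rational homotopy groups are standard: for each $d\in\{n-m,n-m+1,\ldots,n-1\}$ one has a $\Q$-generator in degree $d$, together with an additional generator in degree $2d-1$ when $d$ is even. Looping $m$ times and applying the degree shift by $1$ of the connecting map, one obtains the domain of $\partial_*$ in degree $*$ as the rational homotopy of $V_m(\R^n)$ in degree $*+m+1$. A case-by-case analysis by the parities of $m$ and $n$ shows that precisely the generators landing in degrees $n-2m-1$, $n-m-2$ and $2n-m-4$ match the three entries of the table.

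For the second step, I would use the formula $(n-1)E-nI-ms$ for the total degree of a graph with $E$ edges, $I$ internal vertices, and $s$ external vertices, together with the orientation analysis from Subsections~\ref{sss:c1}--\ref{sss:c2}. Applied to the single edge ($E=1,I=0,s=2$), the edge-with-loop ($E=2,I=1,s=1$), and the unique complexity-$2$ Hodge-degree-$1$ graph ($E=4,I=2,s=1$), this yields total degrees $n-2m-1$, $n-m-2$ and $2n-m-4$ respectively, along with the survival conditions $n-m$ even, $n$ even, and $n$ odd. Subsections~\ref{sss:c1}--\ref{sss:c2} already establish that each of these graphs represents a nontrivial homology class in $\calEp$ in its bidegree, completing this step.

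The main obstacle is the third step: upgrading a matching of degrees into the assertion that these specific graphs actually are in the image of $\partial_*$. Here I would invoke Theorem~\ref{t:im_D*} (proved in Section~\ref{s:emb}), which describes $\mathrm{im}(\partial_*)$ at the level of the $\Omega$-module formulation of Theorem~\ref{t:hom_as_rmod}. Translating that description through the graph-complex quasi-isomorphism of Theorem~\ref{t:HE_Ext} yields exactly these three graphs in exactly these parity regimes, and the rank assertion of Corollary~\ref{c:ranks} guarantees that no further classes appear. The remaining rational-homotopy generators of $\Omega^m\mathrm{Inj}(\R^m,\R^n)$ must therefore lift to $\Q\otimes\pi_*\Embmn$, which serves as an internal consistency check rather than a separate step in the argument.
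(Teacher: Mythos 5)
Your plan is, at its core, the same as the paper's: the authors themselves state that Theorem~\ref{t:connecting_hom} is an equivalent reformulation of Theorem~\ref{t:im_D*}, and the equivalence is established inside the proof of Theorem~\ref{t:im_D*} via exactly the Stiefel-manifold computation and graph-degree matching you outline in your steps~1 and~2. However, step~3 as written has two inaccuracies that need to be corrected. First, Theorem~\ref{t:im_D*} does \emph{not} describe $\mathrm{im}(\partial_*)$ at the level of $\Omega$-modules; it is stated purely in terms of rational homotopy of the Stiefel manifold $\mathrm{Inj}(\R^m,\R^n)$ and characterizes $\mathrm{im}(D_*)$ (hence $\ker\partial_*$, via the long exact sequence), from which you only get the dimension of $\mathrm{im}(\partial_*)$ in each total degree. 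Second, the passage from ``$\partial_*\omega\neq 0$'' to ``$\partial_*\omega$ is represented by the specific graph in the table'' is not effected by the quasi-isomorphism of Theorem~\ref{t:HE_Ext}, which says nothing about a particular class in the long exact sequence; it is a uniqueness argument.

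The step you are eliding is precisely this uniqueness in bidegree, and it requires the complexity bound from the proof of Theorem~\ref{t:im_D*}: one must check that in each degree $n-2m-1$, $n-m-2$, $2n-m-4$ the graph homology $\HH(\calEp)$ has rank at most one, located in the stated complexity. The complexity-$1$ and $2$ degrees are read off Tables~A--B, and the degree estimate
\[
\deg\geq (n-3)t-(m-1)s\geq (n-m-2)t\geq 3(n-m-2) > 2n-m-5
\]
for complexity $t\geq 3$ (using $s\leq t$ from Proposition~\ref{p:trivial_in_trees} and $n>2m+1$) rules out collisions with higher complexity. Only with this bound in hand does the degree matching pin down both the image and the complexity the theorem claims, and your phrase ``translating through the graph-complex quasi-isomorphism'' does not supply it. With these two corrections your argument goes through and matches the one in Section~\ref{s:emb}.
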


We don't prove this result now. This theorem is equivalent to Theorem~\ref{t:im_D*}.
We will also see  in Section~\ref{s:emb} that the first and the second cycles come from the Euler classes of $\Q\otimes \pi_{*}\mathrm{Inj}(\R^m,\R^n)$, and the last one comes from the top Pontryagin class of $\Q\otimes \pi_{*}\mathrm{Inj}(\R^m,\R^n)$.

\subsection{Complexity 3}\label{ss:c3}

%\begin{center}
%{\it $m$ odd}
%\end{center}

The rank of the homology of $\calEp$ in complexity~3 is always two. The  homology generators are represented by the graphs shown in  Table~\ref{table:compl3}.
\begin{table}[!h]
\begin{center}
\begin{tabular}{|c|c|c|}
\hline
Cycle&appear when&degree\\
\hline
%Cycle&degree&parity\\
\includegraphics[width=1.7cm]{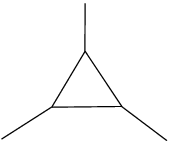}&$m$ even&$3n-3m-6$\\
\hline
\includegraphics[width=2.5cm]{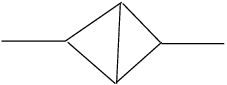}&$m$ odd& $3n-2m-7$\\
\hline
\includegraphics[width=1.7cm]{complexity3_even_hodge1}&always& $3n-m-7$\\
\hline
\end{tabular}

\vspace{.2cm}

\end{center}

\caption{Rational homotopy generators in complexity 3}\label{table:compl3}
\end{table}

Notice that the cycles depend only on the parity of $m$.
The graph-complexes actually do depend on the parity of $n$. As it follows from Remark~\ref{r:loop_complexity1}, when $n$ is even one can consider a quasi-isomorphic complex consisting of graphs without loops. It turns out that for even $n$ the only such graphs that are not canceled out by the orientation relations are those that appear in the above table. When $n$ is odd the complex is more complicated because of the presence of graphs with multiple edges. For the case $m$ odd, $n$ odd, see~\cite[Section~11]{Turchin}. The case $m$ even, $n$ odd is considered below.

%\vspace{.3cm}
%
%\begin{center}
%{\it $m$ even}
%\end{center}
%
% The rank of the homology of $\calEp$ is also 2 in this case. Table~\ref{table:compl3_m_even} describes the homology generators which are represented by one graph in each case.
%\begin{table}[!h]
%\begin{center}
%\begin{tabular}{|c|c|c|}
%\hline
%Cycle&degree&parity\\
%\hline
%\includegraphics[width=1.7cm]{complexity3_m_even_h2}& $3n-2m-7$& $n-1$\\
%\hline
%\includegraphics[width=1.7cm]{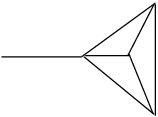}&$3n-m-7$&$n$\\
%\hline
%\end{tabular}
%
%\vspace{.2cm}
%
%\end{center}
%
%\caption{Rational homotopy generators in complexity 3 for even $m$}\label{table:compl3_m_even}
%\end{table}
%
%\vspace{2cm}

%We describe below the graph-complexes.

\vspace{.3cm}

\noindent {\it $m$ even, $n$ odd.} For the Hodge degree 4 one has two non-trivial graphs:

$$
\psfrag{d}[0][0][1][0]{$\partial$}
\includegraphics[width=5cm]{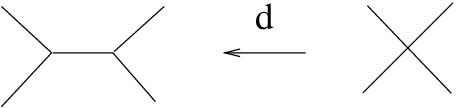}
\eqno(\numb)\label{eq:4hodge}
$$
The homology is still trivial, since $\partial(\,\includegraphics[width=0.5cm]{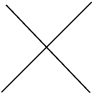}\,)=3\,\,\includegraphics[width=.8cm]{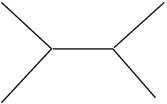}$.

\vspace{.2cm}

In the Hodge degree 3 one has the complex
$$
\psfrag{d}[0][0][1][0]{$\partial$}
\includegraphics[width=7cm]{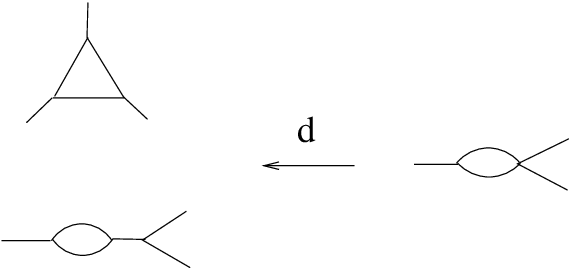}
%\eqno(\numb)\label{eq:4hodge}
$$
with the differential
$$
\psfrag{d}[0][0][1][0]{$\partial$}
\includegraphics[width=7cm]{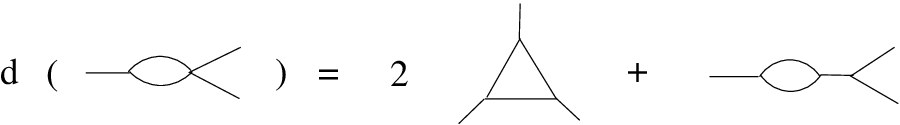}\,\, .
$$

This part of $\calEp$ produces only one-dimensional homology group spanned by the first graph from  Table~\ref{table:compl3}.

\vspace{.2cm}

In Hodge degree 2 one has the complex

$$
\psfrag{d}[0][0][1][0]{$\partial$}
\includegraphics[width=7cm]{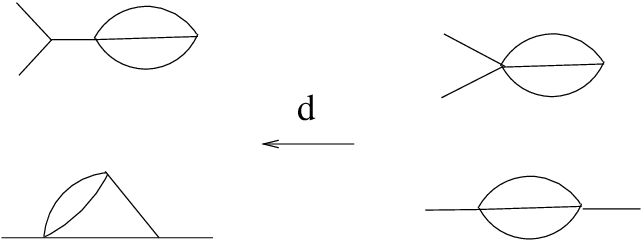}
%\eqno(\numb)\label{eq:4hodge}
$$

\noindent Which is acyclic.

\vspace{.2cm}

In the Hodge degree 1, one has the complex

\vspace{.3cm}
\begin{center}
\psfrag{d}[0][0][1][0]{$\partial$}
\includegraphics[width=10cm]{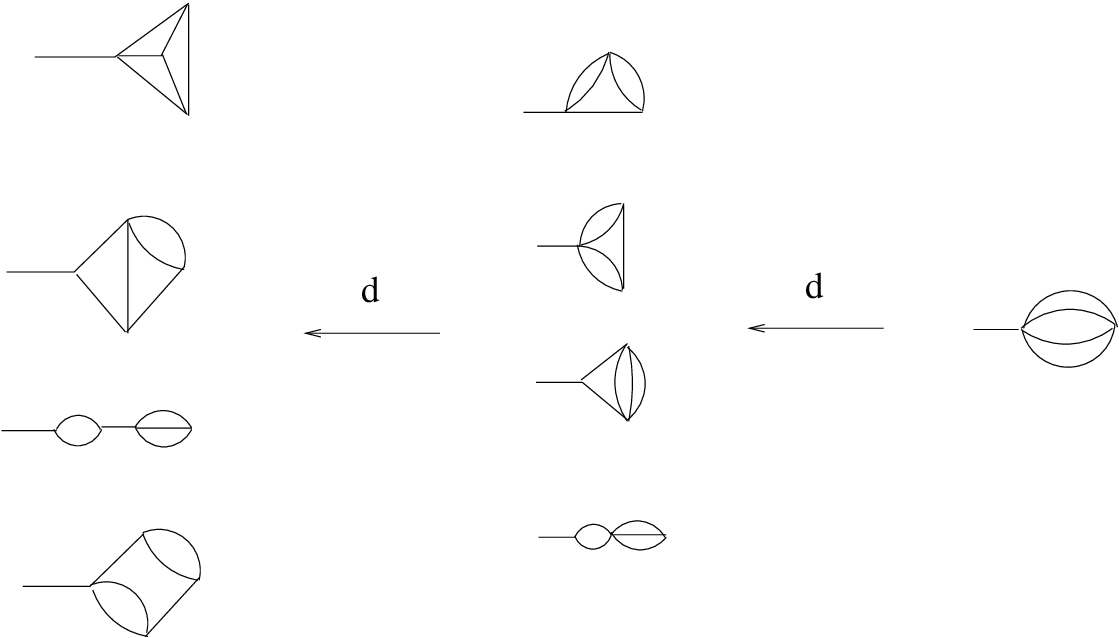}
\end{center}
\vspace{.3cm}

\noindent Which produces the last cycle from  Table~\ref{table:compl3}.

%\vspace{.5cm}
%
%\noindent {\it $m$ even, $n$ even.} This case is much simpler because graphs with multiple edges cancel out by the orientation relations in this situation.
%Due to Remark~\ref{r:loop_complexity1} one can consider only graphs without loops. It turns out that there are only two graphs without loops surviving the orientation relations. Both appear in the Table~\ref{table:compl3_m_even}.

\subsection{Subcomplex of trees or $s=t+1$}\label{ss:trees}
Denote by $\pi_*^{(s,t)}(\Ebarmn)$ the part of the rational homotopy of $\Ebarmn$ which lies in Hodge degree $s$ and complexity $t$. The summand $\calEp(s,t)$ in Hodge degree $s$ and complexity $t$ is non-trivial only if $s\leq t+1$. The case $s=t+1$ corresponds to the subcomplex of trees or graphs without cycles.

\begin{proposition}\label{p:trivial_in_trees}
For $t\geq 3$, one has $\pi_*^{(t+1,t)}(\Ebarmn)=0$ (with the usual assumption $n\geq 2m+2$).
\end{proposition}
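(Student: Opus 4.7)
The plan is to identify $\calEp(t+1,t)$ with a suitable invariants object built from the Lie operad and then invoke Klyachko's theorem to rule out nontrivial invariants when $t\geq 3$. First observe that the equality $s=t+1$ forces every contributing graph to be a tree: for a connected graph with $s$ external vertices, $I$ internal ones and $E$ edges, the complexity $t=E-I$ and the first Betti number $E-(s+I)+1$ of the underlying graph differ by $s-1$, so the graph is a tree exactly when $s=t+1$.

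By Lemmas~\ref{l:inj_mod} and~\ref{l:gr_omega_isom}, one can rewrite
$$
\calEp(t+1,t)\;\simeq\;\mathrm{hom}_{\Sigma_{t+1}}\!\bigl(\widetilde{\HH}_{m(t+1)}(S^{m(t+1)}),\,T_{\mathrm{lab}}(t+1)\bigr),
$$
where $T_{\mathrm{lab}}(k)\subseteq M({P_n}^k)$ is the subcomplex of labeled trees with $k$ labeled univalent external vertices and internal vertices of valence $\geq 3$, and the one-dimensional coefficient module is the sign representation when $m$ is odd and the trivial representation when $m$ is even. Regarding an unrooted tree together with a marked leaf as a rooted tree whose root is the marked leaf yields a $\Sigma_k$-equivariant isomorphism
$$
T_{\mathrm{lab}}(k)\otimes\Q[\{1,\ldots,k\}]\;\cong\;\mathrm{Ind}_{\Sigma_{k-1}}^{\Sigma_k}R(k-1),
$$
where $\Q[\{1,\ldots,k\}]$ is the permutation representation on the leaf labels and $R(k-1)$ is the complex of labeled rooted trees with $k-1$ leaves and internal vertices of arity $\geq 2$. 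By Koszul duality between $\Comm_+$ and $\Lie$, the homology of $R(k-1)$ equals $\Lie(k-1)$, concentrated in a single degree, up to a one-dimensional sign twist depending on the parity of $n$.

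Set $W=\HH_*(T_{\mathrm{lab}}(t+1))$ and $\epsilon=\widetilde{\HH}_{m(t+1)}(S^{m(t+1)})$. Passing to homology in the above isomorphism and then applying $\mathrm{hom}_{\Sigma_{t+1}}(\epsilon,-)$, using $\Q[\{1,\ldots,t+1\}]\cong\mathrm{Ind}_{\Sigma_t}^{\Sigma_{t+1}}\Q$ together with the projection formula and Frobenius reciprocity, yield
$$
(\epsilon\otimes W)^{\Sigma_t}\;\cong\;\mathrm{hom}_{\Sigma_t}\!\bigl(\epsilon|_{\Sigma_t},\,\Lie(t)\bigr)\;\cong\;\mathrm{hom}_{C_t}\!\bigl(\epsilon|_{C_t},\,\chi_t\bigr),
$$
where the last isomorphism invokes Klyachko's theorem $\Lie(t)\cong\mathrm{Ind}_{C_t}^{\Sigma_t}\chi_t$ for a faithful one-dimensional character $\chi_t$ of the cyclic group $C_t\subset\Sigma_t$. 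Since $\epsilon|_{C_t}$ has order at most $2$ while $\chi_t$ has order $t\geq 3$, this hom vanishes; and because $(\epsilon\otimes W)^{\Sigma_{t+1}}\subseteq(\epsilon\otimes W)^{\Sigma_t}=0$, the proposition follows.

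The main obstacle will be the careful accounting of the various sign twists --- those entering $T_{\mathrm{lab}}(k)$ from the orientation convention (and hence from the parity of $n$), those arising when declaring one leaf to be the root, and the Koszul sign in the identification of $R(k-1)$ with the operadic bar construction of the cocommutative cooperad. Every such twist amounts to tensoring by a one-dimensional character of $\Sigma_k$, so the final hom remains $\mathrm{hom}_{C_t}$ of some character of order $\leq 2$ into the faithful character $\chi_t$ of order $t\geq 3$, and the Klyachko vanishing is insensitive to these twists.
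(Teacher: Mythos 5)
Your proof is correct, and it takes a genuinely different route from the paper's. The paper identifies the homology of the tree subcomplex $M_t({P_n}^{t+1})$ with the $t$-th component of the \emph{cyclic} Lie operad (i.e., the Robinson--Whitehouse ``tree representation'' of $\Sigma_{t+1}$), up to a sign twist, and then directly invokes the known vanishing of both the $\Sigma_{t+1}$-invariants and anti-invariants of that representation for $t\geq 3$, citing both Klyachko and Robinson--White(house). You instead avoid the $\Sigma_{t+1}$-equivariant result entirely: by marking a leaf (equivalently, observing that the $\Sigma_{t+1}$-invariants injects into the $\Sigma_t$-invariants under the inclusion $\Sigma_t\subset\Sigma_{t+1}$, which is all the ``projection formula and Frobenius reciprocity'' machinery is really buying you), you reduce to the $\Sigma_t$-module $\Lie(t)$, and then apply the \emph{classical} Klyachko theorem $\Lie(t)\cong\mathrm{Ind}_{C_t}^{\Sigma_t}\chi_t$ plus the order count $\mathrm{ord}(\epsilon|_{C_t})\leq 2 < t = \mathrm{ord}(\chi_t)$. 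This is a more elementary argument in the sense that it needs only the ordinary (non-cyclic) Klyachko theorem, at the small cost of the intermediate rooted/unrooted bookkeeping; the paper's argument is shorter on the page but relies on a more specialized representation-theoretic input. Both are valid; yours also has the pedagogical advantage that the vanishing is visibly insensitive to the various one-dimensional sign twists, exactly as you note in your final paragraph.
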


\begin{proof}
Let $M_t({P_n}^{t+1})$ denote the subcomplex of $M({P_n}^{t+1})$, see Section~\ref{ss:reduced_graphs}, spanned by the trees. It is well known that the homology of this graph-complex is concentrated in the lowest degree and is described as the space of binary trees modulo $IHX$ relations~\cite{Whitehouse}. Up to a sign representation $\mathrm{sign}_{t+1}$ of the symmetric group $\Sigma_{t+1}$ this homology is isomorphic to the $t$-th component $\Lie(t)$ of the cyclic Lie operad. The subcomplex of $\calEp$ is isomorphic to
$$
M_t({P_n}^{t+1})\otimes_{\Sigma_{t+1}}(\sign_{t+1})^{\otimes m}.
$$
But it is well known, see for example~\cite{Klyachko,RobinsonWhite}, that both the invariants and anti-invariants of $\Lie(t)$ are trivial for $t\geq 3$. The result thus follows.
\end{proof}

As an example illustrating this result, the complex~\eqref{eq:4hodge} has trivial homology. Notice however that for the complexity $t=1$ and~2 one has non-trivial classes in Hodge degree $t+1$, see Tables~\ref{table:compl1}-\ref{table:compl2}.

\subsection{Subcomplex of graphs with one cycle or $s=t$}\label{ss:graphs1cycle}
It turns out that the homology of $\calEp$ can also be easily understood when the complexity $t$ equals the Hodge degree $s$, or in other words the homology of the subcomplex  $\bigoplus_{t\geq 1}\calEp(t,t)$ spanned by graphs with exactly one cycle. For any $k\geq 1$, define a $k$-wheel as a graph with $k$ univalent external vertices and $k$ trivalent internal vertices, which is obtained from a $k$-gon by adding an edge from each vertex. As an example a 5-wheel is pictured below:

$$
\includegraphics[width=3cm]{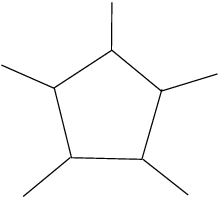}
$$

\begin{proposition}\label{p:wheels}
The homology of $\calEp(t,t)$  has rank 0 or 1 and is generated by the $t$-wheel, which is non-zero
\begin{itemize}
\item for $m$ odd, $n$ odd, if $t=2k$, $k\in\BN$;

\item for $m$ odd, $n$ even, if $t=4k-3$, $k\in\BN$;

\item for $m$ even, $n$ odd, if $t=4k-1$, $k\in\BN$;

\item for $m$ even, $n$ even, if $t=2k-1$, $k\in\BN$.
\end{itemize}
\end{proposition}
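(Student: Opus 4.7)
My plan is to reduce the one-cycle subcomplex $\bigoplus_{t\geq 1}\calEp(t,t)$ to its wheel summands by combining the tree-complex computation of Proposition~\ref{p:trivial_in_trees} with an explicit sign analysis of the dihedral automorphisms of the wheel. A connected uni-$\geq 3$-valent graph with Hodge degree equal to complexity has first Betti number one, hence consists of a unique cycle $C_k$ of some length $k\geq 1$ with rooted uni-$\geq 3$-valent trees $T_1,\ldots,T_k$ attached at the cycle vertices, the external vertices being the leaves of the $T_i$. The cases $k=1$ and $k=2$ correspond respectively to a loop and a double edge, killed by the orientation relations when $n$ is odd and when $n$ is even, respectively. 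Filtering $\calEp(t,t)$ by cycle length and noting that expansion of a vertex is cycle-length non-decreasing, the $E^0$-differential acts only on the trees; by the classical identification of uni-$\geq 3$-valent tree cohomology with the Lie operad (Whitehouse~\cite{Whitehouse}, as in the proof of Proposition~\ref{p:trivial_in_trees}) the $E^1$-page becomes a direct sum over $k$ of tensor products $\Lie(t_1)\otimes\cdots\otimes\Lie(t_k)$ with $\sum t_i=t$, equipped with the dihedral $D_k$-action on the cycle and with the $(\sign)^{\otimes m}$-twist coming from the degree $-m$ of external vertices.

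The vanishing of $\Sigma_{t_i}$-invariants and anti-invariants on $\Lie(t_i)$ for $t_i\geq 3$ (Klyachko~\cite{Klyachko}, Robinson-Whitehouse~\cite{RobinsonWhite}) then forces all surviving classes to have $t_i\in\{1,2\}$, and a further contracting homotopy obtained by sliding the unique $\Lie(2)$-generator around the cycle collapses the surviving homology onto the single summand where every $t_i=1$ and $k=t$, which is the $t$-wheel. To decide when the $t$-wheel is non-zero, I would compute the action of its dihedral automorphism group $D_t$ on the wheel's orientation data, carefully tracking that each reversal of a cycle-edge orientation contributes $(-1)^n$ via relation~(1). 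A rotation by $2\pi/t$ contributes the Koszul sign $(-1)^{(t-1)(m+n)}$, with no edge reversals, while a reflection through a vertex (for $t$ odd) produces $(-1)^{(t-1)(m+n)/2 + n}$ after combining the $t$ reversed cycle-edge orientations with the Koszul signs of the $(t-1)/2$ transpositions on each of the four orbits (external vertices, internal vertices, cycle edges, spokes). An analogous computation handles the two reflection classes for $t$ even. Enumerating the four parity combinations of $(m,n)$ and solving $(-1)^{\cdot}=+1$ produces precisely the four congruence classes of $t$ in the statement.

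The main obstacle I anticipate is the second contracting homotopy at the step collapsing the $t_i=2$ pieces onto the wheel: unlike the first vanishing, which is a purely representation-theoretic consequence of the Klyachko/Robinson-Whitehouse result, this step requires an explicit chain-level map compatible with the cyclic structure and with all the signs. A conceptually cleaner route, which I would pursue in parallel as a consistency check, is to identify $\bigoplus_{t}\calEp(t,t)$ with the one-loop part of the deformation complex $\HHHH^{m,n}_\pi$ of Subsection~\ref{ss:bicol_graphs}; the desired statement then reduces to the standard identification of the one-loop homology of the Kontsevich-type graph complex of $\Lie$ with the space of wheel graphs, together with the orientation-sign calculation above.
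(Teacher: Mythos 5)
Your sign analysis at the end---determining when all the dihedral symmetries of the $t$-wheel act by $+1$ on its orientation data---is correct and is precisely what the paper leaves to the reader (``check when this $t$-wheel survives the dihedral symmetries''); your rotation sign $(-1)^{(t-1)(m+n)}$ and the reflection signs reproduce the four listed congruence classes. The reduction to the wheel is where you diverge from the paper and where there is a gap beyond the one you yourself flag.

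The paper's reduction invokes the cut-vertex quasi-isomorphism of Conant--Gerlits--Vogtmann~\cite{ConVog}: $\calEp$ is quasi-isomorphic to its subquotient spanned by graphs without cut vertices, and among graphs of first Betti number one ($s=t$) the only such graph is the $t$-wheel. Your alternative filters $\calEp(t,t)$ by cycle length. The filtration itself is legitimate (expansion is cycle-length non-decreasing), but your identification of the first page with $\Lie(t_1)\otimes\cdots\otimes\Lie(t_k)$ is incorrect. The subcomplex sitting over each cycle vertex $v_i$ is not the uni-$\geq 3$-valent tree complex to which Whitehouse's theorem applies: $v_i$ already carries two cycle half-edges, so it needs only $\geq 1$ further half-edges, and in particular cycle vertices of valence $\geq 4$ occur and are not captured by ``uni-$\geq 3$-valent trees attached at $v_i$.'' A concrete failure: for $t=3$, $k=2$, $(t_1,t_2)=(2,1)$, the associated graded piece is spanned by exactly two graphs --- one whose $4$-valent cycle vertex carries both spokes directly, and one where that vertex is trivalent with a trivalent off-cycle child carrying the two spokes --- and the cycle-length-preserving expansion of the $4$-valent vertex sends the first to the second, so the $E^1$-page vanishes there. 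By contrast your formula predicts $\Lie(2)\otimes\Lie(1)\cong\Q$, which Klyachko--Robinson--Whitehouse (effective only for $t_i\geq 3$) does not remove, leaving you needing exactly the ``second contracting homotopy'' you admit you do not have. The correct picture, at least as seen in this example, is that the $E^0$-subcomplex attached to any cycle vertex with $t_i\geq 2$ leaves is already acyclic, so the collapse onto the wheel is an $E^1$-level phenomenon and the Klyachko step never becomes the relevant ingredient.

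Your parallel suggestion --- identifying $\bigoplus_t\calEp(t,t)$ with the one-loop part of $\HHHH^{m,n}_\pi$ and citing wheel classes in a Kontsevich-type graph complex --- is plausible and genuinely different from both routes above, but it requires you to verify that the quasi-isomorphisms underlying Section~\ref{s:koszul} respect the loop filtration, which the paper does not spell out. As written, your reduction step is not established.
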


Notice that 1, 2, and 3-wheels appeared already in our computations. Since we are not going to use this result, we explain very briefly the way it can be proved.

\begin{proof}[Idea of the proof] The idea is that one can reduce $\calEp$ to a smaller complex of graphs without cut-vertices~\cite{ConVog}. The argument is similar to the one given in the above reference. For the case $s=t$ the only such graph is the $t$-wheel. One only needs to check when this $t$-wheel survives the dihedral symmetries.
\end{proof}

\begin{remark}\label{r:consist_comp}
The results of Subsections~\ref{sss:c1}-\ref{ss:graphs1cycle} are consistent with the computations of the Euler characteristics presented in Appendix in Tables~\ref{table:pi_oo},~\ref{table:pi_oe},~\ref{table:pi_eo},~\ref{table:pi_ee}. All these computations are also consistent with the previous computations of the rational homotopy of $\Ebar_c(\R,\R^n)$, see~\cite[Appendix~B, Tables~5-6]{T-OS}.
\end{remark}

Recently the second author together with Baltazar Chavez-Diaz, James Conant,  Jean Costello, and Patrick Weed computed the homology of the subcomplex of $\calEp$ of graphs with two cycles ($s=t-1$). The results of these computations show that the ranks of the homology of this part of the graph-complex grow linearly with~$t$. We believe the following is true.

\begin{conjecture}\label{c:asymp}
For $\ell$ fixed, the rank of $\pi_*^{(t-\ell,t)}(\Ebarmn)$ has asymptotic  $t^\ell$ when $t\to +\infty$.
\end{conjecture}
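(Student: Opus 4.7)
The plan is to approach Conjecture~\ref{c:asymp} by exploiting the special status of trivalent graphs in $\calEp$. First I would observe that the differential vanishes on the subspace of graphs whose internal vertices are all trivalent, since the expansion formula $2^{\ell-1}-\ell-1$ equals $0$ for valence $\ell=3$. For a trivalent $\Upsilon\in\calEp(t-\ell,t)$ one is forced to have $I=t+\ell$ internal vertices and $E=2t+\ell$ edges, so such graphs sit at the lowest homological degree of $\calEp(t-\ell,t)$. A filtration by the number of internal vertices then yields a spectral sequence whose $E^1$ page identifies the trivalent layer with $\ker\partial$ in the bottom degree, while higher layers contribute only via boundaries landing in it.

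Second, I would give a core-plus-hair decomposition of the trivalent layer. Iteratively deleting univalent vertices and contracting bivalent ones produces the core $K$, a $2$-edge-connected trivalent graph with $b_1(K)=\ell+1$, hence with exactly $2\ell$ vertices and $3\ell$ edges; such cores form a finite list for fixed $\ell$. Each core edge $e$ is replaced by a trivalent tree $T_e$ with two distinguishable attachment endpoints and $s_e$ external leaves, satisfying $\sum_e s_e=s=t-\ell$; a direct Euler characteristic check forces $T_e$ to have exactly $s_e$ internal vertices. Up to $\mathrm{Aut}(K)$-coinvariants and the parity-dependent sign representation on the orientation set, the trivalent chain-layer thus splits as a direct sum over cores $K$ of tensor products, indexed by edges of $K$, of edge-decoration vector spaces.

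Third, for the lower bound I would produce explicit families of cycles by fixing a favorite core $K$ (say a theta-type graph for $\ell=1$, $K_4$ for $\ell=2$, etc.) and varying the multi-distribution $(s_e)_{e\in E(K)}$ of hairs. The number of such distributions with $\sum_e s_e=s$ is $\binom{s+3\ell-1}{3\ell-1}$, a polynomial of degree $3\ell-1$ in $s$; even after crushing by $\mathrm{Aut}(K)$ and by orientation relations, the residual count should be seen to contain $\Omega(t^\ell)$ classes that are linearly independent in homology, via a cyclic-Lie argument in the spirit of Proposition~\ref{p:trivial_in_trees}. For the matching upper bound I would study the image of $\partial$ coming from the next layer $I=t+\ell-1$ (graphs with one $4$-valent internal vertex), identify its cokernel through edge-contraction arguments with a small combinatorial model, and bound its rank by $O(t^\ell)$.

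The main obstacle is closing the gap between the chain-level polynomial count, which is of degree $3\ell-1$ after symmetries and even exponential before them, and the conjectured homology growth of degree $\ell$: the differential $\partial$ issuing from just the one-$4$-valent layer must provide enough relations to collapse roughly $2\ell$ further degrees. I expect the heart of the proof to be a clean identification of each edge-decoration space, after incorporating these boundary relations, with a polynomial-growth representation --- plausibly the $\Sigma_{s_e}$-(co)invariants of $\Lie(s_e+2)\otimes\sign^{\otimes m}$ or a close relative. Assembling these edge contributions, summing over hair-distributions subject to $\sum_e s_e=s$, and then over the finite list of cores, should finally yield the conjectured asymptotic $t^\ell$.
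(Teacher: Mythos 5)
The statement you were assigned is Conjecture~\ref{c:asymp}: the paper presents it as an \emph{open problem}, not a theorem, and offers only evidence --- Proposition~\ref{p:wheels} gives bounded rank for $\ell=0$, unpublished computations of the second author with Chavez-Diaz, Conant, Costello and Weed give linear growth for $\ell=1$, and the cited theorem of Dasbach gives quadratic growth for $3$-loop uni-trivalent graphs modulo $IHX$, which is the $\ell=2$ case. There is no proof in the paper to compare your attempt against, so a correct argument here would be a new result rather than a recovered one.

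Your outline is a reasonable first plan, and the core-plus-hairs reduction is in the spirit of the cut-vertex reduction cited in the proof of Proposition~\ref{p:wheels}; the numerology $I=t+\ell$, $E=2t+\ell$ for the uni-trivalent stratum is correct, as is the observation that $\calEp(s,t)$ is graded by $I$ with $\partial$ raising $I$ by one. But there are concrete gaps beyond the one you flag. First, you implicitly reduce to the uni-trivalent layer by asserting that the other strata ``contribute only via boundaries landing in it,'' i.e.\ that the homology of $\calEp(t-\ell,t)$ is concentrated at $I=I_{\max}=t+\ell$. That is false already for small $t$: the last generator in Table~\ref{table:compl3} has $t=3$, $s=1$ (so $\ell=2$) and degree $3n-m-7$, which forces $I=4$, $E=7$; the uni-trivalent layer for these $(s,t)$ sits at $I=5$, $E=8$, degree $3n-m-8$. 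So there is genuine homology one degree below the trivalent stratum, and your argument would at best control the top stratum. Second, your lower bound stops at a chain-level count and never shows that $\Omega(t^\ell)$ of the hair-distribution classes survive the boundaries arriving from the one-$4$-valent-vertex layer and remain linearly independent in homology; the phrase ``cyclic-Lie argument in the spirit of Proposition~\ref{p:trivial_in_trees}'' is a placeholder, not an argument. Third, your upper bound is a stated goal with no supporting reasoning at all. Finally, the obstacle you correctly identify --- that the chain count grows polynomially of degree $3\ell-1$ (indeed exponentially before symmetrizing) while the conjecture predicts degree $\ell$ --- is precisely the mathematical content of the open problem, and you do not propose a mechanism that actually produces those roughly $2\ell$ further degrees of relations. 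As written this is a plan of attack, not a proof, and the conjecture remains open.
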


As another confirmation for this conjecture, the ranks of the  spaces of 3-loop uni-trivalent graphs   modulo $IHX$ relations (see Subsection~\ref{ss:u3_graphs}) grow quadratically with the number of univalent vertices in the graphs~\cite{Dasbach}.\footnote{These spaces are non-trivial only when the number of univalent vertices in 3-loop graphs is even~\cite{MoskOhtsuki}.}

\subsection{Recollecting results of computations}\label{ss:table_comput}

The table below resumes the results of computations from~\ref{sss:c1}-\ref{ss:graphs1cycle} and describes the dimensions of the first few linearly independent generators of $\Q\otimes\pi_*(\Ebarmn)$, $n>2m+1$. To obtain a similar description of the rational homotopy of $\mathrm{Emb}_c(\R^m,\R^n)$, $n>2m+1$, in small dimensions one can use Corollary~\ref{c:ranks}.

\vspace{.4cm}

\begin{center}
\begin{tabular}{|c|l|c|}
\hline
Case& Dimensions of rational homotopy&Other generators\\
&generators& have dimension\\
\hline
$m$ odd, $n$ odd& $n-2m-1$, $2n-2m-4$, $2n-m-4$,& $\geq 4n-3m-9$\\
                & $3n-2m-7$, $3n-m-7$, $4n-4m-8$& \\
                \hline
$m$ odd, $n$ even& $n-m-2$, $2n-3m-3$, $3n-2m-7$,& $\geq 4n-3m-9,$\\
                 & $3n-m-7$&                 \\
                 \hline
$m$ even, $n$ odd& $2n-3m-3$, $2n-m-4$, $3n-3m-6$,& $\geq 4n-3m-9$\\
                 & $3n-m-7$ & \\
                 \hline
$m$ even, $n$ even& $n-2m-1$, $n-m-2$, $3n-3m-6$,& $\geq 4n-3m-9$\\
                  & $3n-m-7$     &    \\
                  \hline
\end{tabular}

%\vspace{.2cm}
\end{center}

\section{Rational homotopy of $\Embmn$}\label{s:emb}
It is natural to ask ourselves: \lq\lq Is it possible to reconstruct from $\Q\otimes \pi_*(\Ebarmn)$ the rational homotopy of the initial embedding space $\Embmn$?" To answer this question it is enough to know the rational homotopy of $\Omega^m\mathrm{Inj}(\R^m,\R^n)$ (which is the homotopy of $\mathrm{Inj}(\R^m,\R^n)$ shifted by $m$), and the image of the morphism
$$
\Q\otimes\pi_*\Embmn\stackrel{D_*}{\longrightarrow}\Q\otimes\pi_*\Omega^m\mathrm{Inj}(\R^m,\R^n)
\eqno(\numb)\label{eq:D*}
$$
induced by the Smale-Hirsch map $D$. The space $\mathrm{Inj}(\R^m,\R^n)$ is  the Stiefel variety of isometric linear injections of $\R^m$ into $\R^n$. The rational homotopy of $\mathrm{Inj}(\R^m,\R^n)$ is finite-dimensional, see Theorem~\ref{t:hom_steif}. As a consequence up to a finite-dimensional correction the rational homotopy of $\Embmn$ and of $\Ebarmn$ is the same. We mention also that the rational homology of $\Embmn$, $n\geq 2m+2$, is a polynomial bialgebra generated by the rational homotopy. This follows from the fact that $\Embmn$, $n\geq 2m+2$,  is a double loop space. For $m\geq 2$ it is straightforward, since the spaces $\Embmn$, $n\geq 2m+2$, are connected and have an obvious action of the operad of $m$-cubes. For $m=1$ this result is due to Salvatore~\cite{Salvatore}.

\begin{theorem}\label{t:im_D*}
Let $n\geq 2m+2$. A non-zero element of $\Q\otimes\pi_*\Omega^m\mathrm{Inj}(\R^m,\R^n)$ is in the image of~\eqref{eq:D*} if and only it is both of degree $>2n-3m-4$ and  in the image of $(\Omega^mi)_*$, where $i$ is the natural inclusion
$$
\mathrm{Inj}(\R^{m-1},\R^{n-1})\stackrel{i}{\hookrightarrow}\mathrm{Inj}(\R^m,\R^n).
\eqno(\numb)\label{eq:st_inclus}
$$
\end{theorem}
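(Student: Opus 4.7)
The strategy is to analyze the long exact sequence~\eqref{eq:long_ex_seq} after tensoring with~$\Q$. This identifies the image of the Smale--Hirsch map~\eqref{eq:D*} with the kernel of the next connecting homomorphism
\[
\partial\colon\Q\otimes\pi_*\Omega^m\mathrm{Inj}(\R^m,\R^n)\longrightarrow\Q\otimes\pi_{*-1}\Ebarmn.
\]
Theorem~\ref{t:connecting_hom} already pins down $\mathrm{Image}(\partial)$: at most three graph classes, appearing in target degrees $n-2m-1$, $n-m-2$, $2n-m-4$, each subject to a parity condition. Thus $\partial$ is forced to vanish outside the three source degrees $n-2m$, $n-m-1$, $2n-m-3$, and the task reduces to identifying the candidate source classes in those degrees and checking which of them are hit by $(\Omega^m i)_*$.

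I would describe $\Q\otimes\pi_*V_m(\R^n)$ via the iterated Stiefel tower, using at each step the fibration $V_{j-1}(\R^{n-m+j-1})\hookrightarrow V_j(\R^{n-m+j})\twoheadrightarrow S^{n-m+j-1}$ together with the fact that $\Q\otimes\pi_*S^k$ is concentrated in degree $k$ (and, for $k$ even, also in degree $2k-1$ via the Whitehead square). Since $i$ respects the tower and drops only the top sphere $S^{n-1}$, $\mathrm{Image}(i_*)$ consists precisely of the classes inherited from $S^{n-2},\ldots,S^{n-m}$. Matching to the three degrees: degree $n-m-1$ (present when $n$ is even) is hit by the fundamental class of $S^{n-1}$, which lies \emph{outside} $\mathrm{Image}((\Omega^m i)_*)$; degree $2n-m-3$ (present when $n$ is odd) is hit by its Whitehead square, also outside; degree $n-2m$ (present when $n-m$ is even) is hit by the fundamental class of the bottom sphere $S^{n-m}$, which \emph{does} lie in $\mathrm{Image}((\Omega^m i)_*)$. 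A direct check gives $n-2m\le 2n-3m-4$ whenever $n\ge 2m+2$, so this last class sits at or below the cutoff.

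To conclude I must show that $\partial$ is actually non-zero on each of the three candidates rather than on some linear combination. The cleanest route is a rank count: under the relevant parity hypotheses the three graph classes of Theorem~\ref{t:connecting_hom} genuinely span $\mathrm{Image}(\partial)$, and the three candidate source classes are the only possibilities in the corresponding degrees of $\Q\otimes\pi_*\Omega^m\mathrm{Inj}(\R^m,\R^n)$, so each candidate maps to a non-zero multiple of its matching graph. (The remark at the end of Subsection~\ref{ss:conn_hom} identifies these classes with Euler classes and the top Pontryagin class of the Stiefel manifold.) Combining, $\ker\partial=\mathrm{Image}((\Omega^m i)_*)\cap\{\deg>2n-3m-4\}$, which is the statement of the theorem. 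The main obstacle is the concrete matching of source classes to graph cycles: $\calEp$ computes $\pi_*\Ebarmn$ only through the chain of quasi-isomorphisms of Section~\ref{s:graphs}, so transferring the Euler/Whitehead/Pontryagin data through that chain---or equivalently giving a direct geometric interpretation of $\partial$ as a first-order obstruction to deforming an immersion to an embedding---is the least automatic part of the argument.
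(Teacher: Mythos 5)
Your proposal has a genuine circularity. You cite Theorem~\ref{t:connecting_hom} as given input (``Theorem~\ref{t:connecting_hom} already pins down $\mathrm{Image}(\partial)$''), and again in the final paragraph (``the three graph classes of Theorem~\ref{t:connecting_hom} genuinely span $\mathrm{Image}(\partial)$''). But the paper explicitly states, immediately after Theorem~\ref{t:connecting_hom}, ``We don't prove this result now. This theorem is equivalent to Theorem~\ref{t:im_D*},'' and in the proof of Theorem~\ref{t:im_D*} it records ``the above analysis shows that Theorem~\ref{t:im_D*} is equivalent to Theorem~\ref{t:connecting_hom}.'' So neither result is available as an independent lemma before the proof of Theorem~\ref{t:im_D*}; the argument in Section~\ref{s:emb} establishes both simultaneously. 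Citing one to prove the other is circular. The reference you make to the remark at the end of Subsection~\ref{ss:conn_hom}, which identifies the three graph cycles with Euler and top Pontryagin classes, has the same problem: that remark explicitly defers its justification to the proof in Section~\ref{s:emb}.

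The paper's actual route avoids this by never needing the identification of graph cycles with characteristic classes. One direction (``only if'') follows from Budney's two results: $\Embmn$ is $(2n-3m-4)$-connected, and $D$ factors through $\Omega^m\mathrm{Inj}(\R^{m-1},\R^{n-1})$. For the other direction, take a class $\omega$ of degree $>2n-3m-4$ in the image of $(\Omega^m i)_*$ (necessarily a non-top Pontryagin class of degree $4\ell-1-m$) and \emph{assume} $\partial_*\omega\neq 0$. The paper then rules this out complexity by complexity without ever saying which cycle $\partial_*\omega$ would equal: in complexity~$1$ and~$2$ the available classes are accounted for by degree and parity constraints, and in complexity~$\geq 3$ the Euler--edge count for graphs in $\calEp$ combined with $s\leq t$ (Proposition~\ref{p:trivial_in_trees}) gives $\deg\partial_*\omega\geq 3(n-m-2)$, while $\deg\partial_*\omega\leq 2n-m-6$ (or $2n-m-8$), a contradiction once $n>2m+1$. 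That degree-counting argument is the central non-automatic step your proposal is missing; you correctly flag at the end that you haven't done it (``the main obstacle is the concrete matching of source classes to graph cycles''), but the issue is worse than a missing calculation --- the structure you propose presupposes the very facts you would have to establish by such a calculation.

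A secondary but fixable point: your description of the degree-$2n-m-3$ candidate as the Whitehead square of $S^{n-1}$ is not what the paper uses; it is the top Pontryagin class in $\pi_{2n-3}\mathrm{Inj}(\R^m,\R^n)\otimes\Q$, which is Whitehead-square-like only for $m=1$ (that case is treated separately in the paper's proof).
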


\begin{proof}
R.~Budney proved that $\Embmn$ is $(2n-3m-4)$-connected~\cite[Proposition~3.9]{Budney2}. In the same paper he showed that the map $D$ up to homotopy factors through $\Omega^m\mathrm{Inj}(\R^{m-1},\R^{n-1})$~\cite[Theorem~2.5]{Budney2}:
$$
\xymatrix{
\Embmn\ar[rr]^D\ar[rd]_{hol}&&\Omega^m\mathrm{Inj}(\R^m,\R^n)\\
&\Omega^m\mathrm{Inj}(\R^{m-1},\R^{n-1})\ar[ru]_{\Omega^mi}&
}
$$
Thus to prove the above theorem, one only needs to show that every class $\omega\in\Q\otimes\pi_*\Omega^m\mathrm{Inj}(\R^m,\R^n)$, such that $\deg\,\omega>2n-3m-4$ and $\omega\in {\mathrm Im}\,
(\Omega^mi)_*$, does appear in ${\mathrm{Im}}\, D_*$. Notice that if $\omega\notin {\mathrm{Im}}\, D_*$, then ${\partial_*}\omega\neq 0$, where ${\partial_*}$ is the connecting homomorphism~\eqref{eq:connect_hom}. One also has
$$
\deg\, {\partial_*}\omega=\deg\,\omega-1.
$$

The end of the proof will go as follows. Assuming ${\partial_*}\omega\neq 0$ we will show that $\deg{\partial_*}\omega$ is too big to make ${\partial_*}\omega$ appear in complexity~1, too small to make it appear in complexity~$\geq 3$, and its parity does not match to appear in complexity~2. The space $\mathrm{Inj}(\R^m,\R^n)$ is a fibered product $S^{n-1}\ltimes S^{n-2}\ltimes\ldots\ltimes S^{n-m}$, and moreover the inclusion~\eqref{eq:st_inclus} is
$$
\xymatrix{
\mathrm{Inj}(\R^{m-1},\R^{n-1})\ar@{^{(}->}[r]\ar@{=}[d]&\mathrm{Inj}(\R^{m},\R^{n})\ar@{=}[d]\\
S^{n-2}\ltimes\ldots\ltimes S^{n-m}\ar@{^{(}->}[r]&S^{n-1}\ltimes S^{n-2}\ltimes\ldots\ltimes S^{n-m}}
$$

The rational homotopy of $\mathrm{Inj}(\R^{m},\R^{n})$ is described by the following well known result whose proof we will sketch.

\begin{theorem}\label{t:hom_steif}
Assuming $n>2m\geq 2$, one has
\begin{itemize}
\item for $m$ odd, $n$ odd:
$$
\Q\otimes\pi_*\mathrm{Inj}(\R^m,\R^n)=
\begin{cases}
\Q,& *=n-m \text{ or $2n-3-4k$, $0\leq k\leq \frac{m-1}2$;}\\
0,& \text{otherwise}.
\end{cases}
$$

\item for $m$ odd, $n$ even:
$$
\Q\otimes\pi_*\mathrm{Inj}(\R^m,\R^n)=
\begin{cases}
\Q,& *=n-1 \text{ or $2n-5-4k$, $0\leq k\leq \frac{m-3}2$;}\\
0,& \text{otherwise}.
\end{cases}
$$

\item for $m$ even, $n$ odd:
$$
\Q\otimes\pi_*\mathrm{Inj}(\R^m,\R^n)=
\begin{cases}
\Q,& *=\text{$2n-3-4k$, $0\leq k\leq \frac{m-2}2$;}\\
0,& \text{otherwise}.
\end{cases}
$$

\item for $m$ even, $n$ even:
$$
\Q\otimes\pi_*\mathrm{Inj}(\R^m,\R^n)=
\begin{cases}
\Q,& *=n-1, \text{ $n-m$, or $2n-5-4k$, $0\leq k\leq \frac{m-2}2$;}\\
0,& \text{otherwise}.
\end{cases}
$$
\end{itemize}
\end{theorem}

The classes of degree $n-1$ and $n-m$ will be called Euler classes, the other classes will be called Pontryagin classes. Notice that Pontryagin classes have degrees $4\ell-1$, $\ell\in\BN$. A similar statement is true in the range $n>m\geq 1$, but if $n\leq 2m$ the Euler and Pontryagin classes can lie in the same degrees, which makes more difficult to formulate the result.

\begin{proof}[Sketch of the proof]
One has a fibration
$$
SO(n-m)\stackrel{\iota}{\to}SO(n)\stackrel{p}{\to}\mathrm{Inj}(\R^{m},\R^{n}).
$$
The rational homotopy of $\mathrm{Inj}(\R^{m},\R^{n})$ can be split into a direct sum of the image of $p_*$ (containing all the Pontryagin classes of $\mathrm{Inj}(\R^{m},\R^{n})$ and the Euler class of degree $n-1$, appearing if $n$ is even) and a space transversal to $\mathrm{Im}\,p_*$ which gets mapped by a connecting homomorphism ${\partial_*}$ isomorphically on $\mathrm{Im}\,{\partial_*}$ (this space has dimension zero or one and is generated by the Euler class of degree $n-m$, appearing if $n-m$ is even). Theorem~\ref{t:hom_steif} is easily proved by a careful study of the map $\iota_*$ in rational homotopy. As we already mentioned $\mathrm{Inj}(\R^{m},\R^{n})$ is a fibered product of spheres:
$$
\mathrm{Inj}(\R^{m},\R^{n})=S^{n-1}\ltimes S^{n-2}\ltimes \ldots\ltimes S^{n-m}
\eqno(\numb)\label{eq:fib_prod}
$$
The Pontryagin class of degree $4\ell-1$ appears either from the factor $S^{2\ell}\ltimes S^{2\ell-1}$ of~\eqref{eq:fib_prod} or from the Hopf class of the last sphere if $n-m=2\ell$.  The Euler class of degree $n-1$ comes from the first sphere of~\eqref{eq:fib_prod} and it appears only if $n$ is even. The Euler class of degree $n-m$ corresponds to the last sphere of~\eqref{eq:fib_prod} and it appears only if $n-m$ is even. The case $m=1$ fits into Theorem~\ref{t:hom_steif}: when $n$ is even the only sphere $S^{n-1}$ is treated as the first sphere in~\eqref{eq:fib_prod}; when $n$ is odd it is treated as the last one.
\end{proof}

We now return to the proof of Theorem~\ref{t:im_D*}. We first list the classes in $\Q\otimes \pi_*\Omega^m\mathrm{Inj}(\R^{m},\R^{n})$ that can not lie in $\mathrm{Im}\, D_*$ by Budney's result. Basically these are the Euler classes (since their dimensions are too small) and the top Pontryagin class when $n$ is odd (since it is not in the image of $i_*$).  When $n$ is even the only class which is not in the image of $i_*\colon\Q\otimes\pi_*\mathrm{Inj}(\R^{m-1},\R^{n-1})\to\Q\otimes \pi_*\mathrm{Inj}(\R^{m},\R^{n})$ is the Euler class of degree $n-1$. Via the image of ${\partial_*}$ it appears as a class of degree $n-m-2$ in $\Q\otimes\pi_*\Ebarmn$. The shift of degree by $m$ comes from $m$ loops. This class corresponds to the graph
$$
\begin{picture}(30,25)
\put(0,0){\line(2,1){15}}
\put(21,11.45){\circle{15}}
\end{picture}
$$
\noindent from~\eqref{eq:graphs_image}.

When $n$ is odd and $m\geq 2$ there is only one class not lying in ${\mathrm{Im}}\, i_*$ which is the top Pontryagin class whose degree is $2n-3$. Via the connecting homomorphism it appears as a class of degree $2n-m-4$ in $\Q\otimes\pi_*\Ebarmn$. The corresponding graph from~\eqref{eq:graphs_image} is
$$
\includegraphics[width=1.2cm]{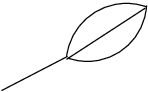}.
$$

For $n$ odd and $m=1$, there are two classes not in the image of $i_*$ (which is a zero map since the source is zero): the Pontryagin class of degree $2n-1$ and the Euler class of degree $n-1$. The corresponding graphs are
$$
\begin{picture}(30,25)
\put(0,2){\line(2,1){30}}
\end{picture},
\qquad
\includegraphics[width=1.2cm]{complexity2_odd_2.eps}.
$$

Assuming $n\geq 2m+2$, the Euler class of degree $n-2m$ in $\Q\otimes \pi_*\Omega^m\mathrm{Inj}(\R^{m},\R^{n})$ can not lie in ${\mathrm{Im}}\, D_*$ since by Budney's result~\cite[Proposition~3.9]{Budney2} the space $\Embmn$ is $(2n-3m-4)$-connected and
$$
n-2m>2n-3m-4
$$
is possible only if $n=4$ and $m=1$, but the case $m=1$ excludes the Euler class from the image of $D_*$ by the previous argument. Via the connecting homomorphism this class produces a generator of degree $n-2m-1$ in $\Q\otimes\pi_*\Ebarmn$. The corresponding graph from~\eqref{eq:graphs_image} is
$$
\begin{picture}(30,25)
\put(0,1){\line(2,1){30}}
\end{picture}.
$$

The degree of the bottom Pontyagin class of $\mathrm{Inj}(\R^m,\R^n)$ is $2n-2m-1$ or $2n-2m+1$ depending on the parity of $n-m$. Shifted by $m$ it appears in degree $\geq 2n-3m-1$ of $\Q\otimes\pi_*\Omega^m\mathrm{Inj}(\R^m,\R^n)$. But $2n-3m-1>2n-3m-4$, so neither of the other Pontryagin classes are excluded from ${\mathrm{Im}}\, D_*$ by Budney's results.

Notice that the above analysis shows that Theorem~\ref{t:im_D*} is equivalent to Theorem~\ref{t:connecting_hom}.

To finish the proof of Theorem~\ref{t:im_D*} one has to show that all the Pontryagin classes except the top one (when $n$ is odd) lie in ${\mathrm{Im}}\, D_*$. Notice that the degree of such class $\omega\in\Q\otimes\pi_*\Omega^m\mathrm{Inj}(\R^m,\R^n)$ is $4\ell-1-m$ for some $\ell>0$. If $\omega$ does not lie in $\mathrm{Im}\, D_*$, then  ${\partial_*}\omega\neq 0$ and the degree of ${\partial_*}\omega$ is $4\ell -m-2$. One has ${\partial_*}\omega$ can not lie in complexity~1, since complexity one is already taken by the image of the Euler classes. It can not lie in complexity~2, since the parity of its degree is $m$, and there are no classes other than \includegraphics[width=0.7cm]{complexity2_odd_2.eps}
of this parity in complexity~2, see Table~\ref{table:compl2}, which is also already taken by the image of the top Pontryagin class. Finally let us check that ${\partial_*}\omega$ can not lie in complexity~$\geq 3$.   Assuming that ${\partial_*}\omega$ corresponds to a cycle in graph-homology of $\calEp$ in complexity $t$ and Hodge degree $s$, let $I$ denote the number of internal vertices of one of the graphs in the linear combination representing ${\partial_*}\omega$, and $E$ be the number of edges in this graph. One has
$$
E\geq\frac{3I+s}2.
$$
One also has that the complexity $t$ is
$$
t=E-I.
$$
From the above
$$
E\leq 3t-s.
$$
The total degree is
$$
\deg\,{\partial_*}\omega=(n-1)E-n\cdot I-ms=nt-ms-E\geq (n-3)t-(m-1)s.
$$
Since for $t\geq 3$ one has $s\leq t$ (Proposition~\ref{p:trivial_in_trees}), the total degree can be estimated as
$$
\deg\,{\partial_*}\omega\geq (n-3)t -(m-1)s\geq (n-m-2)t\geq 3(n-m-2).
$$
On the other hand the degree of the highest Pontryagin class, which is  in the image of $(\Omega^mi)_*$, is $2n-5-m$ or $2n-7-m$ depending on the parity of $n$. One can easily check that $n>2m+1$ implies
$$
3(n-m-2)>2n-5-m,
$$
which finishes the proof of Theorem~\ref{t:im_D*}.
\end{proof}

Theorem~\ref{t:connecting_hom} is an equivalent reformulation of Theorem~\ref{t:im_D*}. Another equivalent reformulation is given below.

\begin{corollary}\label{c:ranks}
Assuming $n\geq 2m+2$, the ranks of the rational homotopy of $\Embmn$ are related to that of $\Ebarmn$ as follows:
\begin{itemize}
\item for $m$ odd, $n$ odd: %$\rank \Q\otimes\pi_*\Embmn=$
\begin{multline*}
\rank \Q\otimes\pi_*\Embmn=\\
\begin{cases}
\rank\Q\otimes\pi_*\Ebarmn+1,& *=2n-m-7-4k,\,\,\, 0\leq k\leq \frac{m-3}2;\\
\rank\Q\otimes\pi_*\Ebarmn-1,& *=2n-m-4,\,\,\, n-2m-1;\\
\rank\Q\otimes\pi_*\Ebarmn,& \text{otherwise}.
\end{cases}
\end{multline*}
%$$
%\begin{cases}
%\rank\Q\otimes\pi_*\Ebarmn+1,& *=2n-m-7-4k,\,\,\, 0\leq k\leq \frac{m-3}2;\\
%\rank\Q\otimes\pi_*\Ebarmn-1,& *=2n-m-4,\,\,\, n-2m-1;\\
%\rank\Q\otimes\pi_*\Ebarmn,& \text{otherwise}.
%\end{cases}
%$$

\item for $m$ odd, $n$ even:
\begin{multline*}
\rank \Q\otimes\pi_*\Embmn=\\
\begin{cases}
\rank\Q\otimes\pi_*\Ebarmn+1,& *=2n-m-5-4k,\,\,\, 0\leq k\leq \frac{m-3}2;\\
\rank\Q\otimes\pi_*\Ebarmn-1,& *=n-m-2;\\
\rank\Q\otimes\pi_*\Ebarmn,& \text{otherwise}.
\end{cases}
\end{multline*}

\item for $m$ even, $n$ odd:
\begin{multline*}
\rank \Q\otimes\pi_*\Embmn=\\
\begin{cases}
\rank\Q\otimes\pi_*\Ebarmn+1,& *=2n-m-7-4k,\,\,\, 0\leq k\leq \frac{m-4}2;\\
\rank\Q\otimes\pi_*\Ebarmn-1,& *=2n-m-4;\\
\rank\Q\otimes\pi_*\Ebarmn,& \text{otherwise}.
\end{cases}
\end{multline*}

\item for $m$ even, $n$ even:
\begin{multline*}
\rank \Q\otimes\pi_*\Embmn=\\
\begin{cases}
\rank\Q\otimes\pi_*\Ebarmn+1,& *=2n-m-5-4k,\,\,\, 0\leq k\leq \frac{m-2}2;\\
\rank\Q\otimes\pi_*\Ebarmn-1,& *=n-m-2,\,\,\, n-2m-1;\\
\rank\Q\otimes\pi_*\Ebarmn,& \text{otherwise}.
\end{cases}
\end{multline*}
\end{itemize}

\end{corollary}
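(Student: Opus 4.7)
The plan is to read off the corollary mechanically from the long exact sequence~\eqref{eq:long_ex_seq}, using Theorem~\ref{t:im_D*} to identify which classes in $\Q\otimes\pi_*\Omega^m\mathrm{Inj}(\R^m,\R^n)$ survive under $D_*$ and Theorem~\ref{t:hom_steif} to list those classes explicitly. Since this is a statement about ranks over $\Q$, taking dimensions in the exact sequence gives, for every $*$,
$$
\rank\pi_*\Embmn = \rank\pi_*\Ebarmn \;-\; N_{*+1} \;+\; M_{*},
$$
where $N_{*+1}$ is the number of independent generators of $\Q\otimes\pi_{*+1}\Omega^m\mathrm{Inj}(\R^m,\R^n)$ that do \emph{not} lie in $\mathrm{Im}\,D_*$ (these contribute to $\pi_*\Ebarmn$ via $\partial_*$ but not to $\pi_*\Embmn$) and $M_{*}$ is the number that \emph{do} lie in $\mathrm{Im}\,D_*$ (these contribute to $\pi_*\Embmn$). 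The corollary amounts to a bookkeeping of these two numbers in each of the four parity cases.

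First I would enumerate the generators of $\Q\otimes\pi_{*+m}\mathrm{Inj}(\R^m,\R^n)$ from Theorem~\ref{t:hom_steif}, shifted by $m$ to pass to $\Omega^m\mathrm{Inj}$, and then sort them into the two classes above using Theorem~\ref{t:im_D*}. A generator is not in $\mathrm{Im}\,D_*$ precisely when it either lies in degree $\leq 2n-3m-4$ or fails to come from $(\Omega^m i)_*$. From the fibered product decomposition $\mathrm{Inj}(\R^m,\R^n)=S^{n-1}\ltimes S^{n-2}\ltimes\cdots\ltimes S^{n-m}$ and the description of $i$ as the inclusion that drops the first sphere, the only Stiefel generators missing from $\mathrm{Im}\,i_*$ are the Euler class of degree $n-1$ (when $n$ is even) and the top Pontryagin class of degree $2n-3$ (when $n$ is odd). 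These contribute the $-1$ terms at $*=n-m-2$ and $*=2n-m-4$ respectively. Separately, the Euler class of degree $n-m$ (when $n-m$ is even) is in $\mathrm{Im}\,i_*$ but lies in degree $n-2m \leq 2n-3m-4$ for $n\geq 2m+2$, so it is excluded by Budney's connectivity bound and produces the $-1$ term at $*=n-2m-1$.

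For the $+1$ contributions one checks that every remaining Pontryagin class, of degree $2n-3-4k-m$ (resp.\ $2n-5-4k-m$) in $\Omega^m\mathrm{Inj}$, lies in $\mathrm{Im}\,i_*$ and has degree strictly greater than $2n-3m-4$ within the given range of $k$. An elementary inequality shows this always holds for $0\leq k\leq \lfloor(m-1)/2\rfloor$ or $\lfloor(m-2)/2\rfloor$ depending on parity, so by Theorem~\ref{t:im_D*} every such class does lie in $\mathrm{Im}\,D_*$. These produce the $+1$ contributions in the four cases, after the appropriate reindexing $k\mapsto k-1$ in the cases where the top Pontryagin class has been removed.

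The main bookkeeping obstacle is tracking the parities carefully to confirm that no generator of $\pi_*\mathrm{Inj}(\R^m,\R^n)$ has been double-counted or omitted -- in particular the subtle interaction between the ``Euler class of degree $n-1$'' and the top Pontryagin class in the $m$ even, $n$ odd case, where Theorem~\ref{t:hom_steif} shows the former is absorbed. Once this case analysis is carried out in each of the four parity combinations, the four displayed formulas in the statement follow directly from the rank equation above. No further input is needed beyond Theorems~\ref{t:im_D*} and~\ref{t:hom_steif}.
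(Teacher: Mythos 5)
Your proposal is correct and is exactly the bookkeeping the paper intends: the paper states Corollary~\ref{c:ranks} without an explicit proof, as an equivalent reformulation of Theorem~\ref{t:im_D*}, and your derivation from the rational long exact sequence~\eqref{eq:long_ex_seq} using Theorems~\ref{t:im_D*} and~\ref{t:hom_steif} is the intended argument. (A very minor inaccuracy: the Euler class of degree $n-m$ is not literally in $\mathrm{Im}\,i_*$ when $m=1$, since $\mathrm{Inj}(\R^0,\R^{n-1})$ is a point and $i_*$ is zero; but that class is excluded from $\mathrm{Im}\,D_*$ regardless, so the rank computation is unaffected.)
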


\section{Koszul complexes}\label{s:koszul}
\subsection{Cofibrant model for $\widetilde{\HH}_*(S^{m\bullet},\Q)$}\label{ss:proj_cofibr_model}
In this section we introduce an explicit cofibrant model (in the projective model structure) of the right $\Omega$-module $\widetilde{\HH}_*(S^{m\bullet},\Q)$. In Subsection~\ref{ss:compl_der_maps} using this cofibrant replacement we produce another model for the complex of derived maps $\underset{\Omega}{\hRmod}(\widetilde{\HH}_*(S^{m\bullet},\Q),N)$, where $N$ is any right $\Omega$-module. Applying this to the case $N=\hatH$ or $N=\hatP$ one obtains complexes $\calK_\HH^{m,n}$ and $\calK_\pi^{m,n}$ computing respectively $\HH_*(\Ebarmn,\Q)$ and $\Q\otimes\pi_*(\Ebarmn)$.

One has $\widetilde{\HH}_*(S^{m\bullet},\Q)=\bigoplus_{s\geq 0}\widetilde{\HH}_{ms}(S^{m\bullet},\Q)$. Denote by $Q_s^m$ the $\Omega$-module $\widetilde{\HH}_{ms}(S^{m\bullet},\Q)$. It is concentrated in a single component
$$
Q_s^m(k)=
\begin{cases}
\Sigma^{ms}\Q,& k=s;\\
0,& k\neq s.
\end{cases}
$$
In the above expression $\Sigma^{ms}$ means that the corresponding 1-dimensional vector space lies in the grading $ms$. The symmetric group action on this component coincides with the sign representation in case $m$ is odd and is trivial in case $m$ is even.

Below we describe explicitly a cofibrant replacement $CQ_s^m$ of $Q_s^m$. This construction comes from the theory of operads. As we explained in Subsection~\ref{ss:operads} the structure of a right $\Omega$-module is the same thing as the structure of a right module over the operad $\Comm_+$ of commutative non-unital algebras. In~\cite[Section~5.3]{Fresse} Fresse defines a cofibrant replacement of a right module $M$ over any Koszul operad $\calP$. He denotes such replacement $CM$ by $K(M,P,P)$. As a symmetric sequence it is $CM=M\circ\calP^\text{!`}[1]\circ\calP$, where $\calP^\text{!`}$ is the cooperad Koszul dual to $\calP$, and \lq\lq $[1]$" denotes the operadic suspension, see Subsection~\ref{ss:operads}. The differential in $M\circ\calP^\text{!`}[1]\circ\calP$ comes from two ingredients: when a cooperation from $\calP^\text{!`}[1]$ acts from the right on $M$ or when it acts from the left on $\calP$. For the precise definition and construction, see~\cite{Fresse}.  Below we explain this construction  in the case of $CQ_s^m=K(Q_s^m,\Comm_+,\Comm_+)$. Notice that the action of $\Comm_+$ on $Q_s^m$ is trivial. Because of that only the second part of the differential is non-trivial for $CQ_s^m$. The operad $\Comm_+$ is well known to be Koszul whose Koszul dual is the operad $\Lie$ of Lie algebras. We will use the description of the cooperad $\coLie=\Comm_+{}^\text{!`}$ of Lie algebras that interprets  the components of the cooperad as  spaces of trees modulo Arnol'ld relations. This description arises from the duality between the homology and cohomology of configuration spaces, see for example~\cite{Sinha:GD} or~\cite[Section~5]{T-OS}, see also Subsection~\ref{sss:operads_examples}.

For a finite set $K$ with $k$ elements, the component $CQ_s^m(K)$ of $CQ_s^m=Q_s^m\circ\coLie[1]\circ\Comm_+$ is a chain complex spanned by {\it oriented forests} with $s$ connected components. The vertices of the forests are disjoint subsets of $K$ so that all the vertices of any such forest define a partition of $K$. An {\it orientation set} of a forest is the union of the set of its connected components (considered as elements of degree $m$) and the set of edges (considered as elements of degree~1). An {\it orientation} of a forest is an ordering of its orientation set. The relations in $CQ_s^m$ are the orientation relations and the Arnol'd relations. More precisely one has $T_1=\pm T_2$ if the forest $T_1$ differs from $T_2$ only in reordering of the orientation set. The sign is the corresponding Koszul sign of permutation. The Arnol'd relations have the form:

\begin{center}
\psfrag{S1}[0][0][1][0]{$S_1$}
\psfrag{S2}[0][0][1][0]{$S_2$}
\psfrag{S3}[0][0][1][0]{$S_3$}
\includegraphics[width=10cm]{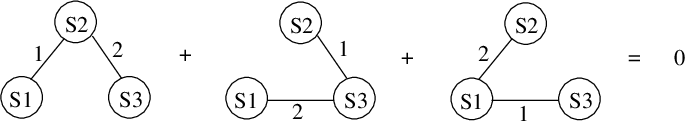}
\end{center}
where $S_i$, $i=1$, $2$, $3$, are disjoint subsets of $K$. All the other edges (and vertices) in each of the three forests are the same. The differential in $CQ_s^m$ is the sum (with appropriate signs) of contractions of edges. The new vertex is labeled by the union of two sets corresponding to the vertices that bounded the collapsed edge. The sign is obtained by pulling the edge that is contracted to the first place in the orientation set and then forgetting it since it is no more present in the orientation set of the new forest:

\begin{center}
\psfrag{d}[0][0][1][0]{$\partial$}
\includegraphics[width=14cm]{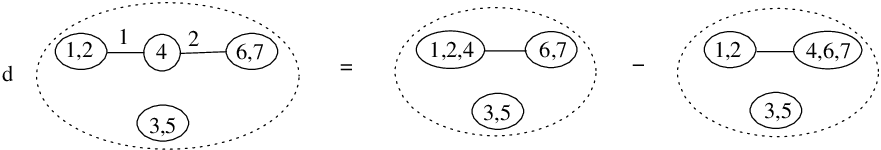}
\end{center}

For a surjective map $g\colon K_1\to K_2$ between two finite sets, one has the induced map
$$
g^*\colon CQ_s^m(K_2)\to CQ_s^m(K_1),
$$
that sends any forest $T\in CQ_s^m(K_2)$ to the forest $g^*(T)$ obtained from $T$ by replacing each vertex $S_i\subset K_2$ with $g^{-1}(S_i)$.

The map $CQ_s^m\to Q_s^m$ sends everything to zero except the forest without edges and whose all vertices are singletons. The latter forest gets mapped to a generator of $Q_s^m(s)$.

From the definition it is clear that $CQ_s^m$ as a right $\Comm_+$ module is freely generated by a certain symmetric sequence $KQ_s^m=Q_s^m\circ\coLie[1]$ that we call the {\it Koszul dual} of $Q_s^m$. Notice that $KQ_s^m$ is naturally a (cofree) right comodule over the cooperad $\coLie[1]$. Explicitly $KQ_s^m(K)$ is a subspace of $CQ_s^m(K)$ spanned by forests whose all vertices are singletons. Notice also that $KQ_s^m(K)$ is concentrated in the single homological degree $ms+(k-s)=(m-1)s+k$ (such forests have $s$ connected components and $k-s$ edges).

It is easy to show that for $m\geq 2$ one has a natural isomorphism of symmetric sequences:
$$
KQ_s^m(\bullet)\simeq \widetilde{\HH}_{(m-1)s+\bullet}(S^{m\bullet}/\Delta^\bullet S^m,\Q),
\eqno(\numb)\label{eq:koszul_Q_config_homology}
$$
where $\Delta^kS^m$ denotes the \lq\lq fat diagonal" in $S^{mk}$ ---  the union of subspaces $x_i=x_j$, $1\leq i\neq j\leq k$, in the smash product $S^m\wedge\ldots \wedge S^m=S^{mk}$.

To see this isomorphism one can notice first that
$$
\widetilde{\HH}_*(S^{mk}/\Delta^kS^m,\Q)=\overline{\HH}_*(\mathrm{C}(k,\R^m),\Q),
\eqno(\numb)\label{eq:one_pt_comp_isom_conf_sp}
$$
where $\overline{\HH}_*(-)$ denotes the locally compact singular homology. Then one should use the Poincar\'e duality together with the description of the cohomology groups of $\mathrm{C}(k,\R^m)$ in terms of spaces of trees modulo Arnol'd relations~\cite{Arn}. We leave it as an exercise to the reader. In~\cite{AT} we gave a more geometric explanation for the isomorphism~\eqref{eq:koszul_Q_config_homology}.

To finish this section we mention that the Koszul dual of $\widetilde{\HH}_*(S^{m\bullet},\Q)$ is the sum of Koszul duals to $Q_s^m$, $s\geq 0$, and is exactly the symmetric sequence $\overline{\HH}_*(\mathrm{C}(\bullet,\R^m),\Q)$.

\subsection{Koszul complex of derived maps}\label{ss:compl_der_maps}
Let $N$ be a right $\Omega$-module. For simplicity we will be assuming that $N$ has a trivial differential, as is the case with $\hatH$ and $\hatP$. We will now desrcibe explicitly the complex of derived maps
$$
\underset{\Omega}{\hRmod}(\widetilde{\HH}_*(S^{m\bullet},\Q),N)=\prod_{s\geq 0}\underset{\Omega}{\Rmod}(CQ_s^m,N).
\eqno(\numb)\label{eq:rmod_maps}
$$
Since $CQ_s^m$ is freely generated by the symmetric sequence $KQ_s^m$, one has that the space of the above complex is the product
$$
\prod_{s\geq 0}\prod_{k\geq s} \mathrm{hom}_{\Sigma_k}(KQ_s^m(k),N(k)).
\eqno(\numb)\label{eq:prod_rmod_maps}
$$
In the case $N=\hatH$ or $N=\hatP$, $n\geq 2m+2$, the product above can be replaced by a direct sum. Indeed, the homological degree of $KQ_s^m(k)$ is $(m-1)s+k$. The spaces $\hat\HH_*(B_n(k),\Q)$ and $\Q\otimes\hat\pi_*(B_n(k))$ are concentrated in the degrees $\geq\frac k2(n-1)$ and $\geq (k-1)(n-2)+1$ respectively. Therefore
$\mathrm{hom}_{\Sigma_k}(KQ_s^m(k),\hat\HH_*(B_n(k),\Q))$ and $\mathrm{hom}_{\Sigma_k}(KQ_s^m(k),\Q\otimes\hat\pi_*(B_n(k)))$ are concentrated in the degrees $\geq\frac k2(n-3)-(m-1)s$ and $\geq k(n-3)-(m-1)s-n+3$ respectively. Notice that when $s$ is fixed this gradings go to infinity with $k$. As a consequence the second product in~\eqref{eq:prod_rmod_maps} can be replaced by a direct sum in these two cases. On the other hand since $k\geq s$ , we obtain
$$
\frac k2(n-3)-(m-1)s\geq \frac{n-2m-1}2s,
$$
$$
k(n-3)-(m-1)s-n+3\geq (n-m-2)s-n+3.
$$
Since $n\geq 2m+2$ we obtain that this minimal grading goes to infinity with $s$, and therefore the first product in~\eqref{eq:prod_rmod_maps} can also be replaced by a direct sum. A similar argument shows that $\mathrm{hom}_{\Sigma_k}(KQ_s^m(k),N(k))$ is also a direct sum of spaces taken by the homological degree of $N(k)$. Thus the complexes of derived maps we are interested in can be written as direct sums of complexes
\begin{multline}
\calK_\HH^{m,n}=
\underset{\Omega}{\hRmod}\left(\widetilde{\HH}_*(S^{m\bullet},\Q),\hatH\right)=\\
\bigoplus_{s,t}\left(\bigoplus_{k\geq s}\mathrm{hom}_{\Sigma_k}\left(KQ_s^m(k),\hat\HH_{t(n-1)}(B_n(k),\Q)\right),\partial\right);
\label{eq:compl_der_maps_H}
\end{multline}

\begin{multline}
\calK_\pi^{m,n}=
\underset{\Omega}{\hRmod}\left(\widetilde{\HH}_*(S^{m\bullet},\Q),\hatP\right)=\\
\bigoplus_{s,t}\left(\bigoplus_{k\geq s}\mathrm{hom}_{\Sigma_k}\left(KQ_s^m(k),\Q\otimes\hat\pi_{1+t(n-2)}(B_n(k))\right),\partial\right).
\label{eq:compl_der_maps_pi}
\end{multline}

In the first case one  has a restriction $k\leq 2t$. In the second case one has restriction $k\leq t+1$. %We call the additional grading $s$ {\it Hodge degree} and we call the grading $t$ {\it complexity}.

Now let us describe the differential in the complex $\underset{\Omega}{\Rmod}(CQ_s^m,N)$. Let $f$ be a pure element in this complex lying in $\mathrm{hom}_{\Sigma_k}(KQ_s^m(k),N(k))$.
The element $\partial f$ is also pure and lies in $\mathrm{hom}_{\Sigma_{k+1}}(KQ_s^m(k+1),N(k+1))$ (here we are using the fact that $N$ has trivial differential).  Let $T$ be a forest in $KQ_s^m(k+1)$. One has
$$
(\partial f)(T)=\partial(f(T))-(-1)^{|f|}f(\partial T).
$$
Since we assume that $N$ has trivial differential the first summand can be ignored, and one has
$$
(\partial f)(T)=(-1)^{|f|-1}f(\partial T).
$$
On the other hand,
$$
\partial T=\sum_{e\in E(T)}\pm\gamma_e^*(T/e),
$$
where $E(T)$ is the set of edges of $T$; $T/e$ is the forest obtained from $T$ by contracting edge $e$. We view $T/e$ as an element of $KQ_s^m((k+1)/e)$ with $(k+1)/e$ being the set obtained from $\{1,2,\ldots,k+1\}$ by identifying the endpoints of $e$. The map
$$
\gamma_e^*\colon CQ_s^m((k+1)/e)\to CQ_s^m(k+1)
$$
above is induced by the surjective map
$$
\gamma_e\colon k+1\to (k+1)/e.
$$
(Abusing notation $\gamma_e^*$ also  denotes below the induced map $\gamma_e^*\colon N((k+1)/e)\to N(k+1)$.) Since $f$ is a morphism of $\Omega$-modules, we finally get
$$
(\partial f)(T)=\sum_{e\in E(T)}\pm\gamma_e^*(f(T/e)).
\eqno(\numb)\label{eq:differential_koszul}
$$
The sign is $(-1)^{|f|-1}$ times the sign obtained by pulling the edge $e$ on the first place of the orientation set of $T$.

\subsection{Complex of bicolored  graphs}\label{ss:bicol_graphs}
In this subsection we will describe a complex $\HHHH^{m,n}$  dual to $\calK_\HH^{m,n}$. The construction is a straightforward application of the duality between the homology and cohomology of configuration spaces~\cite{Sinha:GD} to the description of $\calK_\HH^{m,n}$ that we gave in the previous subsection. This dual complex  $\HHHH^{m,n}$ was already described in our previous paper~\cite[Section~11]{AT}, but it was constructed using a slightly different  approach.
 The complex $\HHHH^{m,n}$ computes the rational cohomology $\HH^*(\Ebarmn,\Q)$. Applying~\eqref{eq:koszul_Q_config_homology}, \eqref{eq:one_pt_comp_isom_conf_sp}, \eqref{eq:compl_der_maps_H}, \eqref{eq:compl_der_maps_pi} one gets that this dual complex has the form
$$
\HHHH^{m,n}=\left(\bigoplus_k\overline{\HH}_{-*}(\mathrm{C}(k,\R^m),\Q)\otimes_{\Sigma_k}\hat\HH{}^*(\mathrm{C}(k,\R^n),\Q),d\right).
\eqno(\numb)\label{eq:bic_gr}
$$
To recall Subsections~\ref{sss:operads_examples}-\ref{sss:inf_bimod}, $\hat\HH{}^*(\mathrm{C}(k,\R^n),\Q)$ and $\HH{}^*(\mathrm{C}(k,\R^m),\Q)$ are described as certain spaces of forests modulo Arnol'd relations. Finally applying Poincar\'e duality
$$
\overline{\HH}_{*}(\mathrm{C}(k,\R^m),\Q)\simeq \HH^{mn-*}(\mathrm{C}(k,\R^m),\Q)\otimes(\sign_k)^{\otimes m}
\eqno(\numb)\label{eq:poincare2}
$$
we can describe $\HHHH^{m,n}$ as a complex of graphs that have two types of edges: dotted and full. The dotted edges correspond to the generators of $\HH^*(\mathrm{C}(\bullet,\R^m),\Q)$ and the full edges correspond to the generators of $\HH^*(\mathrm{C}(\bullet,\R^n),\Q)$. The vertices of the graphs are non-labeled (this corresponds to the fact that in~\eqref{eq:bic_gr} the tensor product is taken over $\Sigma_k$). There are two restrictions on the graphs. If we remove the dotted edges the resulting graph is a forest whose all connected components have at least two vertices. If we remove the full edges the resulting graph is a forest with any type of connected components. The number of full edges in a graph is its {\it complexity}, the number of connected components obtained by removing full edges is its {\it Hodge degree}. The space of graphs is taken modulo Arnol'd relations with respect to both types of edges. The differential is the sum of contractions of dotted edges. There are two equivalent ways two define the {\it orientation set} of a graph. In the first way, see~\eqref{eq:bic_gr},~\eqref{eq:poincare2}, it consists of
\begin{itemize}
\item full edges (of degree $(n-1)$)
\item dotted edges (of degree $(m-1)$)
\item vertices (of degree $-m$)
\end{itemize}
It is easy to see that equivalently we can define an orientation set as a union of
\begin{itemize}
\item full edges (of degree $(n-1)$)
\item dotted edges (of degree $-1$)
\item connected components with respect to dotted edges (of degree $-m$)
\end{itemize}
The edges are oriented. Changing orientation of an edge produces the sign $(-1)^n$ in the case of a full edge and $(-1)^m$ in the case of a dotted edge (assuming we choose the first way to define an orientation set, otherwise there is no sign). The latter way is more natural to the operadic approach, see Subsections~\ref{ss:proj_cofibr_model},~\ref{ss:compl_der_maps} in which we describe $KQ^m_s=Q^m_s\circ \coLie[1]$.

The complex $\HHHH^{m,n}$ naturally carries a structure of a polynomial bialgebra (with respect to the operation of connected sum). The space of generators is given by its subcomplex $\HHHH^{m,n}_\pi$ of connected graphs. Obviously $\HHHH^{m,n}_\pi$ computes the dual of the rational homotopy  of $\Ebarmn$. The table below describes the homology generators of $\HHHH^{m,n}_\pi$ in complexities  $\leq 2$ and the corresponding to them generators of $\calE^{m,n}_\pi$.

\begin{center}
\begin{tabular}{|c|c|c|c|c|c|}
\hline
\raisebox{2.5mm}{$\HHHH^{m,n}_\pi$}&
\begin{picture}(30,25)
\put(0,2){\line(2,1){30}}
\end{picture}&
\includegraphics[width=1cm]{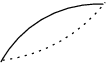}&
\includegraphics[width=1.2cm]{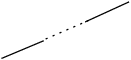}&
\includegraphics[width=2cm]{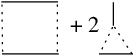}&
\includegraphics[width=1.2cm]{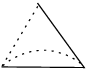}\\
\hline
\raisebox{2.5mm}{$\calE^{m,n}_\pi$}&
\begin{picture}(30,25)
\put(0,2){\line(2,1){30}}
\end{picture}&
\begin{picture}(30,25)
\put(0,0){\line(2,1){15}}
\put(21,11.45){\circle{15}}
\end{picture}&
\includegraphics[width=1.2cm]{complexity2_even}&
\includegraphics[width=1.8cm]{complexity2_odd_1}&
\includegraphics[width=1.2cm]{complexity2_odd_2}\\
\hline
\end{tabular}

\vspace{.2cm}

\end{center}

As we mentioned earlier all our graph-complexes look very similar to the graph-complexes that appear in the Bott-Taubes type integration construction for spaces of long embeddings~\cite{CattRossi,Sakai,SakW,Wat}. The latter construction produces a map from a certain graph-complex to the de Rham complex of differential forms on $\Embmn$. Unfortunately for $m\geq 2$ this construction works nicely  only on the level of graphs with no more than  one cycle. As example if we look at the classes from the previous table, only the third and the forth ones are proper to $\Embmn$, see Theorem~\ref{t:connecting_hom}. Both classes are recovered by Sakai in his construction~\cite{Sakai}. The third class from our table is the Haefliger class. It is the only class proper to $\Embmn$ and appearing on the level of graphs without loops. The corresponding cycle in Sakai's graph-complex is

\begin{center}
\psfrag{13}[0][0][1][0]{$\frac 13$}
\includegraphics[width=5cm]{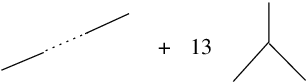}.
\end{center}
It is quite interesting that both graphs appear in our computations but in two different complexes. As another example, most of  the wheel-type cocycles considered in Proposition~\ref{p:wheels} are constructed in~\cite{SakW}.

\subsection{Complex of deformations}\label{ss:operadic_interp}
It turns out that the complex $\calK^{m,n}_\HH$ has a natural interpretation from the point of view of the deformation theory of operads. This theory was initiated by Kontsevich and Soibelman~\cite{Kontsevich} and was further developed by Merkulov and Vallette~\cite{MerkVal}. Given $m<n$, one has inclusion of the operads of little discs:
$$
\balls_m\stackrel{i}{\hookrightarrow}\balls_n.
$$
inducing morphism in the homology
$$
\HH_*(\balls_m,\Q)\stackrel{i_*}{\longrightarrow}\HH_*(\balls_n,\Q).
\eqno(\numb)\label{eq:mor_hom_balls}
$$
The operad $\HH_*(\balls_m,\Q)$ is the associative operad $\Assoc$ if $m=1$ and is the operad $\Poiss_{m-1}$ of graded Poisson algebras with bracket $[x_1,x_2]$ of degree $(m-1)$ and commutative product $x_1x_2$ of degree~0. The map~\eqref{eq:mor_hom_balls} sends the product to the product and the bracket to zero. We notice that $i_*$ factors through the commutative operad.

\begin{theorem}\label{th:def_operads}
For $n\geq 2m+2$, one has
$$
\chains^\Q\Ebarmn\simeq \Sigma^{m+1}\mathrm{Def}\left(\HH_*(\balls_m,\Q)\stackrel{i_*}{\rightarrow}\HH_*(\balls_n,\Q))\right).
$$
\end{theorem}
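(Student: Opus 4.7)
The plan is to identify $\calK_\HH^{m,n}$ with the deformation complex $\mathrm{Der}(i_*)$, then transport the quasi-isomorphism $\chains^\Q\Ebarmn\simeq\calK_\HH^{m,n}$ provided by \eqref{eq:compl_der_maps_H} and Theorem~\ref{t:hom_as_rmod}(i). The crucial observation is that the cofibrant replacement $CQ_s^m=Q_s^m\circ\coLie[1]\circ\Comm_+$ from Subsection~\ref{ss:proj_cofibr_model} is the Koszul-theoretic cofibrant resolution of the generator $Q_s^m$, and that summing over $s$ the symmetric sequence $KQ^m:=\bigoplus_s Q_s^m\circ\coLie[1]$ coincides (after applying Poincar\'e duality \eqref{eq:poincare2} and the isomorphisms \eqref{eq:koszul_Q_config_homology}--\eqref{eq:one_pt_comp_isom_conf_sp}) with the Koszul dual cooperad of $\HH_*(\balls_m,\Q)$. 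Indeed, for $m\ge 2$ the operad $\HH_*(\balls_m,\Q)=\Poiss_{m-1}$ is Koszul and its Koszul dual is, up to operadic suspension and sign, $\HH^*(\balls_m(\bullet),\Q)$, which by Poincar\'e duality matches $\overline{\HH}_*(\mathrm{C}(\bullet,\R^m),\Q)$; the case $m=1$ corresponds to the standard self-duality of $\Assoc$.

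First, I would rewrite
$$
\calK_\HH^{m,n}=\bigoplus_{s,t}\bigoplus_{k\ge s}\hom_{\Sigma_k}\!\left(KQ_s^m(k),\hat\HH_{t(n-1)}(\balls_n(k),\Q)\right)
$$
in the form $\prod_k\hom_{\Sigma_k}\!\left(\HH_*(\balls_m,\Q)^{\text{!`}}(k),\,\HH_*(\balls_n,\Q)(k)\right)$ up to a uniform suspension. The role of the hat is to restrict to the reduced part of $\HH_*(\balls_n(\bullet),\Q)$, which is exactly the convention that turns a convolution algebra into a (reduced) deformation complex of a morphism of (augmented) operads.

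Second, I would match the differentials. By \eqref{eq:differential_koszul}, the differential on $\calK_\HH^{m,n}$ sends $f$ to the sum over edges $e$ of a tree $T$ of $\gamma_e^{*}f(T/e)$, where $\gamma_e$ is the surjection collapsing $e$. Under the identification of $KQ^m$ with $\HH_*(\balls_m,\Q)^{\text{!`}}$, contraction of an edge is precisely the cooperadic structure, and the $\Omega$-action $\gamma_e^*$ on $\hat\HH_*(\balls_n(\bullet),\Q)$ is precisely the action of the commutative product in $\HH_*(\balls_n,\Q)$. Since $i_*$ sends the commutative product to the commutative product and kills the Lie bracket, the twist by $i_*$ in the Merkulov--Vallette deformation differential reduces to exactly this $\Omega$-action. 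Hence the differentials agree.

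Third, the overall shift $\Sigma^{m+1}$ comes from combining: the operadic desuspension $[1]$ appearing in $\coLie[1]$, the Koszul-dual shift for $\Poiss_{m-1}$ (which contributes $m-1$ in arity-dependent form), the arity-dependent shifts from Poincar\'e duality, and the conventional shift by $1$ separating a convolution algebra from the deformation complex of a morphism; these absorb arity-dependently into the identification $KQ^m\simeq\HH_*(\balls_m,\Q)^{\text{!`}}$, leaving the constant $m+1$ global suspension. The main obstacle will be bookkeeping: carefully tracing all the suspensions, sign representations, and Koszul sign conventions so that the shift is precisely $m+1$ (and not an off-by-one), and verifying the identification separately in the anomalous case $m=1$ where $\HH_*(\balls_m,\Q)=\Assoc$ rather than $\Poiss_0$. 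Once the shifts are pinned down, the equivalence of chain complexes is a formal consequence of Koszul duality together with the fact, already used throughout Section~\ref{s:koszul}, that $\hat\HH_*(\balls_n(\bullet),\Q)$ is a (reduced) $\HH_*(\balls_m,\Q)$-module via $i_*$ factoring through $\Comm_+$.
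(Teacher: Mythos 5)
Your overall plan (compare $\calK_\HH^{m,n}$ with the deformation complex via the Koszul-duality identification $KQ^m\simeq\Poiss_{m-1}^{\text{!`}}[1]$ / $\overline{\HH}_*(\mathrm{C}(\bullet,\R^m),\Q)$) matches the paper's, but there is a genuine gap in how you relate the two complexes and, correlated with it, in your treatment of the differential.

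\textbf{Gap: $\Sigma^{m+1}\calK_\HH^{m,n}$ is not equal to $\mathrm{Der}(i_*)$.} You assert that after the Koszul-duality identification, $\calK_\HH^{m,n}$ takes the form $\bigoplus_k\hom_{\Sigma_k}(\HH_*(\balls_m,\Q)^{\text{!`}}(k),\HH_*(\balls_n,\Q)(k))$ (no hat on the target), with the remark that the hat ``is exactly the convention that turns a convolution algebra into a (reduced) deformation complex of a morphism of (augmented) operads.'' That is not correct. The deformation complex of Merkulov--Vallette uses the full operad $\HH_*(\balls_n,\Q)$ in all arities; it reduces neither to the augmentation ideal nor to the cross-effect. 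The hat $\hat\HH_*(\balls_n(\bullet),\Q)$ is Pirashvili's cross-effect, i.e.\ the quotient of $\HH_*(\balls_n(k),\Q)$ by the images of the degeneracy maps $\alpha_i^*\colon\HH_*(\balls_n(k-1),\Q)\to\HH_*(\balls_n(k),\Q)$ -- a genuinely different reduction. What is true (and what the paper actually proves in Subsection~\ref{ss:operadic_interp}) is that $\mathrm{Der}(i_*)$, rewritten as $\bigoplus_{k\geq 2}\Sigma^{m+1}\hom_{\Sigma_k}\bigl(\overline{\HH}_*(\mathrm{C}(k,\R^m),\Q),\HH_*(\mathrm{C}(k,\R^n),\Q)\bigr)$, contains an acyclic subcomplex spanned by degeneracies, and the quotient by this subcomplex is $\Sigma^{m+1}\calK_\HH^{m,n}$ (with hat). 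So the two are only quasi-isomorphic, via a nontrivial quotient, and one must supply the acyclicity of the kernel; you supply no argument for this.

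\textbf{Consequence for the differential.} The Merkulov--Vallette deformation differential is the pre-Lie commutator $df=\mu\circ f-(-1)^{|f|-1}f\circ\mu$. Your matching accounts only for the term $f\circ\mu$, correctly identifying it with the edge-contraction formula~\eqref{eq:differential_koszul} after the observation that $i_*$ factors through $\Comm_+$. But the term $\mu\circ f$, given by formula~\eqref{eq:m_circ_f} (a sum over univalent vertices of the forest, inserting the target element into the commutative product), is \emph{not} zero on $\mathrm{Der}(i_*)$: it produces elements in the image of degeneracies in $\HH_*(\balls_n(\bullet),\Q)$. It dies only after passing to the cross-effect quotient, i.e.\ after you perform the reduction you swept under the rug. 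Thus ``hence the differentials agree'' does not follow from what you wrote; the term $\mu\circ f$ must be explicitly shown to pass to zero, and the complex spanned by its images must be shown acyclic. Your argument, as stated, would also leave a degree discrepancy in low arities ($k=0,1$), which the paper handles by noting \eqref{eq:unital_def} computes reduced homology.
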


In the above $\mathrm{Def}(\bullet\to\bullet)$ states for a deformation complex of a morphism of operads as defined in~\cite{Kontsevich,MerkVal}. For simplicity the operads $\balls_m$, $\balls_n$ in the above theorem are taken without the degree zero component.\footnote{We believe that the result is still true even if one includes the degree zero components, but technically it is more complicated, since one has to find a cofibrant model of the unital Poisson and associative operads.}  The case $m=1$ of this theorem is well known. Indeed, the complex $\mathrm{Def}\left(\Assoc\stackrel{i_*}{\longrightarrow}\HH_*(\balls_m,\Q)\right)$ is the usual Hochschild complex of the operad $\HH_*(\balls_n,\Q)=\Poiss_{n-1}$, see~\cite{Kontsevich,Tur-HSLK,LTV}. For $m\geq 2$ this relation between the homology of higher dimensional long knots and the deformation homology of the morphism~\eqref{eq:mor_hom_balls} was conjectured earlier by Kontsevich.

We should also mention that the deformation complex of the map~\eqref{eq:mor_hom_balls} in case $m=n$, i.e. when $i_*$ is the identity map, was studied by Willwacher in~\cite{Willwacher}. In the latter work  several graph-complexes similar to those appearing in our paper are produced. Moreover in~\cite[Section~4]{Willwacher}, the author considers a filtration in one of his graph-complexes whose associated graded quotient up to a regrading is quasi-isomorphic to our deformation complex (assuming $m$ and $n$ are of the same parity), see \cite[Remark~4.7]{Willwacher}. One of the interesting consequences of Willwacher's work for us is that in case $n$ is even, the Hodge degree one part of the rational homotopy of $\Ebarmn$ always contains the non-completed  Grothendieck-Teichm\"uller Lie algebra, see \cite[Proposition~7.2]{Willwacher}. Since the Hodge degree one part of the complex $\calEp$ depends only on the parity of $n$ the latter result holds for either parity of $m$, but of course always assuming the stable range $n\geq 2m+2$. Besides that a careful reader might also find several overlaps between our paper and the one of Willwacher~\cite{Willwacher} in the way of working with graph-complexes.

Let $\Poiss_{m-1}^\infty$ be a cofibrant model of $\Poiss_{m-1}$, that is a quasi-free operad quasi-isomorphic to $\Poiss_{m-1}$ via a projection
$$
\Poiss_{m-1}^\infty\stackrel{p}{\longrightarrow}\Poiss_{m-1}.
$$
The operad $\Poiss_{m-1}$ is known to be Koszul~\cite{GetzJon}. Its Koszul dual $\Poiss_{m-1}^{!}$ is the operad of graded Poisson algebras with bracket of degree zero and commutative product of degree $(m-1)$. The cofibrant model $\Poiss_{m-1}^\infty$ can be chosen as the quasi-free operad ${\mathcal F}(\Sigma^{-1}\Poiss_{m-1}^{\text{!`}}[1])$ generated by
the symmetric sequence $\Sigma^{-1}\Poiss_{m-1}^\text{!`}[1]$, where $\Poiss_{m-1}^\text{!`}[1]$ is the cooperad whose dual is the operad $\Poiss_{m-1}^![1]$ of graded Poisson algebras with bracket of degree~1 and commutative product of degree $m$,\footnote{The operad $\Poiss_{m-1}^![1]$ is obtained from $\Poiss_{m-1}^!$ by an operadic suspension, see Subsection~\ref{sss:operads_monoids}.} and $\Sigma^{-1}$ is the desuspension of each component in the symmetric sequence. The differential in ${\mathcal F}(\Sigma^{-1}\Poiss_{m-1}^\text{!`}[1])$ comes from the cooperadic structure of $\Poiss_{m-1}^\text{!`}[1]$. We refer to~\cite{MerkVal} for a better account on the deformation theory of Koszul objects. The space of the complex $\mathrm{Def}\left(\Poiss_{m-1}\stackrel{i_*}{\rightarrow}\Poiss_{n-1}\right)$ is the space of infinitesimal deformations (derivations) of the morphism of operads:
$$
{\mathcal F}(\Sigma^{-1}\Poiss_{m-1}^\text{!`}[1])\stackrel{p\circ i_*}{\longrightarrow}\Poiss_{n-1},
$$
that is
$$
\mathrm{Def}\left(\Poiss_{m-1}\stackrel{i_*}{\rightarrow}\Poiss_{n-1}\right)=\bigoplus_{k\geq 2}\mathrm{hom}_{\Sigma_k}\left(\Sigma^{-1}\Poiss_{m-1}^\text{!`}[1](k),\Poiss_{n-1}(k)\right).
\eqno(\numb)\label{eq:deform_expl}
$$
One has a natural isomorphism of graded $\Sigma_k$-modules
$$
\widetilde{\HH}_*(S^{mk}/\Delta^kS^m,\Q)\simeq \Sigma^m\Poiss_{m-1}^\text{!`}[1](k),
$$
 that one can see for example from the forest description of $\widetilde{\HH}_*(S^{mk}/\Delta^kS^m,\Q)=\overline{\HH}_*(\mathrm{C}(k,\R^m),\Q)$ given above and a similar forest description of the cooperad of Poisson algebras given in~\cite{Sinha:GD} (here $\overline\HH$ denotes singular locally finite homology). The differential in \eqref{eq:deform_expl} is the pre-Lie commutator with $\mu=x_1x_2$, see~\cite{MerkVal}:\footnote{Here we use the fact that $i_*$ sends $[x_1,x_2]$ to zero.}
$$
df=\mu\circ f-(-1)^{|f|-1}f\circ \mu.
$$
The second summand is exactly the right-hand side of~\eqref{eq:differential_koszul}. The first summand can be written as follows
$$
(\mu\circ f)(T)=\sum_v(-1)^\delta x_v\cdot f(T\setminus v),
\eqno(\numb)\label{eq:m_circ_f}
$$
where the sum is taken over the univalent vertices of the forest $T$. The sign $(-1)^\delta$ is  the Koszul sign of permutation taking the only edge adjacent to $v$ on the first place of the orientation set of $T$, and $T\setminus v$ denotes the forest obtained from $T$ by removing the vertex $v$ and the edge adjacent to it. The total complex~\eqref{eq:deform_expl} can be written as
$$
\bigoplus_{k\geq 2}\Sigma^{m+1}\mathrm{hom}_{\Sigma_k}\bigl(\overline{\HH}_*(\mathrm{C}(k,\R^m),\Q),\HH_*(\mathrm{C}(k,\R^n),\Q)\bigr).
$$
This complex contains an acyclic complex spanned by degeneracies. The quotient-complex is exactly  the (shifted by $m+1$ gradings) complex $\calK^{m,n}_\HH$:
$$
\bigoplus_{k\geq 2}\Sigma^{m+1}\mathrm{hom}_{\Sigma_k}\left(\overline{\HH}_*(\mathrm{C}(k,\R^m),\Q),\hat\HH_*(\mathrm{C}(k,\R^n),\Q)\right),
\eqno(\numb)\label{eq:unital_def}
$$
computing the reduced rational homology of $\Ebarmn$. This  is straightforward and similar to the fact that the Hochschild complex of the Poisson operad is a direct sum of its normalized subcomplex and an acyclic one spanned by degeneracies.
The complex~\eqref{eq:unital_def} can also be interpreted as the complex of unital deformations of the morphism $\Poiss_{m-1}\stackrel{i_*}{\longrightarrow}\Poiss_{n-1}$. By {\it unital} we mean deformations
preserving the zero-ary operation $1\!\! 1\in\Poiss_{m-1}(0)$ (the deformation of higher operations are annihilated when applied to $1\!\! 1$) if the latter one is added to the operad $\Poiss_{m-1}$.

\section{Euler characteristics of the double splitting}\label{s:Euler}
\sloppy
In this section we investigate the generating function of the Euler characteristics of the double splitting in the rational homology of $\Ebarmn$. Let $\chi_{st}$ denote the Euler characteristic of the summand $\calK^{m,n}_\HH(s,t)$ of complexity~$t$ and Hodge degree $s$, see~\eqref{eq:compl_der_maps_H}. This summand is finite dimensional, so the homology should also be finite dimensional and the Euler characteristic is well defined. Let
$$
F_{mn}(x,u)=\sum_{st}\chi_{st}x^su^t
\eqno(\numb)\label{eq:Fmn}
$$
denote the generating function of the Euler characteristics of the double splitting in $\HH_*(\Ebarmn;\Q)$. It is clear that this function depends only on the parities of $m$ and $n$.

\begin{theorem}\label{t:gen_funct}
One has
$$
F_{mn}(x,u)=\prod_{\ell\geq 1}\frac{\Gamma((-1)^{n-1}E_\ell(\frac 1u)-(-1)^{m-1}E_\ell(x))}{((-1)^{n-1}\ell u^\ell)^{(-1)^{m-1}E_\ell(x)}\Gamma((-1)^{n-1}E_\ell(\frac 1u))},
\eqno(\numb)\label{eq132}
$$
where each factor in the product is understood as the asymptotic expansion of the underlying function when $u$ is  complex  and $(-1)^{n-1}u^\ell\to +0$ and  $x$ is considered as a fixed parameter. In the above $\Gamma(y)$ is the gamma function which is $(y-1)!$ on positive integers, $E_\ell(y)=\frac 1\ell\sum_{d|\ell}\mu(d)y^{\ell/d}$ (where $\mu(-)$ is the standard M\"obius function).
\end{theorem}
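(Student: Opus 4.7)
My plan is to compute each $\chi_{s,t}$ as the Euler characteristic of the finite-dimensional complex $\calK^{m,n}_{\HH}(s,t)$ of~\eqref{eq:compl_der_maps_H} and to organize the double generating function $F_{mn}(x,u)$ as a Hall inner product of two explicit symmetric-function-valued generating series. Since the differential preserves both $s$ and $t$, $\chi_{s,t}$ equals the alternating sum over $k$ of $\dim\mathrm{hom}_{\Sigma_k}(KQ^m_s(k),\hat{\HH}_{t(n-1)}(\mathrm{C}(k,\R^n),\Q))$, with signs coming from the homological degrees $(m-1)s+k$ of the source and $t(n-1)$ of the target. Using $\dim\mathrm{hom}_{\Sigma_k}(V,W)=\langle\mathrm{ch}\,V,\mathrm{ch}\,W\rangle$ with the standard Hall pairing $\langle p_\lambda,p_\mu\rangle=z_\lambda\delta_{\lambda\mu}$ on the ring $\Lambda$ of symmetric functions, the triple sum collapses into a single inner product $F_{mn}(x,u)=\langle\mathcal{A}_m(x;p),\mathcal{B}_n(u;p)\rangle$ of two symmetric-function-valued series to be identified explicitly.

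The two series come from the operadic input. From $KQ^m_s=Q^m_s\circ\coLie[1]$ of Subsection~\ref{ss:proj_cofibr_model}, the identification of $Q^m_s$ with the trivial or sign representation of $\Sigma_s$ according to the parity of $m$, and the classical formula $\mathrm{ch}(\Lie(\ell))=\mathrm{ch}(\coLie(\ell))=E_\ell(p):=\tfrac{1}{\ell}\sum_{d\mid\ell}\mu(d)p_d^{\ell/d}$, a plethystic computation expresses $\mathcal{A}_m(x;p)$ as an infinite product over $\ell\geq 1$ of one-variable series in the single power sum $p_\ell$, with exponents involving $(-1)^{m-1}E_\ell(x)$ (once the multivariable $E_\ell(p_1,p_2,\ldots)$ is specialized so as to become the single-variable polynomial $E_\ell(y)$). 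Likewise the operadic identification $\HH_*(\balls_n,\Q)=\Comm\circ\Lie[n-1]$ of Subsection~\ref{sss:operads_examples}, combined with the cross-effect description of $\hat{\HH}_*(\mathrm{C}(\bullet,\R^n),\Q)$ and with M\"obius inversion in the complexity grading, yields $\mathcal{B}_n(u;p)$ as an analogous product over $\ell\geq 1$ with exponents involving $(-1)^{n-1}E_\ell(1/u)$ and multiplicative weights of $(-1)^{n-1}u^\ell$ per insertion of $p_\ell$.

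Because both series factor over disjoint variables $p_\ell$, the Hall product decouples into
\begin{equation*}
F_{mn}(x,u)=\prod_{\ell\geq 1}\langle f_\ell(x;p_\ell),g_\ell(u;p_\ell)\rangle_\ell,\qquad\langle p_\ell^j,p_\ell^k\rangle_\ell=\ell^j\,j!\,\delta_{jk}.
\end{equation*}
Expanding both $f_\ell$ and $g_\ell$ as formal power series in $p_\ell$ and pairing term by term turns each $\ell$-th factor into a hypergeometric-type sum involving Pochhammer symbols built from $(-1)^{n-1}E_\ell(1/u)$ and $(-1)^{m-1}E_\ell(x)$ weighted by powers of $(-1)^{n-1}u^\ell/\ell$. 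Applying the Pochhammer-to-Gamma identity $(c)_r=\Gamma(c+r)/\Gamma(c)$ identifies this sum, as an asymptotic expansion in the regime $(-1)^{n-1}u^\ell\to 0^+$ with $x$ fixed, with the ratio
\begin{equation*}
\frac{\Gamma((-1)^{n-1}E_\ell(1/u)-(-1)^{m-1}E_\ell(x))}{((-1)^{n-1}\ell u^\ell)^{(-1)^{m-1}E_\ell(x)}\,\Gamma((-1)^{n-1}E_\ell(1/u))},
\end{equation*}
which is precisely the $\ell$-th factor of~\eqref{eq132}.

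The main obstacle is twofold. First, the consistent bookkeeping of three independent sign conventions---the sign representation of $Q^m_s$ when $m$ is odd, the operadic suspension sign of $\coLie[1]$, and the $(n-1)$-fold operadic suspension sign of $\Lie[n-1]$ inside $\HH_*(\balls_n,\Q)$---is delicate, and any inconsistency would shift signs or exponents in the final answer. Second, the one-variable pairings above are divergent in any literal analytic sense: they must be interpreted as asymptotic expansions in the regime specified by the theorem, and every step, including the passage from the factor-by-factor pairing to the infinite product over $\ell$, must be justified at the level of formal series in this asymptotic sense.
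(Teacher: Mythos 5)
Your proposal is essentially the same argument as the paper's, phrased in the symmetric-function dialect rather than the cycle-index dialect: the Hall pairing $\langle p_\ell^j,p_\ell^k\rangle=\ell^j j!\,\delta_{jk}$ you invoke is literally Lemma~\ref{l:gener_hom} (the differential operator $Z_V(a_\ell\leftarrow\partial/\partial a_\ell)$ acting on $Z_W(a_\ell\leftarrow\ell a_\ell)$ and setting $a_\ell=0$), and your plethystic decompositions $\HH_*(\balls_n)=\Comm\circ\Lie[n-1]$ and $KQ^m_s=Q^m_s\circ\coLie[1]$ reproduce what the paper extracts from Lehrer's theorem (Proposition~\ref{p:Sn_conf_spaces}), the cross-effect normalization (Proposition~\ref{p:Sn_conf_spaces_norm}), and Poincar\'e duality (Proposition~\ref{p:Sn_conf_spaces_comp}). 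The final Pochhammer-to-Gamma identity you describe is exactly what the paper cites from~\cite[Proposition~15.7]{Turchin}, including the asymptotic interpretation.
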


Tables~\ref{table:H_oo},~\ref{table:H_oe},~\ref{table:H_eo},~\ref{table:H_ee} describe $\chi_{st}$ for complexities $t\leq 23$. From these tables the above generating functions start as follows:
\begin{align*}
F_{oo}(x,u)&=1+x^2u+(x^4+x^2-x)u^2+(x^6+x^4-x^3+x^2-x)u^3+\ldots\\
F_{oe}(x,u)&=1-xu+x^3u^2+(-x^4-x^2+x)u^3+\ldots\\
F_{eo}(x,u)&=1+(-x^3+x)u^2+(-x^3+x)u^3+\ldots\\
F_{ee}(x,u)&=1+(-x^2+x)u+(-x^3+x^2)u^2+(-x^4+2x^3-x)u^3+\ldots
\end{align*}
The subscript $o$ refers to the case when the corresponding variable $m$ or $n$ is odd, and the subscript $e$ refers to the situation when $m$ or $n$ is even.

The case $m=1$ of this formula was  proved in~\cite{Turchin}. We refer the reader to this paper which explains this formula in more details. In the rest of the section we sketch the main steps of the computations.

From Subsection~\ref{ss:compl_der_maps} the complex computing $\HH_*(\Ebarmn,\Q)$ in complexity $t$ and Hodge degree $s$ is
$$
\left(\bigoplus_{k=s}^{2t}\mathrm{hom}_{\Sigma_{k}}\left(KQ_s^m(k),\hat\HH_{t(n-1)}(\mathrm{C}(k,\R^n);\Q)\right),\, \partial\right).
$$

To recall \eqref{eq:koszul_Q_config_homology}, the $\Sigma_{k}$-module $KQ_s^m(k)$ is $\widetilde{\HH}_{k+s(m-1)}(S^{mk}/\Delta^kS^m;\Q)=\overline{\HH}_{k+s(m-1)}(\mathrm{C}(k,\R^m);\Q)$ (assuming $m\geq 2$), where $\overline{\HH}$ denotes the locally compact singular homology. The entire complex computing $\HH_*(\Ebarmn;\Q)$ is
$$
\left(\bigoplus_{k\geq 0}\mathrm{hom}_{\Sigma_k}\left(\overline{\HH}_*(\mathrm{C}(k,\R^m);\Q),\hat\HH_*(\mathrm{C}(k,\R^n),\Q)\right),\, \partial\right).
$$
The above formula is also true for $m=1$, but in that case the splitting over the Hodge degree can not be defined in the same way. For $m=1$ this splitting is similar to the Hodge-type splitting in the Hochschild (co)homology of a commutative algebra~\cite{Turchin}. As we mentioned earlier the case $m=1$ of Theorem~\ref{t:gen_funct} was done in~\cite[Theorem~13.1]{Turchin}. For simplicity of exposition we will be assuming below that $m\geq 2$.

We start by introducing some standard notation. For each permutation $\sigma\in \Sigma_k$ define $Z(\sigma)$, the {\it cycle indicator} of $\sigma$, by
$$
Z(\sigma)=\prod_\ell a_\ell^{j_\ell(\sigma)},
$$
where $j_\ell(\sigma)$  is the number of $\ell$-cycles of $\sigma$ and where $a_1$, $a_2$, $a_3$, $\ldots$ is an infinite family of commuting variables.

Let $\rho^V\colon S_k\to GL(V)$ be a representation of $\Sigma_k$. Define $Z_V(a_1,a_2,\ldots)$ the {\it cycle index} of $V$, by
$$
Z_V(a_1,a_2,\ldots)=\frac 1{n!}\sum_{\sigma\in \Sigma_k}\tr \rho^V(\sigma)\cdot Z(\sigma).
$$
Similarly for a symmetric sequence $W=\{W(k),k\geq 0\}$ --- sequence of $\Sigma_k$-modules, one defines its {\it cycle index sum} $Z_W$ by
$$
Z_W(a_1,a_2,\ldots)=\sum_{k=0}^{+\infty}Z_{W(k)}(a_1,a_2,\ldots).
$$

The following lemma is well known~\cite[Lemma~15.4]{Turchin}.

\begin{lemma}\label{l:gener_hom}
Let $V$ and $W$ be two $\Sigma_k$-modules, then
$$
\dim {\mathrm{hom}}_{\Sigma_k}(V,W)=\bigl(Z_V(a_\ell\leftarrow\partial/{\partial a_\ell},\,\ell\in\BN)\,\,Z_W(a_\ell\leftarrow \ell a_\ell,\,\ell\in\BN)\bigr)\Bigr|_{\substack{a_\ell=0,\\ \ell\in\BN}}.
$$
\end{lemma}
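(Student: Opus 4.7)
My plan is to prove this as a direct translation of the standard character-theoretic formula $\dim\mathrm{hom}_{\Sigma_k}(V,W)=\langle\chi_V,\chi_W\rangle_{\Sigma_k}=\frac{1}{k!}\sum_{\sigma\in\Sigma_k}\chi_V(\sigma)\chi_W(\sigma)$ into the language of cycle indices. The key observation is that the bilinear pairing on polynomials in $a_1,a_2,\ldots$ defined by $(p,q)\mapsto \bigl(p(\partial/\partial a_\ell)\,q\bigr)\bigr|_{a_\ell=0}$ makes the monomial basis orthogonal: one has $\bigl(\prod_\ell \partial_{a_\ell}^{i_\ell}\bigr)\bigl(\prod_\ell a_\ell^{j_\ell}\bigr)\bigr|_{a=0}=\prod_\ell i_\ell!\,\delta_{i_\ell,j_\ell}$.

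First I would expand both sides. Write $Z_V(a_\ell\leftarrow\partial_{a_\ell})=\frac{1}{k!}\sum_{\sigma}\chi_V(\sigma)\prod_\ell\partial_{a_\ell}^{j_\ell(\sigma)}$ and $Z_W(a_\ell\leftarrow\ell a_\ell)=\frac{1}{k!}\sum_{\tau}\chi_W(\tau)\prod_\ell(\ell a_\ell)^{j_\ell(\tau)}$. Multiplying and evaluating at $a_\ell=0$, only pairs $(\sigma,\tau)$ with identical cycle type survive by orthogonality, contributing
$$
\frac{1}{(k!)^2}\sum_{\substack{\sigma,\tau\\ j_\ell(\sigma)=j_\ell(\tau)\,\forall\ell}}\chi_V(\sigma)\chi_W(\tau)\prod_\ell \ell^{j_\ell(\sigma)}j_\ell(\sigma)!.
$$
Since $\chi_W$ is a class function, $\chi_W(\tau)=\chi_W(\sigma)$ whenever $\tau$ is conjugate to $\sigma$, so the inner sum collapses to a sum over $\sigma$ weighted by the size of its conjugacy class times the centralizer product.

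The last step is to recognize the combinatorial factor: $\prod_\ell \ell^{j_\ell(\sigma)}j_\ell(\sigma)!$ equals $|\mathrm{Cent}_{\Sigma_k}(\sigma)|$, while the number of $\tau$ conjugate to $\sigma$ equals $k!/|\mathrm{Cent}_{\Sigma_k}(\sigma)|$. Their product is $k!$, so the expression simplifies to $\frac{1}{k!}\sum_{\sigma}\chi_V(\sigma)\chi_W(\sigma)$, which is exactly $\dim\mathrm{hom}_{\Sigma_k}(V,W)$ by the standard orthogonality/Schur's-lemma formula for the multiplicity pairing of characters.

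There is no real obstacle here; the proof is a one-page bookkeeping exercise. The only subtlety worth mentioning is the consistency of signs and combinatorial prefactors between the two substitutions $a_\ell\mapsto\partial_{a_\ell}$ and $a_\ell\mapsto\ell a_\ell$: the factor $\ell^{j_\ell(\sigma)}$ introduced in $Z_W$ is precisely what is needed so that after differentiation the surviving monomials recover the centralizer order $\prod_\ell\ell^{j_\ell}j_\ell!$, which is the mechanism that converts the cycle-index pairing into the character inner product.
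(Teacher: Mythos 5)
Your proof is correct. The paper itself does not prove this lemma; it cites it as well known (referencing~\cite[Lemma~15.4]{Turchin}), so there is no internal proof to compare against. Your argument is the standard one: expand both cycle indices over group elements, use the orthogonality of the monomial basis under the pairing $(p,q)\mapsto\bigl(p(\partial_{a_\ell})\,q\bigr)\bigr|_{a=0}$ to restrict to pairs $(\sigma,\tau)$ of equal cycle type, and then observe that the surviving combinatorial weight $\prod_\ell \ell^{j_\ell(\sigma)}j_\ell(\sigma)!$ is exactly the centralizer order $|\mathrm{Cent}_{\Sigma_k}(\sigma)|$, which together with the conjugacy-class size $k!/|\mathrm{Cent}_{\Sigma_k}(\sigma)|$ collapses the double sum to the character inner product $\frac{1}{k!}\sum_\sigma\chi_V(\sigma)\chi_W(\sigma)=\dim\mathrm{hom}_{\Sigma_k}(V,W)$. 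The only point worth making explicit in a clean write-up is that no complex conjugate appears because all characters of $\Sigma_k$ are rational, so $\chi_V=\overline{\chi_V}$; otherwise every step is watertight.
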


In the above formula $Z_W(a_\ell\leftarrow \ell a_\ell,\,\ell\in\BN)$ is a polynomial obtained from $Z_W(a_1,a_2,\ldots)$ by replacing each variable $a_\ell$ by $\ell a_\ell$. The expression $Z_V(a_\ell\leftarrow\partial/{\partial a_\ell},\,\ell\in\BN)$ is a differential operator obtained from the polynomial $Z_V(a_1,a_2,\ldots)$ by replacing each $a_\ell$ by $\partial/\partial a_\ell$. The differential operator is applied to the polynomial and at the end one takes all the variables $a_\ell,$ $\ell\in\BN$, to be zero.

In case $V=\bigoplus_iV_i$, $W=\bigoplus_iW_i$ are graded $S_k$-modules, and $\dim{\mathrm{hom}}_{\Sigma_k}(V,W)$ is the graded dimension:
$$
\dim{\mathrm{hom}}_{\Sigma_k}(V,W)=\sum_{i,j\in\BZ}\dim {\mathrm{hom}}_{\Sigma_k}(V_i,W_j)z^{j-i},
$$
and $Z_V$, $Z_W$ are {\it graded} cycle indices:
\begin{align*}
Z_V(z;a_1,a_2,\ldots)=&\sum_{i\in \BZ}Z_{V_i}(a_1,a_2,\ldots)z^i,\\
Z_W(z;a_1,a_2,\ldots)=&\sum_{i\in \BZ}Z_{W_i}(a_1,a_2,\ldots)z^i.
\end{align*}
Then
$$
\dim\,{\mathrm{hom}}_{\Sigma_k}(V,W)=Z_V(1/z;a_\ell\leftarrow\partial/{\partial a_\ell},\,\ell\in\BN)\,\,Z_W(z;a_\ell\leftarrow \ell a_\ell,\,\ell\in\BN)\Bigl|_{\substack{a_\ell=0,\\ \ell\in\BN}}.
\eqno(\numb)\label{eq:gr_dim}
$$

 We will be considering bigraded symmetric sequences. Similarly in our computations we will add one more variable $x$ or $u$ responsible for the second grading.

Next step is to find the graded cycle index sum of the symmetric sequences $\overline{\HH}_*(\mathrm{C}(\bullet,\R^m);\Q)$, and $\hat\HH_*(\balls_n(\bullet),\Q)$.
The symmetric group action on the homology of configuration spaces $\mathrm{C}(k,\R^n)$ is well understood~\cite{CT:RCCS,Lehrer,LO:ASGCH}.

\begin{proposition}\label{p:Sn_conf_spaces}
The graded cycle index sum for the symmetric sequence
$$
\HH_*(\mathrm{C}(\bullet,\R^n),\Q)=\left\{\HH_*(\mathrm{C}(k,\R^n),\Q)|\, k\geq 0\right\}
$$
is given by the following formula:
$$
Z_{\HH_*(\mathrm{C}(\bullet,\R^n),\Q)}(z;a_1,a_2,\ldots)=\prod_{\ell=1}^{+\infty}\left(1+(-1)^{n}(-z)^{(n-1)\ell}a_\ell\right)^{(-1)^{n}E_\ell\left(\frac 1{(-z)^{n-1}}\right)},
\eqno(\numb)\label{eq_Sn_conf_spaces}
$$
where $E_\ell(y)=\frac 1\ell\sum_{d|\ell}\mu(d)y^{\frac \ell d}.$
\end{proposition}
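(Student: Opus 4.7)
The plan is to identify $\HH_*(\mathrm{C}(\bullet,\R^n),\Q)$ with the $(n-1)$-Poisson operad $\Poiss_{n-1}$, as recalled in Subsection~\ref{sss:operads_examples}, and then exploit the Poincar\'e--Birkhoff--Witt decomposition
$$\Poiss_{n-1} \cong \Comm \circ \Lie[n-1]$$
as symmetric sequences of graded $\Q$-vector spaces. Since the cycle index sum of an operadic composition $P\circ Q$ is the plethystic composition of the cycle index sums of $P$ and $Q$, computing $Z_{\HH_*(\mathrm{C}(\bullet,\R^n),\Q)}$ reduces to determining the graded cycle index of $\Lie[n-1]$ and then composing with $Z_\Comm = \exp\!\left(\sum_{\ell \geq 1} a_\ell/\ell\right)$.

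First, I would recall the classical cycle index of the Lie operad,
$$Z_{\Lie(k)}(a_1, a_2, \ldots) = \frac{1}{k}\sum_{d\mid k}\mu(d)\, a_d^{k/d},$$
which when summed over $k\geq 1$ gives $-\sum_{\ell \geq 1} (\mu(\ell)/\ell)\log(1-a_\ell)$. Then I would track the two modifications imposed by the operadic suspension $[n-1]$: the tensoring with $(\sign_k)^{n-1}$, which amounts to the substitution $a_\ell \mapsto (-1)^{(\ell-1)(n-1)} a_\ell$ in the cycle index; and the homological shift of $\Lie(k)$ by $(k-1)(n-1)$, which contributes a factor $z^{(k-1)(n-1)}$ to each summand. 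Combined, these yield an explicit expression for the graded cycle index $Z_{\Lie[n-1]}(z; a_1, a_2, \ldots)$.

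Next I would form the plethysm
$$Z_\Comm(Z_{\Lie[n-1]})(z;a_1,a_2,\ldots) = \exp\!\left(\sum_{\ell \geq 1}\frac{1}{\ell}\,Z_{\Lie[n-1]}(z^\ell; a_\ell, a_{2\ell}, a_{3\ell}, \ldots)\right).$$
Interchanging the order of summation in the resulting double sum (over $\ell$ and over divisors) and applying the classical Möbius identity $\sum_{d\mid N}\mu(d) = [N=1]$ collapses the divisor sum to a single term; the identity $\exp(-c\log(1-x)) = (1-x)^{-c}$ then converts the exponential into an infinite product indexed by $\ell$. Collecting signs one recognises the result as~\eqref{eq_Sn_conf_spaces}.

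The main obstacle is the careful bookkeeping of the three independent sources of signs: the sign twist by $\sign^{n-1}$ from the operadic suspension, the Koszul signs $(-1)^{(k-1)(n-1)}$ arising from the $z$-grading of the suspended generators, and the Möbius signs $\mu(\ell)$. Verifying that these combine to produce the uniform factor $(-1)^n$ and the expression $(-z)^{(n-1)\ell}$ appearing both in the base and in the exponent $(-1)^n E_\ell(1/(-z)^{n-1})$ of the final product is the delicate part of the calculation.
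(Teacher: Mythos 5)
Your derivation from the Poincar\'e--Birkhoff--Witt splitting $\Poiss_{n-1}\cong\Comm\circ\Lie[n-1]$ followed by a plethysm computation is a genuinely different route from the paper's, which obtains \eqref{eq_Sn_conf_spaces} simply as a reformulation of a character formula of Lehrer (\cite[Theorem~B]{Lehrer}). Your approach is more self-contained and, once carried out correctly, gives a purely operadic proof. As written, however, the sketch contains two errors that do not cancel.

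First, the plethysm rule
$Z_\Comm(Z_{\Lie[n-1]})(z;a_1,a_2,\ldots)=\exp\bigl(\sum_{\ell\ge 1}\tfrac1\ell Z_{\Lie[n-1]}(z^\ell;a_\ell,a_{2\ell},\ldots)\bigr)$
is wrong in the graded setting: it omits the Koszul sign coming from symmetric powers of graded vector spaces. If a single generator sits in homological degree $i$, an $\ell$-cycle acts on its $\ell$-fold tensor power with trace $(-1)^{i(\ell-1)}$, so the power-sum plethysm $p_\ell$ must send $z\mapsto(-1)^{\ell-1}z^\ell$ (equivalently $-z\mapsto(-z)^\ell$), not $z\mapsto z^\ell$. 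This sign is exactly the source of all the $(-z)$'s in \eqref{eq_Sn_conf_spaces}; omitting it replaces $(-z)^{(n-1)\ell}$ by $(-1)^{n-1}z^{(n-1)\ell}$, which is wrong for $n$ even (the first discrepancy appears in arity $4$, so it is not caught by an arity-$2$ check). The ``Koszul signs $(-1)^{(k-1)(n-1)}$'' you list are those of the suspended generators themselves, not this plethysm sign, which is a separate phenomenon.

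Second, the M\"obius divisor sum does not ``collapse to a single term''. In the \emph{ungraded} case $\Comm\circ\Lie\cong\Assoc$ it indeed collapses via $\sum_{d\mid N}\mu(d)=[N=1]$, giving $(1-a_1)^{-1}$. But in the graded case the summand depends on the index $\ell$ of the outer plethysm sum (through $(-z)^{-(n-1)\ell}$), not only on the product $p=\ell d$, so no cancellation occurs. Rather, the divisor sum survives and is precisely the exponent $E_\ell$. Explicitly, with the corrected sign and $w=(-z)^{n-1}$ one finds
$$
\log Z=(-1)^n\sum_{p\ge 1}\Bigl(\frac1p\sum_{\ell\mid p}\mu(p/\ell)\,w^{-\ell}\Bigr)\log\bigl(1+(-1)^n w^{p}a_p\bigr),
$$
and the inner sum is $E_p(1/w)$; exponentiating yields \eqref{eq_Sn_conf_spaces}. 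With these two corrections your outline goes through.
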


\begin{proof} It is an easy consequence of~\cite[Theorem~B]{Lehrer}.
\end{proof}

Let us add another variable $u$ that will be responsible for the complexity, which is the homology degree divided by $(n-1)$. Thus the $u$-degree is the $z$-degree divided by $(n-1)$:
$$
Z_{\HH_*(\mathrm{C}(\bullet,\R^n),\Q)}(z,u;a_1,a_2,\ldots)=\prod_{\ell=1}^{+\infty}\left(1+(-1)^{n}((-z)^{(n-1)}u)^\ell a_\ell\right)^{(-1)^{n}E_\ell\left(\frac 1{(-z)^{n-1}u}\right)}.
\eqno(\numb)\label{eq_Sn_conf_spaces_complex}
$$

The following result is easily obtained from~\eqref{eq_Sn_conf_spaces_complex}, see~\cite[Section~15.2]{Turchin}.

\begin{proposition}[\cite{Turchin}]\label{p:Sn_conf_spaces_norm}
One has
\begin{equation}
Z_{\hat\HH_*(\mathrm{C}(\bullet,\R^n),\Q)}(z,u;a_1,a_2,\ldots)=\prod_{\ell=1}^{+\infty}e^{-\frac {a_\ell}\ell}\left(1+(-1)^{n}((-z)^{(n-1)}u)^\ell a_\ell\right)^{(-1)^{n}E_\ell\left(\frac 1{(-z)^{n-1}u}\right)},
\label{eq:Sn_conf_spaces_norm}
\end{equation}
where the variable $u$ is responsible for the complexity and $z$ for the homology degree.
\end{proposition}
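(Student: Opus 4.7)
The plan is to derive this formula from Proposition~\ref{p:Sn_conf_spaces} (with the complexity grading inserted) by exploiting the fact that, by definition, $\hat\HH_*(\mathrm{C}(\bullet,\R^n),\Q)$ is the Pirashvili cross-effect of the $\Gamma$-module $\HH_*(\mathrm{C}(\bullet,\R^n),\Q)\cong \HH_*(\mathrm{Emb}_*(\bullet,S^n),\Q)$ (see Subsection~\ref{sss:inf_bimod}). The cycle index sum of a $\Gamma$-module and that of its cross-effect differ only by an explicit multiplicative factor, and multiplying the formula~\eqref{eq_Sn_conf_spaces_complex} by that factor will give~\eqref{eq:Sn_conf_spaces_norm} at once. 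The key point to establish is therefore the multiplicative relationship between the two cycle index sums.

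The first step is to record that, for any right $\Gamma$-module $F$ in $\Q$-vector spaces with cross-effect $N=\mathrm{cr}F$, one has a natural decomposition of $\Sigma_k$-modules
$$
F(k_+)\;\cong\;\bigoplus_{S\subseteq\{1,\dots,k\}}N(S).
$$
This is essentially the Dold-Pirashvili equivalence~\cite{PirashviliDold}: the kernel-of-forgetting splitting produced by the cross-effect construction assembles, upon iteration over all subsets $S\subseteq k$, into the above direct sum decomposition. In our setting the summands on the right are exactly the subspaces of $\HH_*(\mathrm{C}(k,\R^n),\Q)$ spanned by those classes supported on the configuration of points indexed by $S$.

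The second step is to translate this decomposition into a cycle-index identity. Write $V$ for the trivial symmetric sequence $\{V(k)=\Q\}_{k\ge 0}$ with trivial $\Sigma_k$-actions. A direct computation using the orbit count of cycle types gives
$$
Z_V(a_1,a_2,\ldots)\;=\;\sum_{k\ge 0}\frac{1}{k!}\sum_{\sigma\in\Sigma_k}Z(\sigma)\;=\;\prod_{\ell\ge 1}\exp\!\bigl(a_\ell/\ell\bigr).
$$
Because the cycle index sum of a $\Sigma_k$-module of the form $\bigoplus_{A\sqcup B=k}N(A)\otimes V(B)$ is the product $Z_N\cdot Z_V$ (this is the species exponential formula: the $\sigma$-fixed subsets $S\subseteq k$ are precisely unions of cycles of $\sigma$, and the corresponding combinatorial bookkeeping separates into the product of traces on the two factors), the $S$-decomposition of the previous step yields
$$
Z_F(z,u;a_1,a_2,\ldots)\;=\;Z_{\mathrm{cr}F}(z,u;a_1,a_2,\ldots)\cdot\prod_{\ell\ge 1}e^{a_\ell/\ell},
$$
with the variables $z,u$ carried through unchanged since the decomposition is degree-preserving.

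The third and final step is to invert this identity and apply it: $Z_{\hat\HH_*}=Z_{\HH_*}\cdot\prod_\ell e^{-a_\ell/\ell}$. Substituting the expression from~\eqref{eq_Sn_conf_spaces_complex} for the right-hand factor, the $e^{-a_\ell/\ell}$ factors combine directly with the $\ell$-th factor of the product to produce exactly the right-hand side of~\eqref{eq:Sn_conf_spaces_norm}. The only delicate point in the argument is the subset decomposition of the first step; once one is willing to invoke the Pirashvili-Dold equivalence (as in~\cite{PirashviliDold}, or in the reformulation used in~\cite{Turchin}), the rest is a formal manipulation of cycle indices.
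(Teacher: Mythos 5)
Your argument is correct and, modulo notation, coincides with the derivation in the cited reference~\cite[Section~15.2]{Turchin}: both hinge on the Pirashvili decomposition $F(k_+)\cong\bigoplus_{S\subseteq k}\mathrm{cr}F(S)$ together with the species Cauchy-product identity $Z_{N\cdot V}=Z_N\cdot Z_V$, where $V$ is the exponential species with $Z_V=\prod_\ell e^{a_\ell/\ell}$. The paper treats this as routine and merely cites Turchin, so your write-up supplies the missing details in essentially the intended way.
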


\begin{proposition}\label{p:Sn_conf_spaces_comp}
One has
\begin{equation}
 Z_{\overline{\HH}_*(\mathrm{C}(\bullet,\R^m),\Q)}(z,x;\,a_\ell,\,\ell\in\BN)=\prod_{\ell=1}^{+\infty}(1+(-z)^{\ell}a_\ell)^{(-1)^{m}
 E_\ell((-z)^{m-1}x)},
 \label{eq:Sn_conf_spaces_comp}
\end{equation}
where the variable $x$ is responsible for the Hodge degree and $z$ is responsible for the homology degree.
\end{proposition}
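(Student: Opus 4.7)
The plan is to deduce the formula from Proposition~\ref{p:Sn_conf_spaces_complex} via equivariant Poincar\'e duality. Since $\mathrm{C}(k,\R^m)$ is an orientable open $mk$-manifold, and a permutation $\sigma\in\Sigma_k$ acts on $(\R^m)^k$ so that its effect on any top form is multiplication by $\sign(\sigma)^m$, the $\Sigma_k$-equivariant Poincar\'e duality, combined with the self-duality of $\Sigma_k$-representations over $\Q$, yields
$$
\overline{\HH}_i(\mathrm{C}(k,\R^m),\Q)\cong \HH_{mk-i}(\mathrm{C}(k,\R^m),\Q)\otimes \sign^{\otimes m}
$$
as $\Sigma_k$-representations.

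At the level of cycle indices, tensoring with $\sign^{\otimes m}$ performs the substitution $a_\ell\mapsto (-1)^{m(\ell-1)}a_\ell$, since $\sign(\sigma)=\prod_\ell(-1)^{(\ell-1)j_\ell(\sigma)}$. The Hodge degree $s$ of a class in $\overline{\HH}_i(\mathrm{C}(k,\R^m),\Q)$ equals $(i-k)/(m-1)$, because $KQ_s^m(k)\simeq\overline{\HH}_{(m-1)s+k}(\mathrm{C}(k,\R^m),\Q)$ by \eqref{eq:koszul_Q_config_homology} and \eqref{eq:one_pt_comp_isom_conf_sp}. Setting $j=mk-i$ and $t'=j/(m-1)$, the bigrading weight $z^ix^s$ becomes $(xz^m)^k\cdot(z^{m-1}x)^{-t'}$, so the left-hand side of \eqref{eq:Sn_conf_spaces_comp} equals
$$
\sum_{k,t'}(xz^m)^k(z^{m-1}x)^{-t'}\,Z_{\HH_{(m-1)t'}(\mathrm{C}(k,\R^m),\Q)}\bigl((-1)^{m(\ell-1)}a_\ell\bigr).
$$

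This expression is obtained from \eqref{eq_Sn_conf_spaces_complex} (with $n=m$) by the simultaneous substitutions $z\mapsto z^{-1}$, $u\mapsto x^{-1}$, and $a_\ell\mapsto (-1)^{m(\ell-1)}(xz^m)^\ell a_\ell$, the last absorbing both the sign twist from Poincar\'e duality and the tracking $y=xz^m$ for the number of configuration points. A direct computation shows $1/((-z^{-1})^{m-1}x^{-1})=(-z)^{m-1}x$, so the exponent of the $\ell$-th factor becomes $(-1)^m E_\ell((-z)^{m-1}x)$, while the base transforms to $1+(-1)^{m+m(\ell-1)+(m-1)\ell}z^\ell a_\ell=1+(-z)^\ell a_\ell$ (using $m+m(\ell-1)+(m-1)\ell=\ell(2m-1)\equiv \ell\pmod 2$), which is precisely the right-hand side of \eqref{eq:Sn_conf_spaces_comp}.

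The main obstacle is the bookkeeping of signs and substitutions: one must correctly identify the orientation twist $\sign^{\otimes m}$ in the equivariant Poincar\'e duality, then carefully transport the Hodge and homology gradings on the $\overline{\HH}_*$-side to the complexity and $k$-tracking on the $\HH_*$-side (through the relation $s=k-t'$), and finally verify that the three simultaneous substitutions conspire to produce the clean sign $(-1)^\ell$ in the base and the clean argument $(-z)^{m-1}x$ in the exponent.
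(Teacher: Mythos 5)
Your proof follows essentially the same route as the paper's: apply Poincar\'e duality with the $\sign^{\otimes m}$ orientation twist to $\mathrm{C}(k,\R^m)$, observe that the duality swaps Hodge degree and complexity via $s=k-t'$ and reverses the homological degree through $i\mapsto mk-i$, encode all of this as the substitutions $z\mapsto 1/z$, $u\mapsto 1/x$, $a_\ell\mapsto(-1)^{(\ell-1)m}x^\ell z^{m\ell}a_\ell$ in the cycle index of Proposition~\ref{p:Sn_conf_spaces_norm}, and simplify. Your write-up spells out the sign cancellation in the base, $m+(m-1)\ell+m(\ell-1)=\ell(2m-1)\equiv\ell\pmod 2$, and the identity $1/((-z^{-1})^{m-1}x^{-1})=(-z)^{m-1}x$ more explicitly than the paper does, but the decomposition, the key lemma invoked, and the bookkeeping are identical; the use of homology plus rational self-duality in place of the paper's cohomology is an immaterial variant.
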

\begin{proof}
From the Poincar\'e duality one has an isomorphism of $\Sigma_\ell$-modules:
$$
\overline{\HH}_*(\mathrm{C}(\ell,\R^m),\Q)\simeq\HH^{\ell m-*}(\mathrm{C}(\ell,\R^m),\Q)\otimes (\sign_\ell)^{\otimes m}.
\eqno(\numb)\label{eq:poincare}
$$
The sign representation appears due to the fact that the Poincar\'e duality uses the orientation of the variety. From the above one has
$$
\overline{\HH}_{i(m-1)+\ell}(\mathrm{C}(\ell,\R^m),\Q)\simeq\HH^{(\ell m-i)(m-1)}(\mathrm{C}(\ell,\R^m),\Q)\otimes (\sign_\ell)^{\otimes m}.
$$
So, the part of $\HH_*(\mathrm{C}(\ell,\R^m),\Q)$ lying in Hodge degree $s$ corresponds to the part of $\HH^*(\mathrm{C}(\ell,\R^m),\Q)$ lying in complexity $(\ell-i)$.

It follows from this, that
\begin{multline}
Z_{\overline{\HH}_*(\mathrm{C}(\bullet,\R^m),\Q)}(z,x;\,a_\ell,\,\ell\in\BN)=\\
Z_{\HH_*(\mathrm{C}(\bullet,\R^m),\Q)}(z\leftarrow 1/z;u\leftarrow 1/x;\, a_\ell\leftarrow (-1)^{(\ell-1)m}x^\ell z^{m\ell}a_\ell,\,\ell\in\BN).
\label{eq:change_of_var}
\end{multline}
Indeed, the sign $(-1)^{(\ell-1)m}$ arises because of the factor $(\sign_\ell)^{\otimes m}$ in~\eqref{eq:poincare}. The isomorphism~\eqref{eq:poincare} sends the complexity
$t$ to the Hodge degree $s=\ell-t$, which explains why $u$ is replaced by $1/x$, and also the presence of the factor $x^\ell$ in $a_\ell\leftarrow (-1)^{(\ell-1)m}x^\ell z^{m\ell}a_\ell$. The total homological degree $*$ of the left-hand side corresponds to $(m\ell-*)$ of the right-hand side in~\eqref{eq:poincare}: this explains why $z$ is replaced by $1/z$, and also the presence of the factor $z^{m\ell}$ in $a_\ell\leftarrow (-1)^{(\ell-1)m}x^\ell z^{m\ell}a_\ell$.

The equation~\eqref{eq:Sn_conf_spaces_comp} follows immediately from~\eqref{eq:change_of_var} and~\eqref{eq_Sn_conf_spaces}.
\end{proof}

\begin{proof}[Proof of Theorem~\ref{t:gen_funct}]
Define $\Psi_{mn}(x,u,z)$ as the following generating function:
\begin{multline*}
\Psi_{mn}(x,u,z)=\\
\sum_{s,t,k}\dim\left(\mathrm{hom}_{\Sigma_k}\left(\overline{\HH}_{s(m-1)+k}(\mathrm{C}(k,\BR^m);\Q),\hat\HH_{t(n-1)}(\mathrm{C}(k,\BR^n);\Q)\right)\right)
x^su^tz^{t(n-1)-s(m-1)-k}.
\end{multline*}
The variable $x$ is responsible for the Hodge degree; the variable $u$ is responsible for the complexity; and the variable $z$ for the total homological degree. %Notice that from the $x$, $u$, and $z$ degrees we can capture the number of points $k$, therefore we do not need an additional variable responsible for $k$.
It follows from~\eqref{eq:gr_dim}, and Propositions~\ref{p:Sn_conf_spaces_norm}-\ref{p:Sn_conf_spaces_comp} that
\begin{multline*}
\Psi_{mn}(x,u,z)=\\
\scriptstyle
\left.\left(\prod_{\ell=1}^{+\infty}\left(1+(-1/z)^{\ell}\partial/\partial a_\ell\right)^{(-1)^{m}E_\ell\left(\frac x{(-z)^{m-1}}\right)}
\prod_{\ell=1}^{+\infty}e^{-a_\ell}\left(1+(-1)^{n}\ell((-z)^{(n-1)}u)^\ell a_\ell\right)^{(-1)^{n}E_\ell\left(\frac 1{(-z)^{n-1}u}\right)}\right)\right|_{\substack{a_\ell=0\\ \ell\in\BN}}=\\
\scriptstyle
\prod_{\ell=1}^{+\infty}\left.\left(\left(1+(-1/z)^{\ell}\partial/\partial a_\ell \right)^{(-1)^{m}E_\ell\left(\frac x{(-z)^{m-1}}\right)}
e^{-a_\ell}\left(1+(-1)^{n}\ell((-z)^{(n-1)}u)^\ell a_\ell\right)^{(-1)^{n}E_\ell\left(\frac 1{(-z)^{n-1}u}\right)}\right)\right|_{a_\ell=0}.
\end{multline*}

Since $F_{mn}(x,u)=\Phi_{mn}(x,u,-1)$ we get
$$
F_{mn}(x,u)=
\prod_{\ell=1}^{+\infty}\left.\left(\left(1+\partial/\partial a \right)^{(-1)^{m}E_\ell(x)}
e^{-a}\left(1+(-1)^{n}\ell u^\ell a\right)^{(-1)^{n}E_\ell\left(\frac 1{u}\right)}\right)\right|_{a=0}.
$$
Notice that in the above formula we replaced $a_\ell$ by $a$. We could do so because each factor uses only one variable $a_\ell$ which is anyway taken to be zero.

The rest follows from the formula
\begin{multline*}
\left.\left(\left(1+\partial/\partial a \right)^{(-1)^{m}E_\ell(x)}
e^{-a}\left(1+(-1)^{n}\ell u^\ell a\right)^{(-1)^{n}E_\ell\left(\frac 1{u}\right)}\right)\right|_{a=0}=\\
=\frac{\Gamma((-1)^{n-1}E_\ell(\frac 1u)-(-1)^{m-1}E_\ell(x))}{\bigl( (-1)^{n-1} \ell u^\ell\bigr)^{(-1)^{m-1}E_\ell(x)}\Gamma((-1)^{n-1}E_\ell(\frac 1u))},
\end{multline*}
which is proved in~\cite[Proposition~15.7]{Turchin}.

\end{proof}

\subsection*{Acknowledgement} The authors are grateful to P.~Lambrechts and the Universit\'e Catholique de Louvain for hospitality. It was actually at the UCL where the idea for this paper first appeared.

%\newpage

\renewcommand{\thetable}{\arabic{table}}
\setcounter{table}{0}

\appendix

\section{Tables of Euler characteristics}

We present below results of computer calculations which were produced using Maple. These results appeared partially in~\cite{Turchin}. We add here the case of even~$m$. For completeness of presentation we keep the case of odd $m$ as well. To recall $\chi_{st}$ denotes the Euler characteristic of $\HH_*(\Ebarmn,\Q)$ in complexity $t$ and Hodge degree $s$. Let $\chi^\pi_{st}$ denote the Euler characteristic of an analogous component of $\Q\otimes\pi_*\Ebarmn$.

\begin{lemma}\label{l:euler_homot}
$$
F_{mn}(x,u)=\sum_{st}\chi_{st}x^su^t=\prod_{st}\frac 1{(1-x^su^t)^{\chi_{st}^\pi}}.
\eqno(\numb)\label{eq:euler_homot}
$$
\end{lemma}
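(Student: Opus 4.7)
The plan is to derive the formula directly from the structural observation, already recorded in Section~\ref{s:graphs} and in the discussion just before Lemma~\ref{l:euler_homot}, that the graph-complex $\calEH$ is by construction the free graded-commutative polynomial bialgebra on its subcomplex $\calEp$ of connected graphs. This identification is an identification of bigraded chain complexes, where the bigrading is by Hodge degree $s$ (number of external vertices) and complexity $t$ (the first Betti number of the graph obtained by gluing all external vertices); both invariants are additive under disjoint union, so the disjoint-union product respects the bigrading and the differential, which expands internal vertices component-wise, satisfies the Leibniz rule. Moreover, for $n\geq 2m+2$ the space $\calEH(s,t)$ is finite-dimensional in each bidegree $(s,t)$, so Euler characteristics are well defined and the generating functions below are honest formal power series in $x$ and $u$.

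First I would pass from Euler characteristics of chain complexes to Euler characteristics of their homology: since $\calEH(s,t)$ and $\calEp(s,t)$ are finite-dimensional, $\chi(\calEH(s,t))=\chi(\HH(\calEH(s,t)))=\chi_{st}$ and $\chi(\calEp(s,t))=\chi^\pi_{st}$ by Theorem~\ref{t:HE=RH}. Thus the lemma reduces to the chain-level identity
$$
\sum_{s,t}\chi(\calEH(s,t))\,x^s u^t=\prod_{s,t}(1-x^s u^t)^{-\chi(\calEp(s,t))}.
$$

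Next I would exploit the free polynomial bialgebra decomposition. Writing $\calEp=\bigoplus_{s,t,d}\calEp^d(s,t)$ with $d$ the homological degree, the free graded-commutative algebra $S(\calEp)\cong\calEH$ decomposes (up to natural isomorphism of bigraded complexes) as a tensor product of polynomial algebras on the even-degree generators and exterior algebras on the odd-degree ones. A single one-dimensional generator in bidegree $(s,t)$ and homological degree $d$ contributes to the $(x,u)$-weighted Euler characteristic the factor $(1-x^s u^t)^{-1}$ when $d$ is even (the polynomial series $\sum_{k\geq 0}(x^s u^t)^k$) and the factor $(1-x^s u^t)$ when $d$ is odd (the exterior expression $1+x^s u^t$ with sign $(-1)^d=-1$ at bidegree $(s,t)$). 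Both cases are uniformly $(1-x^s u^t)^{-(-1)^d}$. Multiplicativity of Euler characteristics under tensor products then yields
$$
\sum_{s,t}\chi(\calEH(s,t))\,x^s u^t=\prod_{s,t,d}(1-x^s u^t)^{-(-1)^d\dim\calEp^d(s,t)}=\prod_{s,t}(1-x^s u^t)^{-\chi(\calEp(s,t))},
$$
which is the desired formula.

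The main thing to check is the claim that the polynomial bialgebra structure on $\calEH$ is compatible with both the differential and the bigrading by $(s,t)$, but this is essentially built into the construction: disjoint union of graphs is additive in both $s$ and in the first Betti number of the collapsed graph, and the differential acts only on internal vertices of one connected component at a time. No genuine analytic obstacle arises, since the finite-dimensionality of $\calEH(s,t)$ and $\calEp(s,t)$ in the stable range $n\geq 2m+2$ makes every formal manipulation with generating functions legitimate.
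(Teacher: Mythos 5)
Your proof is correct and follows essentially the approach the paper itself invokes: the paper merely cites \cite[Lemma~16.1]{Turchin}, but the key ingredient is exactly the one flagged at the end of Subsection~\ref{ss:dbl_split} — that $\calEH$ is the free graded-commutative polynomial bialgebra on $\calEp$ — and you carry that through correctly, including the observation that the factor $(1-x^su^t)^{-(-1)^d}$ uniformly covers both the polynomial (even $d$) and exterior (odd $d$) cases. One small wording quibble: the tensor-product decomposition $\calEH\cong S(\calEp)$ is an isomorphism of bigraded \emph{vector spaces}, not of complexes (the differential does not split); this does not affect your argument, since Euler characteristics only see the underlying graded vector space, but the parenthetical "isomorphism of bigraded complexes" is an overstatement.
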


\begin{proof}
See \cite[Lemma~16.1]{Turchin}.
\end{proof}

This formula was used to fill Tables~\ref{table:pi_oo},~\ref{table:pi_oe},~\ref{table:pi_eo},~\ref{table:pi_ee}. The last column \lq\lq total" stays for the sum of absolute values of the Euler characteristics of the terms in a given complexity. This gives a lower bound estimation for the rank of the rational homotopy in a given complexity.

\begin{table}[h!]

{\tiny \begin{center}
% [inline block 0: 8 envs, 146301 chars -> data_tex | \begin{tabular}{|c|c|c|c|c|c|c|c|c|c|c|c|c|c|c|c|c|c|c|c|c|c|c|c|c|} \hline...]

\end{center}

\vspace{.4cm}
\caption{\small Table for Euler characteristics $\chi_{st}$ by complexity $t$ and Hodge degree $s$ of $\HH_*(\Ebarmn;\BQ)$ for  both $m$ and $n$ even.}
\label{table:H_ee}
\end{table}

\end{document}